\documentclass[reqno,11pt]{amsart}
\usepackage[foot]{amsaddr}
\usepackage{amsmath,mathrsfs,amsthm,amstext,amscd,url,amssymb,mathtools,faktor}
\usepackage{amsfonts}
\usepackage{enumerate}
\usepackage{comment}
\usepackage{braket}
\usepackage[export]{adjustbox}
\usepackage{wasysym}
\usepackage{acronym}
\usepackage{paralist}
\usepackage{xspace}
\usepackage{hyperref}
\usepackage{scrextend}
\usepackage{latexsym,textcomp,multirow}
\usepackage{graphicx}
\usepackage{textcomp}
\usepackage{empheq}
\usepackage{siunitx}
\usepackage{wrapfig}
\usepackage{chngpage}
\usepackage{graphics,subfigure}
\usepackage{tabularx}
\usepackage{booktabs}
\usepackage{epigraph}
\usepackage[inline]{enumitem}
\usepackage{appendix}
\usepackage{tikz}
\allowdisplaybreaks

\addtolength{\oddsidemargin}{-.975in}
\addtolength{\evensidemargin}{-.975in}
\addtolength{\textwidth}{1.8in}

\addtolength{\topmargin}{-.375in}
\addtolength{\textheight}{0.9in}

\newtheorem{theorem}{Theorem}
\newtheorem{lemma}{Lemma}

\newtheorem{proposition}{Proposition}
\newtheorem{definition}{Definition}
\newtheorem{remark}{Remark}


\usepackage{palatino}
\usepackage[T1]{fontenc}
\usepackage[mathscr]{eucal}



\usepackage{hyperref}

\numberwithin{equation}{section}  
\usepackage[sort&compress,capitalize,nameinlink]{cleveref}
\crefname{example}{Ex.}{Exs.}
\crefname{app}{Appendix}{Appendices}
\crefname{assumption}{Assumption}{Assumptions}

\crefrangeformat{equation}{\upshape(#3#1#4)\textendash(#5#2#6)}

\makeatletter
\newenvironment{proofoflemma}[1]{
	\pushQED
	{\qed}%
	\normalfont \topsep6\p@\@plus6\p@\relax
	\trivlist
	\item[\hskip\labelsep
	{\noindent{\textit{Proof of Lemma #1}}}. \hspace{0.1cm}]\ignorespaces
}{{\begin{flushright}$\qed$
	\end{flushright}}
}
\makeatother

\makeatletter

\makeatother

\usepackage[textwidth=30mm]{todonotes}

\usepackage{soul}
\setstcolor{red}
\sethlcolor{SkyBlue}







\newcommand{\E}{\ensuremath{\mathbb{E}}}

\newcommand{\N}{\ensuremath{\mathbb{N}}}
\newcommand{\Z}{\ensuremath{\mathbb{Z}}}

\newcommand{\R}{\ensuremath{\mathbb{R}}}

\newcommand{\bin}{\ensuremath{{\rm Bin}}}
\newcommand{\scale}{\vartheta}
\newcommand{\sscale}{\gamma^{(k)}}
\newcommand{\ssscale}{\bar{\gamma}^{(k)}}
\newcommand{\negbin}{\textrm{NegBin}}
\newcommand{\poisson}{\textrm{Poisson}}

\newcommand{\bx}{\mathbf{x}}
\newcommand{\by}{\mathbf{y}}

\newcommand{\bn}{\mathbf{n}}
\newcommand{\bm}{\mathbf{m}}

\newcommand{\bu}{\mathbf{u}}

\newcommand{\1}{\mathbf{1}}
\newcommand{\sy}{\textrm{sym}}
\renewcommand{\:}{\,\textrm{\normalfont:}\,}
\renewcommand{\Pr}{\mathbb{P}}
\renewcommand{\sp}{\textrm{\normalfont span}}
\newcommand{\oto}{\text{\normalfont one-to-one}}
\newcommand{\SEP}{\text{\normalfont SEP}}
\newcommand{\IRW}{\text{\normalfont IRW}}
\newcommand{\SIP}{\text{\normalfont SIP}}

\newcommand{\Scale}{\varTheta}
\newcommand{\dd}{\text{\normalfont d}}
\newcommand{\cpi}{\bar{\pi}}


\makeatletter
\newsavebox{\@brx}
\newcommand{\llangle}[1][]{\savebox{\@brx}{\(\m@th{#1\langle}\)}%
	\mathopen{\copy\@brx\kern-0.5\wd\@brx\usebox{\@brx}}}
\newcommand{\rrangle}[1][]{\savebox{\@brx}{\(\m@th{#1\rangle}\)}%
	\mathclose{\copy\@brx\kern-0.5\wd\@brx\usebox{\@brx}}}
\makeatother

\newmuskip\pFqmuskip
\newcommand*\pFq[6][8]{%
	\begingroup 
	\pFqmuskip=#1mu\relax
	\mathcode`\,=\string"8000
	\begingroup\lccode`\~=`\,
	\lowercase{\endgroup\let~}\pFqcomma
	{}_{#2}F_{#3}{\left[\genfrac..{0pt}{}{#4}{#5};#6\right]}%
	\endgroup
}
\newcommand{\pFqcomma}{\mskip\pFqmuskip}

\newcommand{\closure}[2][3]{%
	{}\mkern#1mu\overline{\mkern-#1mu#2}}


\makeatletter
\renewcommand{\email}[2][]{%
	\ifx\emails\@empty\relax\else{\g@addto@macro\emails{,\space}}\fi%
	\@ifnotempty{#1}{\g@addto@macro\emails{\textrm{(#1)}\space}}%
	\g@addto@macro\emails{#2}%
}
\makeatother


\title[Higher-order hydrodynamics and equilibrium fluctuations]{Higher-order hydrodynamics and equilibrium fluctuations
of interacting particle systems}
\author{Joe P.\ Chen}
\address[J.P.C.]{Department of Mathematics\\
	Colgate University\\
Hamilton, NY 13346\\
USA}
\email[J.P.C.]{jpchen@colgate.edu}
\author{Federico Sau}
\address[F.S.]{Institute of Science and Technology (IST Austria)
\\ Klosterneuburg, 3400\\
	  Austria}
  \email[F.S.]{federico.sau@ist.ac.at}
\begin{document}

\begin{abstract}
	Motivated by the recent preprint [\emph{arXiv:2004.08412}] by Ayala, Carinci, and Redig, we first provide a general framework for the study of scaling limits of higher-order fields. Then, by considering the same class of infinite interacting particle systems as in [\emph{arXiv:2004.08412}], namely symmetric simple exclusion and inclusion processes in the $d$-dimensional Euclidean lattice, we prove the hydrodynamic limit, and  convergence for the equilibrium fluctuations, of higher-order fields.  In particular, the limit fields exhibit a tensor structure. Our fluctuation result differs from that in [\emph{arXiv:2004.08412}], since we consider a different notion of higher-order fluctuation fields. 
\end{abstract}
	\maketitle
	{\footnotesize
	\smallskip \noindent
	\textbf{Keywords.} Interacting particle systems; Higher-order fields; Hydrodynamic limit; Equilibrium fluctuations; Duality.\\
\textbf{2020 Mathematics Subject Classification.} \subjclass{
60F05; 
60K35; 
82C22. 
}
}

	\section{Introduction}
 Within the theory of hydrodynamic limits for interacting particle systems (see e.g.\ the surveys \cite{de_masi_survey_1984, de_masi_mathematical_1991, kipnis_scaling_1999}), nearly all limit theorems, encompassing laws of large numbers as well as asymptotic analyses of fluctuations and large deviations,   concern the dynamical behavior of the empirical density of particles. In particular, for such empirical density fields  progress has been recently made in the understanding of 	 equilibrium and non-equilibrium fluctuations, including boundary dynamics or random environments, as well as considering more general geometries (see e.g.\ \cite{jara_non-equilibrium_2018, goncalves_hydrodynamics_2019, chen2019asymptotic, jara_quenched_2008, faggionato_cluster_2008, redig_symmetric_2020, landim2020hydrodynamic} and references therein).
 
 In this article, we take a  step towards higher-order fields, and study higher-order hydrodynamic limits and the corresponding equilibrium fluctuations for a class of infinite interacting particle systems. Higher-order fields---in contrast to  empirical density fields, which may be considered as first-order fields---refer to (random) empirical measures of higher  moments of the occupation variables of the particle systems; in this sense, empirical density fields are empirical measures of first moments of the occupation variables. We express	 these higher-order fields in terms of factorial moments of the occupation variables; this definition turns out to be  more  natural than using regular moments in the context of interacting particle systems.

The  motivation for the study of scaling limits of higher-order fields is twofold. On the one side, at the microscopic level,  a hierarchical structure in terms of higher-order fields provides an elegant decomposition of the Markovian particle dynamics. On the other side,  scaling limits of higher-order fields  represent a refinement of most typical hydrodynamic results (\cite{kipnis_scaling_1999}) displaying phenomena of {asymptotic independence}. Indeed, the discrete $k$th-order fields, roughly speaking, resemble $k$-fold tensor products of empirical density fields. While the particle interaction creates correlations destroying  this tensor structure, such an asymptotic independence---also  referred to in the literature  as   \emph{propagation of chaos}, although sometimes meaning slightly different concepts (see e.g.\ \cite{sznitman1991topics, kolokoltsov_nonlinear_2010, chen2017hydrodynamic} and references therein)---may or may not emerge at the scale of the hydrodynamic limit (a law of large numbers) or at the scale of the fluctuations (a central limit theorem).
In this paper we answer this question for a class of conservative symmetric interacting particle systems.

As interacting particle systems for which we prove our results, we  consider 	 infinite  systems of  exclusion (see e.g.\ \cite{liggett_interacting_2005-1}) and inclusion (see e.g.\ \cite{giardina_duality_2009})  particles, symmetrically evolving in $\Z^d$ with nearest-neighboring jumps. This last hypothesis is not crucial and it may be loosened to include symmetric finite-range jumps as e.g.\ in \cite{ayala2020higher}, but it simplifies	the notation. Along with these two examples, as a simplified instance of these interacting systems, we also consider a  system of independent particles. The main reason to study these particle systems first is the fact that, despite the particle interaction which breaks down the product structure  of the higher-order fields' evolution, the differential equations of suitably weighted factorial moments of the same order form a closed \emph{linear} system. In turn, such linearity comes as a result of the \emph{self-duality} property (\cite{liggett_interacting_2005-1}) of the  particle systems. 
The specific particle systems which we consider here are, among a larger class including e.g.\  symmetric (non-trivial) zero-range processes, the only conservative interacting particle systems with self-duality, see e.g.\ \cite[Theorem 2.1]{redig_factorized_2018}). Duality in this context asserts that, not only the expected number of particles at a location is related to the behavior of a single particle starting from that same location, but also higher-order factorial moments (if suitably normalized) at multiple locations may be expressed in terms of expectations of as many particles as the order of the moment considered. In fact,  the so-called \textquotedblleft dual\textquotedblright\ particles follow the same interaction rules of the original system, which is what gives the name \textquotedblleft self-duality\textquotedblright.   

As a consequence of the self-duality, the equations governing the evolution of the higher-order fields are linear and, moreover, no replacement lemmas (see e.g.\ \cite{kipnis_scaling_1999}) are required in order to close the corresponding equations. Nevertheless, for the purpose of deriving rigorous scaling limits,  we  first provide an expansion  of the objects determining the evolution of the fields of order $k \in \N$ in terms of higher-order fields of order at most $2k$, and, then,  reconstruct from there  the limiting evolution.   In this sense, we claim that the higher-order fields  and, in particular,  the decompositions that we present here offer a framework  which fits the study of more general \textquotedblleft non-linear\textquotedblright\ (i.e.\ for which self-duality does not hold) interacting particle systems, such as zero-range processes as well asymmetric models, for which higher-order replacement lemmas  present a main challenge.

Our main results on the $k$th-order fields can be summarized as follows.
For the hydrodynamic limit, the particle interaction vanishes on the macroscopic scale, and the corresponding hydrodynamic equation becomes the tensorization of $k$ copies of the same deterministic linear heat equation. 
This result is valid for any initial  distribution of particles which satisfies (a)    a $k$th-order weak law of large numbers and (b) a uniform bound for the factorial moments. In particular, we do not assume the initial distribution to be a local Gibbs state in product form. 
As for the equilibrium fluctuations,  we prove convergence, as $N$ goes to infinity, of the  fluctuations of order $N^{-d/2}$ to a $k$-dimensional Gaussian generalized Ornstein-Uhlenbeck process with tensorized drift and white noise with deterministic quadratic variation. Here, as in the case of  equilibrium fluctuations for the first-order empirical density  fields (\cite[\S 11]{kipnis_scaling_1999}), the particle interaction appears as mobility coefficient  in the limiting generalized Ornstein-Uhlenbeck equations.	

Our work takes inspiration from the  seminal papers \cite{assing_limit_2007, goncalves_quadratic_2019, ayala2020higher}, in which different  observables and limit theorems for higher-order fields of particle systems are considered.  More specifically, the authors of \cite{ayala2020higher}, motivated by the recent study on orthogonal polynomial dualities (see e.g.\ \cite{franceschini_stochastic_2017, redig_factorized_2018}), study the asymptotic behavior of   order $N^{-kd/2}$ equilibrium fluctuations for the $k$th-order fields, unveiling a recursive structure in the quadratic variation of the noise involving equilibrium fluctuations of order $N^{-kd/2+d/2}$. In particular, while for the case of $k=1$ corresponding to empirical density fields the two notions of equilibrium fluctuations---and, hence, the two results---coincide, they become two distinct objects  as soon as $k \geq 2$. As we explain in \cref{section:comparison} below, the difference between the orders of the fluctuations considered in this paper and in \cite{ayala2020higher} originates from the different  procedure employed to \textquotedblleft center\textquotedblright\ the higher-order fields; in particular, this consideration opens the possibility of studying fluctuations of intermediate orders. 

On the one hand, compared to the setting in \cite{assing_limit_2007, goncalves_quadratic_2019}, our higher-order fields correspond, roughly speaking, to  $k$-fold tensor products of first-order fields.
As already mentioned, the dynamics of the interacting particle  produces a  coupling effect between  the $k$ components of the $k$th-order fields. 
 On the other hand, the quadratic fields studied in \cite{assing_limit_2007, goncalves_quadratic_2019}---which work specifically with  simple exclusion processes---are defined, approximately, as squares of first-order fields.
Hence, from our second-order fields one recovers such quadratic fields  by considering only degenerate test functions suitably  approximating Dirac masses supported \textquotedblleft on the diagonal\textquotedblright. This correspondence naturally extends to higher-order fields and  powers thereof, and is facilitated through the choice of test functions supported on hyperplanes in the Euclidean space. In any case, due to the singularity of these test functions, none of the results we consider here straightforwardly translates to the context of \cite{assing_limit_2007, goncalves_quadratic_2019}, and \emph{vice versa}. 

The rest of the paper is organized as follows. 
In \cref{section:setting}, we set up the notation recurring throughout the paper; present definitions and properties of higher-order fields (\cref{section:fields}); and introduce the \textquotedblleft linear\textquotedblright\ interacting particle systems under consideration (\cref{section:infinite_particle_systems}). 
In \cref{section:main_results} we present our two main results on higher-order hydrodynamics (\cref{section:hydrodynamics}) and equilibrium fluctuations (\cref{section:fluctuations}). 
Their proofs  are detailed, respectively, in \cref{section:proof_hydrodynamics} and \cref{section:proof_fluctuations}.
Finally, \cref{appendix:infinite_particle_systems} and \cref{appendix:test_functions} contain some extra material integrating \cref{section:setting} on the construction of the infinite particle systems and a discussion on the space of test functions considered, respectively.

\section{Setting and notation}\label{section:setting}
In this section, we introduce, first,  the higher-order fields and, then,  the particle systems we consider in this paper. In both cases, we discuss and prove some of their basic properties. We emphasize that the content of \cref{section:fields} below is independent of the specific dynamics imposed on the particle system. Some of the notation we will employ all throughout the paper  is schematically presented below:  for $k, \ell \in \N$ with $\ell \leq k$, letting $\Z^d$ denote the $d$-dimensional Euclidean lattice and
\begin{equation*}
\bx=\ (x_1,\ldots, x_k) \in (\Z^d)^k\ ,\qquad
\by=\ (y_1,\ldots, y_\ell) \in (\Z^d)^\ell
\ ,
\end{equation*}
we define
\begin{align*}
[k]:=&\ \{1,\ldots, k\}\ ,\qquad [k]_0:= \{0,1,\ldots, k\}\\
\bx\:\by:=&\ (x_1,\ldots,x_k,y_1,\ldots, y_\ell ) \in (\Z^d)^{k+\ell}\\
\bx_i^{y}:=&\ (x_1,\ldots, x_{i-1},y,x_{i+1},\ldots, x_k)\\
\widehat\bx_{{\{i_1,\ldots,i_\ell\}}}:=&\ (x_1,\ldots, x_{i_1-1},x_{i_1+1},\ldots, x_{i_\ell-1},x_{i_\ell+1},\ldots, x_k)
\end{align*}
while, if $\Sigma_k$ stands for the set of permutations of $k$ indices and $\varsigma \in \Sigma_k$,
\begin{align*}
\varsigma \bx:=&\ (x_{\varsigma(1)},\ldots, x_{\varsigma(k)})\ .
\end{align*}
\subsection{Higher-order fields}\label{section:fields}
Let us start by introducing the \emph{particle system higher-order fields} or, shortly, the \emph{higher-order fields}, in terms of joint factorial moments of the particle configurations. For this purpose, let  $\N_0^{\Z^d}=\left\{0,1,\ldots \right\}^{\Z^d}$ denote the set of configurations with $\eta(x)$ indicating the number of particles at site $x \in \Z^d$ for the configuration $\eta \in \N_0^{\Z^d}$. Then,   for all $k \in \N$, $N \in \N$ and $\eta \in \N_0^{\Z^d}$, the $k$th-order field $\mathscr X^{(k)N}$ associated with $\eta \in \N_0^{\Z^d}$ is given by
\begin{equation}\label{eq:fields}
\mathscr X^{(k)N}:= \frac{1}{N^{kd}}\sum_{\bx \in (\Z^d)^k} \delta_{\frac{\bx}{N}}\, [\eta]_\bx = \frac{1}{N^{kd}} \sum_{x_1}\cdots \sum_{x_k} \delta_{\frac{x_1}{N}}\otimes \cdots\otimes  \delta_{\frac{x_k}{N}}\,  [\eta]_{(x_1,\ldots, x_k)}\ ,
\end{equation}
where $\delta_{\frac{\bx}{N}}$ denotes the Dirac measure on $\frac{\bx}{N} \in \frac{(\Z^d)^k}{N}$, while $[\eta]_\bx \geq 0$ stands for the following joint falling factorial of $\eta = \left\{\eta(x): x \in \Z^d\right\} \in \N_0^{\Z^d}$:
\begin{equation}\label{eq:x-factorial_moment}
[\eta]_\bx=[\eta]_{(x_1,\ldots,x_k)} := \eta(x_1)\left(\eta(x_2)-\1_{\{x_2=x_1\}} \right)\cdots \left(\eta(x_k)-\sum_{j=1}^{k-1} \1_{\{x_k=x_j\}} \right)\ .	
\end{equation}
Above and in what follows, when the range of the summands is not indicated (as on the r.h.s.\ of \eqref{eq:fields}) it is understood that the summations run	 over $\Z^d$; an analogous convention will hold for suprema (see e.g.\ \eqref{eq:fN1}--\eqref{eq:fN2} below). We note that, for the particular choice $k=1$, the field in \eqref{eq:fields} corresponds to the most standard \emph{empirical density field} (\cite{de_masi_mathematical_1991}) for the configuration $\eta \in \N_0^{\Z^d}$:
\begin{equation}\label{eq:first_order_field}
\mathscr X^{(1)N}:= \frac{1}{N^d} \sum_x \delta_{\frac{x}{N}}\, \eta(x)\ .
\end{equation}
\begin{remark}[\textsc{an equivalent definition}]
The higher-order field $\mathscr X^{(k)N}$ in \eqref{eq:fields}  also arises as  empirical measure of distinct $k$-tuples of particles from the configuration $\eta \in \N_0^{\Z^d}$ as in e.g.\ \cite[Remark 2.3.1]{sznitman1991topics}. More specifically, let us consider a (possibly infinite) configuration $\eta \in \N_0^{\Z^d}$ and let $\bx = (x_1,x_2,\ldots) \in (\Z^d)^{\mathcal I}$ denote a configuration of labeled particles (with labels $i \in \mathcal I\subseteq \N$) \textquotedblleft compatible\textquotedblright\ with $\eta \in \N_0^{\Z^d}$, i.e.
$	\eta(x)= \sum_{i \in \mathcal I} \1_{\{x_i=x\}}
	$
	for all $x \in \Z^d$. Then, 
	\begin{equation}\label{eq:fields_2ndversion}
	\mathscr X^{(k)N}= \frac{1}{N^{kd}} \sum_{\substack{\{i_1,\ldots, i_k\} \subseteq \mathcal I\\
	i_1,\ldots,i_k\ \text{\normalfont distinct}}} \delta_{\frac{x_{i_1}}{N}}\otimes \cdots \otimes  \delta_{\frac{x_{i_k}}{N}}\ .	
	\end{equation}
	\end{remark}

To rigorously define higher-order fields, let, for all $k \in \N$,  $\mathscr S^{(k)}:= \otimes_{i=1}^k \mathscr S(\R^d) = \mathscr S(\R^{kd})$ be the $k$-fold tensor product of the Schwartz space $\mathscr S(\R^d)$ of smooth and rapidly decreasing functions on $\R^d$, with $(\mathscr S^{(k)})'$ denoting its strong topological dual. We will employ the notation $\langle \cdot, \cdot \rangle$ for the dual pairing between elements in $\mathscr S^{(k)}$ and elements in $(\mathscr S^{(k)})'$: for all $G \in \mathscr S^{(k)}$ and $\mathscr X \in (\mathscr S^{(k)})'$, 
\begin{equation*}\langle G, \mathscr X\rangle= \mathscr X(G)\ .
\end{equation*} 

A sufficient condition to ensure that the higher-order fields take values in the space of tempered distributions is to restrict the set of particle configurations.
To this aim, we introduce the  set of   configurations growing at infinity at most polynomially, i.e.\ 
\begin{equation}\label{eq:configurations_polynomial_growth}
	 \bigcup_{m, n \in \N} \mathcal X_{m,n}:= \bigcup_{m,n \in \N}\left\{\eta \in \N_0^{\Z^d}: \eta(x)\leq m\left(1+|x|\right)^n\ \text{for all}\ x \in \Z^d \right\}\ .
\end{equation}
Then, the $k$th-order field $\mathscr X^{(k)N}$ associated with any of such configurations 	is clearly an element of $(\mathscr S^{(k)})'$.

Let us further observe that $[\eta]_\bx$ is invariant under permutation of the indices of $\bx=(x_1,\ldots, x_k) \in (\Z^d)^k$, i.e., for all $\varsigma \in \Sigma_k$ and $\bx \in (\Z^d)^k$,
\begin{equation}\label{eq:permutation_invariance}
[\eta]_{\varsigma \bx} = [\eta]_\bx\ .
\end{equation}
As a  consequence, for all $G \in \mathscr S^{(k)}$, we have
\begin{align*}
\langle G,\mathscr X^{(k)N}\rangle = \langle G^\sy, \mathscr X^{(k)N}\rangle\ ,
\end{align*}
where
\begin{equation}
G^\sy := \frac{1}{k!} \sum_{\varsigma \in \Sigma_k} G\circ\varsigma\ .
\end{equation}
Moreover, all definitions above trivially extend to the case $k =0$ by setting 
\begin{equation*}
(\Z^d)^0:= \{\emptyset\}\  ,\qquad \mathscr S^{(0)}:= \{G: \{\emptyset\}\to \R \}\ ,
\end{equation*}
and $\mathscr X^{(0)N}$ such that
\begin{align*}
\langle G, \mathscr X^{(0)N}\rangle= G(\emptyset)
\end{align*}
holds for all $\eta \in \N_0^{\Z^d}$,  $G \in \mathscr S^{(0)}$ and $N \in \N$.

\subsubsection{Products  of higher-order fields} 
The following basic formula provides an expansion of products of projections of higher-order fields or, shortly,  \textquotedblleft  products of higher-order fields\textquotedblright. This latter terminology is imprecise. Indeed,  we do not  consider products of elements $\mathscr X^{(k)N}$ and $\mathscr X^{(\ell)N}$ in $(\mathscr S^{(k)})'$ and $(\mathscr S^{(\ell)})'$, respectively, but rather  products between their	 projections $\langle G, \mathscr X^{(k)N}\rangle$ and $\langle H, \mathscr X^{(\ell)N}\rangle$,  for some $G \in \mathscr S^{(k)}$ and $H \in \mathscr S^{(\ell)}$.  In the remainder of this section,  $\eta \in \bigcup_{m,n\in \N} \mathcal X_{m,n} \subseteq \N_0^{\Z^d}$ and the associated higher-order fields are fixed. 	
\begin{lemma}[\textsc{products of higher-order fields}]\label{lemma:product_fields}
	Let $k, \ell \in \N$ with $\ell \leq k$ and $G \in \mathscr S^{(k)}$, $H \in \mathscr S^{(\ell)}$. Then, for all $N \in \N$, we have
	\begin{equation}
	\langle G, \mathscr X^{(k)N)}\rangle\, \langle H,\mathscr X^{(\ell)N}\rangle = \sum_{h=0}^\ell \frac{1}{N^{hd}} \langle \left\{G \otimes H\right\}^{(k+\ell-h)}, \mathscr X^{(k+\ell-h)N}\rangle\ ,\\
	\end{equation}
	where, for all $h \in [\ell]_0$,  $\left\{G \otimes H\right\}^{(k+\ell-h)} \in \mathscr S^{(k+\ell-h)}$   satisfies
	\begin{equation}\label{eq:summation_klh}
	\langle \left\{G\otimes H \right\}^{(k+\ell-h)},\mathscr X^{(k+\ell-h)N}\rangle=\frac{1}{N^{kd+\ell d-hd}}\sum_{\bx \in (\Z^d)^k}\sum_{\by \in (\Z^d)^\ell} G(\tfrac{\bx}{N}) H(\tfrac{\by}{N}) \left\{\eta|(\bx,\by)\right\}^{(k+\ell-h)}\ , 
	\end{equation}
	with, for given  $\bx \in (\Z^d)^k$ and $\by \in (\Z^d)^\ell$, 
	\begin{equation}\label{eq:definition_strange_etaxy}
	\left\{\eta|(\bx,\by) \right\}^{(k+\ell-h)}:= \sum_{\substack{\mathcal J\subseteq [\ell]\\
	|\mathcal J|=h}} [\eta]_{\bx\:\widehat \by_{{\mathcal J}}} \sum_{\substack{i : \mathcal J\to [k]\\\text{\normalfont one-to-one}}} \prod_{j \in \mathcal J}\1_{\{y_j = x_{i_j}\}} \ .
	\end{equation}
	\end{lemma}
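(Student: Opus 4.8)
The plan is to prove the identity at the level of the falling factorials themselves, pointwise in $\bx \in (\Z^d)^k$ and $\by \in (\Z^d)^\ell$, and only afterwards to integrate against the test functions. Concretely, I would first reduce the statement to the combinatorial identity
\begin{equation*}
[\eta]_\bx\,[\eta]_\by = \sum_{h=0}^\ell \left\{\eta|(\bx,\by)\right\}^{(k+\ell-h)}\ ,
\end{equation*}
valid for every $\eta \in \bigcup_{m,n} \mathcal X_{m,n}$. Granting this, the lemma follows immediately: expanding both projections via \eqref{eq:fields},
\begin{align*}
\langle G, \mathscr X^{(k)N}\rangle\,\langle H,\mathscr X^{(\ell)N}\rangle = \frac{1}{N^{(k+\ell)d}}\sum_{\bx}\sum_{\by} G(\tfrac{\bx}{N})\,H(\tfrac{\by}{N})\,[\eta]_\bx\,[\eta]_\by\ ,
\end{align*}
and substituting the combinatorial identity, the summand indexed by $h$ carries the prefactor $N^{-(k+\ell)d}$, which splits as $N^{-hd}\cdot N^{-(k+\ell-h)d}$; comparing with \eqref{eq:summation_klh} reproduces precisely $\sum_{h=0}^\ell N^{-hd}\langle\{G\otimes H\}^{(k+\ell-h)},\mathscr X^{(k+\ell-h)N}\rangle$. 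Since $G,H$ are Schwartz and $\eta$ grows at most polynomially, all the sums are absolutely convergent, so interchanging the finite sum over $h$ with the summations over $\bx,\by$ is harmless.

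To establish the combinatorial identity, I would use the labeled-particle representation recorded in \eqref{eq:fields_2ndversion}. Fixing a labeling $(x_i)_{i\in\mathcal I}$ compatible with $\eta$, the falling factorial $[\eta]_\bx$ equals the number of ordered $k$-tuples $(p_1,\ldots,p_k)$ of \emph{distinct} labels with $x_{p_t}=x_t$ for every $t\in[k]$: reading off \eqref{eq:x-factorial_moment} factor by factor, the $t$th factor $\eta(x_t)-\sum_{j<t}\1_{\{x_t=x_j\}}$ is exactly the number of labels available at site $x_t$ once $p_1,\ldots,p_{t-1}$ have been removed. Consequently $[\eta]_\bx\,[\eta]_\by$ counts pairs consisting of an ordered $k$-tuple of distinct labels sitting at $\bx$ and an ordered $\ell$-tuple of distinct labels sitting at $\by$, where the two tuples are allowed to share labels.

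The crux of the argument, and the step I expect to require the most care, is to partition this set of pairs according to which labels are shared and to match the resulting count with \eqref{eq:definition_strange_etaxy}. Given a pair $\left((p_t)_{t\in[k]},(q_s)_{s\in[\ell]}\right)$, I would set $\mathcal J := \{s\in[\ell] : q_s \in \{p_1,\ldots,p_k\}\}$, put $h:=|\mathcal J|$, and for $s\in\mathcal J$ let $i_s\in[k]$ be the unique slot with $p_{i_s}=q_s$; distinctness of the $p_t$ forces $s\mapsto i_s$ to be one-to-one, while $x_{i_s}=x_{q_s}=y_s$ holds automatically. Conversely, fixing $\mathcal J$ and a one-to-one map $i\colon\mathcal J\to[k]$ with $y_s=x_{i_s}$ for all $s\in\mathcal J$, I would check that the pairs inducing exactly this sharing pattern are in bijection with the ordered tuples of $k+\ell-h$ distinct labels occupying the concatenated site vector $\bx\:\widehat\by_{\mathcal J}$: the first $k$ entries recover $(p_t)$, the last $\ell-h$ entries recover the unshared part of $(q_s)$, and the shared entries $q_s=p_{i_s}$ for $s\in\mathcal J$ are then determined. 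The number of such tuples is $[\eta]_{\bx\:\widehat\by_{\mathcal J}}$, independent of $i$, so summing over all admissible $(\mathcal J,i)$ reconstructs $\left\{\eta|(\bx,\by)\right\}^{(k+\ell-h)}$ exactly as in \eqref{eq:definition_strange_etaxy}, and summing over $h\in[\ell]_0$ accounts for every pair once. The only subtlety to verify is that this assignment is a genuine bijection onto the pairs with that exact sharing pattern — in particular that no unshared label of one tuple can accidentally coincide with a label of the other — and this is guaranteed precisely because $[\eta]_{\bx\:\widehat\by_{\mathcal J}}$ enumerates tuples whose $k+\ell-h$ labels are all distinct.
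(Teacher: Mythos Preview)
Your argument is correct and genuinely different from the paper's. The paper proves the identity by an iterative algebraic decomposition: it writes $[\eta]_\bx[\eta]_\by$ as a sum over $\by$ and, for each $y_j$ in turn, uses
\[
\eta(y_j)=\Bigl(\eta(y_j)-\sum_{i}\1_{\{y_j=x_i\}}\Bigr)+\sum_i\1_{\{y_j=x_i\}}
\]
to split the expression into a ``new'' term (absorbing $y_j$ into the falling factorial over $\bx$) and ``matching'' terms (identifying $y_j$ with some $x_i$), and then iterates over $j=1,\ldots,\ell$. This produces a branching tree of terms whose leaves are exactly the summands indexed by $(\mathcal J,i)$.

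You instead reduce at once to the pointwise identity $[\eta]_\bx[\eta]_\by=\sum_h\{\eta|(\bx,\by)\}^{(k+\ell-h)}$ and prove it by a single bijection using the labeled-particle interpretation \eqref{eq:fields_2ndversion}: the product counts pairs of label tuples, and you partition these by the sharing data $(\mathcal J,i)$. Your approach is more conceptual---it explains \emph{why} the formula has its shape and makes the permutation invariance in \cref{proposition:properties_product} transparent---while the paper's iterative scheme is more mechanical but has the virtue of making the explicit form of $\{G\otimes H\}^{(k+\ell-h)}$ emerge from the recursion. Both reach the same destination; yours is shorter.
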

\begin{proof}
	In what follows,  anytime we have a denominator, the corresponding summation is meant to run only over the sites of $\Z^d$ for which the denominator is non-zero. Hence, 	
	\begin{align}\label{eq:product_fields}
	\nonumber
	&\langle G, \mathscr X^{(k)N)}\rangle\, \langle H,\mathscr X^{(\ell)N}\rangle\\
	\nonumber
	=&\ 	\frac{1}{N^{kd+\ell d}} \sum_{\bx \in (\Z^d)^k} \sum_{\by \in (\Z^d)^\ell} G(\tfrac{\bx}{N})\, H(\tfrac{\by}{N})\, [\eta]_\bx\, [\eta]_\by\\
	\nonumber
	=&\ \frac{1}{N^{kd+\ell d}} \sum_{x_1}\cdots \sum_{x_k} G(\tfrac{x_1}{N},\ldots, \tfrac{x_k}{N}) \sum_{y_1} [\eta]_{(x_1,\ldots,x_k)}\, \eta(y_1) \sum_{y_2}\cdots \sum_{y_\ell} H(\tfrac{y_1}{N},\ldots,\tfrac{y_\ell}{N})\, \frac{[\eta]_{(y_1,\ldots,y_\ell)}}{\eta(y_1)}\\
	\nonumber
	=&\ \frac{1}{N^{k d+\ell d}} \sum_{x_1}\cdots \sum_{x_k} \sum_{y_1} G(\tfrac{x_1}{N},\ldots, \tfrac{x_k}{N})\, [\eta]_{(x_1,\ldots,x_k,y_1)} \sum_{y_2}\cdots \sum_{y_\ell} H(\tfrac{y_1}{N},\ldots,\tfrac{y_\ell}{N})\, \frac{[\eta]_{(y_1,\ldots,y_\ell)}}{\eta(y_1)}\\
	+&\ \sum_{i_1=1}^k \frac{1}{N^{kd+\ell d}} \sum_{x_1} \cdots \sum_{x_k}  G(\tfrac{x_1}{N},\ldots, \tfrac{x_k}{N})\, [\eta]_{(x_1,\ldots,x_k)} \sum_{y_2} \cdots \sum_{y_\ell} H(\tfrac{x_{i_1}}{N},\tfrac{y_2}{N},\ldots,\tfrac{y_\ell}{N}) \frac{[\eta]_{(x_{i_1},y_2,\ldots,y_\ell)}}{\eta(x_{i_1})}\ ,
	\end{align}
	where the last identity is a consequence of
	\begin{align*}
	\eta(y_1) = \left(\eta(y_1) - \sum_{i_1=1}^k \1_{\{y_1=x_{i_1}\}} \right) + \sum_{i_1=1}^k \1_{\{y_1=x_{i_1}\}}
	\end{align*}
	and the definition  of $[\eta]_\bx$ in \eqref{eq:x-factorial_moment}. As a consequence of the following two identities
	\begin{align*}
	[\eta]_{(x_1,\ldots,x_k,y_1)}\,\frac{[\eta]_{(y_1,\ldots,y_\ell)}}{\eta(y_1)} =&\ [\eta]_{(x_1,\ldots,x_k,y_1,y_2)}\, \frac{[\eta]_{(y_1,\ldots,y_\ell)}}{\eta(y_1)\left(\eta(y_2)-\1_{\{y_2=y_1\}} \right)}\\
	+&\ \sum_{i_2=1}^k [\eta]_{(x_1,\ldots,x_k,y_1)}\, \frac{[\eta]_{(y_1,x_{i_2},y_3,\ldots,y_\ell)}}{\eta(y_1) \left(\eta(x_{i_2})-\1_{\{x_{i_2}=y_1\}} \right)}
	\end{align*}
	and, for all $i_1 \in \{1,\ldots,k\}$,
	\begin{align*}
	[\eta]_{(x_1,\ldots,x_k)}\, \frac{[\eta]_{(x_{i_1},y_2,\ldots,y_\ell)}}{\eta(y_{x_{i_1}})} =&\ [\eta]_{(x_1,\ldots,x_k,y_2)}\,  \frac{[\eta]_{(x_{i_1},y_2,\ldots,y_\ell)}}{\eta(y_{x_{i_1}})\left(\eta(y_2)-\1_{\{y_2=x_{i_1}\}} \right)}\\
	+&\ \sum_{\substack{i_2=1\\i_2 \neq i_1}}^k [\eta]_{(x_1,\ldots,x_k)}\, \frac{[\eta]_{(x_{i_1},x_{i_2},y_3,\ldots,y_\ell)}}{\eta(x_{i_1})\left(\eta(x_{i_2})-\1_{\{x_{i_2}=x_{i_1}\}} \right)}\ ,
	\end{align*}
	we further write the expression in \eqref{eq:product_fields} as follows:
	\begin{align*}
	&\langle G, \mathscr X^{(k)N)}\rangle\, \langle H,\mathscr X^{(\ell)N}\rangle\\
	=&\ \frac{1}{N^{k d+\ell d}} \sum_{x_1}\cdots \sum_{x_k}\sum_{y_1}\sum_{y_2} G(\tfrac{x_1}{N},\ldots,\tfrac{x_k}{N})\, [\eta]_{(x_1,\ldots,x_k,y_1,y_2)}\\
	&\qquad \times \sum_{y_3}\cdots \sum_{y_\ell} H(\tfrac{y_1}{N},\ldots,\tfrac{y_\ell}{N})\, \frac{[\eta]_{(y_1,\ldots,y_\ell)}}{\eta(y_1)\left(\eta(y_2)-\1_{\{y_2=y_2\}} \right)}\\
	+&\ \frac{1}{N^d} \sum_{i_1=1}^k \frac{1}{N^{k d+\ell d-d}} \sum_{x_1}\cdots \sum_{x_k} \sum_{y_2} G(\tfrac{x_1}{N},\ldots,\tfrac{x_k}{N})\, [\eta]_{(x_1,\ldots,x_k,y_2)}\\
	&\qquad\times \sum_{y_3}\cdots \sum_{y_\ell} H(\tfrac{x_{i_1}}{N},\tfrac{y_2}{N},\ldots,\tfrac{y_\ell}{N})\, \frac{[\eta]_{(x_{i_1},y_2,\ldots,y_\ell)}}{\eta(y_{x_{i_1}})\left(\eta(y_2)-\1_{\{y_2=x_{i_1}\}} \right)}\\
	+&\ \frac{1}{N^d}\sum_{i_2=1}^k \frac{1}{N^{k d+\ell d-d}} \sum_{x_1}\cdots \sum_{x_k}\sum_{y_1} G(\tfrac{x_1}{N},\ldots,\tfrac{x_k}{N})\, [\eta]_{(x_1,\ldots,x_k,y_1)}\\
	&\qquad \times \sum_{y_3}\cdots \sum_{y_\ell} H(\tfrac{y_1}{N},\tfrac{x_{i_2}}{N},\tfrac{y_3}{N},\ldots,\tfrac{y_\ell}{N})\, \frac{[\eta]_{(y_1,x_{i_2},y_3,\ldots,y_\ell)}}{\eta(y_1) \left(\eta(x_{i_2})-\1_{\{x_{i_2}=y_1\}} \right)}\\
	+&\ \frac{1}{N^{2d}}\sum_{i_1=1}^k \sum_{\substack{i_2=1\\i_2\neq i_1}}^k \frac{1}{N^{kd+\ell d-2d}} \sum_{x_1}\cdots \sum_{x_k} G(\tfrac{x_1}{N},\ldots,\tfrac{x_k}{N})\, [\eta]_{(x_1,\ldots,x_k)} \\
	&\qquad \times\sum_{y_3}\cdots \sum_{y_\ell} H(\tfrac{x_{i_1}}{N},\tfrac{x_{i_2}}{N},\tfrac{y_3}{N},\ldots,\tfrac{y_\ell}{N})\, \frac{[\eta]_{(x_{i_1},x_{i_2},y_3,\ldots,y_\ell)}}{\eta(x_{i_1})\left(\eta(x_{i_2})-\1_{\{x_{i_2}=x_{i_1}\}} \right)}\ .
	\end{align*} 
	We note that, by the same arguments used so far, each of the four terms in the r.h.s.\ above may be further split into two terms: one which keeps the same number of sums over $\Z^d$ and another one consisting of an additional sum over $i_3 \in \{1,\ldots,k\}$ which \textquotedblleft replaces\textquotedblright\ the sum	 $\sum_{y_3}$. 	By iterating for a finite number of steps	 this procedure to all such terms,  we get the final result.
\end{proof}
We remark that  $\{G\otimes H\}^{(k+\ell-h)}$ may be explicitly recovered by rearranging the sums in \eqref{eq:summation_klh} and using the definition in \eqref{eq:definition_strange_etaxy}; however, we will not need this explicit expression in what follows. Moreover, $\{G \otimes H\}^{(k+\ell-h)} \in \mathscr S^{(k+\ell-h)}$ because all Schwartz spaces are closed under pointwise multiplication.	

We end this section by stating some properties of   the functions $\left\{G \otimes H \right\}^{(k+\ell-h)}$ and $\left\{\eta|(\bx,\by)\right\}^{(k+\ell-h)}$, which will be invoked in the proof of Theorem \ref{theorem:hydrodynamics} below. We omit their proofs as they follow at once from the definitions given in the statement of  \cref{lemma:product_fields} and the permutation invariance   \eqref{eq:permutation_invariance}.

\begin{proposition}\label{proposition:properties_product}
	Let us keep the same notation as in the statement of  \cref{lemma:product_fields}. Then, 
\begin{enumerate}[label={\normalfont (\arabic*)},ref={\normalfont(\arabic*)}]
\item \label{it:tensor}	For $h = 0$, the function $\left\{G \otimes H \right\}^{(k+\ell-h)}$ coincides with the usual tensor product $\otimes: \mathscr S^{(k)}\times \mathscr S^{(\ell)}\to \mathscr S^{(k+\ell)}$, i.e., for all $\bu=(u_1,\ldots, u_k,u_{k+1},\ldots, u_{k+\ell}) \in (\R^d)^{k+\ell}$, 	
\begin{align*}
\left\{G \otimes H \right\}^{(k+\ell)}(u_1,\ldots, u_k,u_{k+1},\ldots, u_{k+\ell}) =&\ G(u_1,\ldots, u_k) H(u_{k+1},\ldots, u_{k+\ell})\\=&\  \left(G\otimes H \right)(u_1,\ldots, u_k,u_{k+1},\ldots, u_{k+\ell})\ .
\end{align*}
Analogously, 
\begin{equation*}
\{\eta|(\bx,\by)\}^{(k+\ell)}=[\eta]_{\bx:\by}\ .	
\end{equation*}
\item \label{it:permutation2} For all $h \in [\ell]_0$ and for all $\bx \in (\Z^d)^k$, the following function
\begin{equation*}
g_\bx(\by):=\left\{\eta|(\bx,\by) \right\}^{(k+\ell-h)}
\end{equation*}
is invariant under permutation of the indices of $\by \in (\Z^d)^\ell$, i.e., for all $\varsigma \in \Sigma_\ell$ and $\by \in (\Z^d)^\ell$, 
\begin{equation*}
g_\bx(\by)=g_\bx(\varsigma \by)\ .
\end{equation*}
\item \label{it:permutation1}  For all $h \in [\ell]_0$,  the following function
\begin{align*}
f(\bx):= \frac{1}{N^{kd-\ell d}}\sum_{\by \in (\Z^d)^\ell} H(\tfrac{\by}{N}) \left\{\eta|(\bx,\by) \right\}^{(k+\ell-h)}
\end{align*}
is invariant under permutation of the indices of $\bx   \in (\Z^d)^k$, i.e., for all $\varsigma \in \Sigma_k$ and $\bx \in (\Z^d)^k$, 
\begin{equation*}
f(\bx)=f(\varsigma \bx)\ .
\end{equation*}
\end{enumerate}
\end{proposition}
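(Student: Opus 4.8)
The plan is to verify each of the three assertions directly from the defining formula \eqref{eq:definition_strange_etaxy}, using nothing beyond the permutation invariance \eqref{eq:permutation_invariance} of $[\eta]_\bx$ together with elementary relabelings of the \emph{finite} sums over subsets $\mathcal J\subseteq[\ell]$ and over one-to-one maps $i\colon\mathcal J\to[k]$. There is no analytic content here: for fixed $\bx,\by$ every sum in \eqref{eq:definition_strange_etaxy} is finite, and the only infinite object, the configuration $[\eta]$, is never altered.

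For \ref{it:tensor} I would set $h=0$. The unique subset $\mathcal J\subseteq[\ell]$ with $|\mathcal J|=0$ is $\mathcal J=\emptyset$, for which $\widehat\by_\emptyset=\by$, the empty product equals $1$, and there is exactly one (empty) one-to-one map $\emptyset\to[k]$; hence \eqref{eq:definition_strange_etaxy} collapses to $\{\eta|(\bx,\by)\}^{(k+\ell)}=[\eta]_{\bx\:\by}$, which is the second claim. Substituting this into \eqref{eq:summation_klh} and re-indexing the double sum over $(\bx,\by)\in(\Z^d)^k\times(\Z^d)^\ell$ by the single variable $\bz=\bx\:\by\in(\Z^d)^{k+\ell}$, the right-hand side becomes $N^{-(k+\ell)d}\sum_{\bz}G(\tfrac{z_1}{N},\ldots,\tfrac{z_k}{N})H(\tfrac{z_{k+1}}{N},\ldots,\tfrac{z_{k+\ell}}{N})\,[\eta]_{\bz}$. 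This re-indexing exhibits the representing function explicitly as $(\bu)\mapsto G(u_1,\ldots,u_k)H(u_{k+1},\ldots,u_{k+\ell})$, i.e.\ $\{G\otimes H\}^{(k+\ell)}=G\otimes H$, matching the field definition \eqref{eq:fields} evaluated at $G\otimes H$.

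For \ref{it:permutation2}, fix $\bx$ and $\varsigma\in\Sigma_\ell$ and compare $\{\eta|(\bx,\varsigma\by)\}^{(k+\ell-h)}$ with $\{\eta|(\bx,\by)\}^{(k+\ell-h)}$. In the former I would change variables in the outer sum by $\mathcal J\mapsto\varsigma(\mathcal J)$, a bijection of the $h$-subsets of $[\ell]$, and in the inner sum by $i\mapsto i\circ\varsigma^{-1}$, a bijection of the one-to-one maps onto $[k]$. Since $(\varsigma\by)_j=y_{\varsigma(j)}$, the substitution $n=\varsigma(j)$ turns $\prod_{j\in\mathcal J}\1_{\{(\varsigma\by)_j=x_{i_j}\}}$ into $\prod_{n\in\varsigma(\mathcal J)}\1_{\{y_n=x_{i'_n}\}}$; moreover $\widehat{(\varsigma\by)}_{\mathcal J}$ and $\widehat\by_{\varsigma(\mathcal J)}$ carry the same multiset of entries, so \eqref{eq:permutation_invariance} gives $[\eta]_{\bx\:\widehat{(\varsigma\by)}_{\mathcal J}}=[\eta]_{\bx\:\widehat\by_{\varsigma(\mathcal J)}}$. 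After this relabeling the two expressions coincide term by term.

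For \ref{it:permutation1}, the cleanest route is to observe that $\{\eta|(\bx,\by)\}^{(k+\ell-h)}$ is already invariant under permutations of $\bx$, whence $f$ inherits the invariance after summing against $H(\tfrac{\by}{N})$. Indeed, for $\varsigma\in\Sigma_k$ the factor $[\eta]_{\bx\:\widehat\by_{\mathcal J}}$ is unchanged by \eqref{eq:permutation_invariance} (the permutation acts only within the first $k$ coordinates), while in the inner sum the substitution $i\mapsto\varsigma\circ i$ is a bijection of the one-to-one maps $\mathcal J\to[k]$ and sends $\prod_{j\in\mathcal J}\1_{\{y_j=x_{\varsigma(i_j)}\}}$ back to $\prod_{j\in\mathcal J}\1_{\{y_j=x_{i_j}\}}$; hence $f(\varsigma\bx)=f(\bx)$. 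The only point requiring care across all three parts is to confirm that each of $\mathcal J\mapsto\varsigma(\mathcal J)$, $i\mapsto i\circ\varsigma^{-1}$, and $i\mapsto\varsigma\circ i$ is genuinely a bijection on its index set, so that the finite sums are merely relabeled rather than changed; I do not expect any substantive obstacle.
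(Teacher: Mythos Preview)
Your proposal is correct and follows exactly the route the paper indicates: the paper omits the proof entirely, stating only that the claims ``follow at once from the definitions given in the statement of \cref{lemma:product_fields} and the permutation invariance \eqref{eq:permutation_invariance}'', which is precisely what you carry out in detail. One small remark: in \ref{it:permutation1} you in fact establish the stronger pointwise invariance of $\{\eta|(\bx,\by)\}^{(k+\ell-h)}$ under permutations of $\bx$ (not just after summing against $H$), which is true and slightly more than the proposition records.
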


\

\subsection{Interacting particle systems}\label{section:infinite_particle_systems}
Let us introduce three \textquotedblleft linear\textquotedblright\ interacting particle systems---the \emph{symmetric exclusion process} ($\SEP$), a system of \emph{independent random walkers} ($\IRW$), and the \emph{symmetric inclusion process} ($\SIP$)---for which we study scaling limits of the associated higher-order fields. \textquotedblleft Linearity\textquotedblright\ for these infinite particle systems corresponds to a notion of \textquotedblleft duality\textquotedblright\ which allows the description of the  evolution of suitable weighted factorial moments of the occupation variables in terms of a closed system of \emph{linear} evolution equations. Such a duality property will, in turn, yield linear lattice SPDEs (\cite{peszat_stochastic_2007}) for the corresponding higher-order fields.

For all $N \in \N$, the dynamics of the infinite particle systems we consider is described by the operator $\mathcal L^N$, whose action on local functions $f: \N_0^{\Z^d}\to \R$ reads as follows:
\begin{align}\label{eq:generator_infinite}
\mathcal L^N f(\eta):=  \sum_{x} \sum_{y} \1_{\{|x-y|=1\}}  N^2\left\{\begin{array}{r}\eta(x)\left(\alpha+\sigma \eta(y) \right)  \left(f(\eta^{x,y})-f(\eta) \right)\\[.15cm]
+\, \eta(y)\left(\alpha+\sigma \eta(x) \right)\left(f(\eta^{y,x})-f(\eta) \right)\end{array}\right\}\ ,
\end{align}
where   $\sigma \in \{-1,0,1\}$ and
\begin{align*}
\eta^{x,y}(z):=\begin{dcases}\eta(x)-1 &\text{if}\ z=x\ \text{and}\ \eta(x)>0\\
\eta(y)+1 &\text{if}\ z=y\\
\eta(z) &\text{otherwise}
\end{dcases}
\end{align*}
indicates the configuration obtained from $\eta \in \N_0^{\Z^d}$ by removing a particle from $x \in \Z^d$ (if any) and placing it at $y \in \Z^d$. We incorporate a factor $N^2$ on the r.h.s.\ in \eqref{eq:generator_infinite}, which, together with the space rescaling by $N^{-1}$ included in the definition of higher-order fields, yields a diffusive space-time rescaling of the microscopic particle system. We further note that for local functions $f: \N_0^{\Z^d}\to \R$, i.e.\ functions which depend on $\eta \in \N_0^{\Z^d}$ only through finitely many variables $\{\eta(x): x \in \Z^d\}$, the  r.h.s.\ in \eqref{eq:generator_infinite} reduces to a finite summation.
Moreover, we assume $\alpha \in \N$, while the  parameter $\sigma \in \{-1,0,1\}$ corresponds to three different types of particle interaction, namely  \emph{exclusion} ($\sigma = -1$), \emph{inclusion} ($\sigma =1$) and \emph{no} interaction ($\sigma = 0$). For notational convenience, we shall suppress the symbol $\sigma$ in what follows.

In order to ensure non-negative rates and non-explosiveness of the infinite particle systems, we  need to restrict the set of configurations $\eta \in \N_0^{\Z^d}$ from which we start the dynamics. We refer, for each choice of $\sigma \in \{-1,0,1\}$, to $\mathcal  X$ as the subset of \textquotedblleft admissible\textquotedblright\ particle configurations. When $\sigma = -1$, the exclusion  dynamics must be clearly restricted to the subset of configurations with at most $\alpha \in \N$ particles per site. For $\sigma \in \{0,1\}$, we show in \cref{appendix:infinite_particle_systems} below that the infinite particle system Markovian dynamics is well-defined for all times and is fully supported on the subset of configurations growing at most polynomially defined in \eqref{eq:configurations_polynomial_growth}. More precisely, if	
\begin{equation}\label{eq:set_admissible}
\mathcal X= \begin{cases}
\mathcal X_{\alpha,0} &\text{if}\ \sigma = -1\\
\bigcup_{m, n \in \N} \mathcal X_{m,n} &\text{if}\ \sigma \in \{0,1\}\ ,
\end{cases}	
\end{equation}
then the operator $\mathcal L^N$ given in \eqref{eq:generator_infinite} generates a  Markovian dynamics on the state space $\mathcal X$.

For all probability measures $\mu$ on $\N_0^{\Z^d}$ and local functions $f: \mathcal X\to \R$,  let $E_\mu\left[f(\eta)\right]$ denote the expectation of $f$ w.r.t.\ $\mu$.
We recall from e.g.\ \cite{carinci_dualities_2015} that, for each choice of $\sigma \in \{-1,0,1\}$, the infinite particle system with generator $\mathcal L^N$ admits a one-parameter family of reversible product measures
\begin{equation}\label{eq:reversible_product_measures}
\left\{\mu_\scale: \scale \in \Scale \right\} = \left\{\otimes_{x \in \Z^d}\, \nu_{x,\scale}: \scale \in \Scale \right\}
\end{equation}
with 
\begin{equation}\label{eq:site-marginal_measures}
\Scale:=\begin{dcases}
[0,1] &\text{if}\ \sigma = -1\\
[0,\infty) &\text{if}\  \sigma \in \{0,1\}\ ,
\end{dcases}
\qquad \text{and}\qquad
\nu_{x,\scale}:= \begin{dcases}
\bin(\alpha,\scale) &\text{if}\ \sigma = -1\\
\poisson(\alpha \scale) &\text{if}\ \sigma = 0\\
\negbin(\alpha,\scale) &\text{if}\ \sigma = 1\ ,
\end{dcases}
\end{equation}
where the parametrization above is such that, for all $\sigma \in \{-1,0,1\}$, $\scale \in \Scale$ and $x \in \Z^d$, 
\begin{align*}
E_{\mu_\scale}\left[\eta(x) \right] = \alpha \scale\quad \text{and}\quad E_{\mu_\scale}\left[\left(\eta(x)-\alpha \scale \right)^2\right]=\alpha \scale(1+\sigma \scale )\ .
\end{align*}
We remark that  $\mu_\scale$ is  fully supported on $\mathcal X$ for all $\scale \in \Scale$. Indeed, while this is clearly the case for $\sigma = -1$, for the case $\sigma \in \{0,1\}$, a standard Borel-Cantelli argument (cf.\ e.g.\  \cite[p.\ 15]{de_masi_mathematical_1991}) shows, more generally, that   any probability measure $\mu$ on $\N_0^{\Z^d}$ such that
\begin{align*}
\sup_{x \in \Z^d} E_\mu\left[\eta(x) \right]<\infty\ ,
\end{align*}
is fully supported on the subset of configurations growing at infinity at most polynomially.

\subsubsection{Dual processes and duality relations}\label{section:duality} In this section, we introduce the dual processes and duality relations for the infinite particle systems with generator $\mathcal L^N$ defined in \eqref{eq:generator_infinite} above. In particular, for all choices of $\sigma \in \{-1,0,1\}$, the dual process consists in a finite  particle system, in which the particles	 undergo the same interaction rules as their infinite analogues. More precisely, let $k \in \N$ and  consider the following operator $A^{(k)N}$ given, for all  functions $f: (\Z^d)^k\to \R$, as
\begin{equation}\label{eq:generatorA}
A^{(k)N}f(\bx):= \sum_{i=1}^k A^{(k)N}_i f(\bx) + \sigma \sum_{i=1}^k \sum_{j=1}^k B^{(k)N}_{i,j}f(\bx)\ ,
\end{equation}
where, for all $i, j \in \{1,\ldots, k\}$,
\begin{equation}\label{eq:generatorB}
A^{(k)N}_i f(\bx):=  \sum_{y_i} \1_{\{|y_i-x_i|=1\}} N^2 \alpha \left(f(\bx_i^{y_i})-f(\bx_i^{x_i}) \right)
\end{equation}
and
\begin{equation}\label{eq:generatorC}
 B^{(k)N}_{i,j}f(\bx) = \1_{\{|x_i-x_j|=1\}}N^2 \left(f(\bx_i^{x_j})-f(\bx_i^{x_i}) \right)\ .
\end{equation}
Although the operator $ A^{(k)N}$ is not, in general, a Markov generator because for the case $\sigma =-1$ the rates are not necessarily non-negative and because we have not specified the function space on which the operator acts, formally, 	the dynamics described by  $A^{(k)N}$ consists of a non-interacting part, corresponding to the operators $A^{(k)N}_i$, and an interacting one only if $\sigma  \neq 0$, corresponding to the operators $ B^{(k)N}_{i,j}$. In particular, while no restriction on the set of labeled particle configurations is required when $\sigma \in \{0, 1\}$, for the case $\sigma = -1$ we discard from  $(\Z^d)^k$ the subset $(\Z^d)^k_\ast$ given by
\begin{align*}
(\Z^d)^k_\ast:= \left\{\bx \in (\Z^d)^k: \sup_{x \in \Z^d}\sum_{i=1}^k \1_{\{x_i=x\}} > \alpha \right\}\ .
\end{align*}
As a standard detailed balance (see e.g.\ \cite{kipnis_scaling_1999}) computation shows,   for all $\sigma \in \{-1,0,1\}$, the operator $A^{(k)N}$ is self-adjoint in $\ell^2_\pi((\Z^d)^k)$, where  $\pi$ denotes the  (infinite) measure on $(\Z^d)^k$ given, for all $\bx \in (\Z^d)^k$,  by
\begin{align}\label{eq:pi_measure}
\pi(\bx):= \alpha \left(\alpha+\sigma \1_{\{x_2=x_1\}} \right)\cdots \left(\alpha+\sigma \sum_{j=1}^{k-1}\1_{\{x_k=x_j\}} \right)\ .
\end{align}
In particular, we observe that $\pi(\bx)=0$ if both $\sigma = -1$ and $\bx \in  (\Z^d)^k_\ast$, while $\pi(\bx) > 0$ otherwise. Moreover, for all $k \in \N$,
\begin{equation}\label{eq:upper_bound_pi}
\cpi^{(k)}:=\sup_{\bx \in (\Z^d)^k} \pi(\bx) 	 <\infty\ .
\end{equation} As an immediate consequence, 
 for all $f, g : (\Z^d)^k\to \R$ for which both sides below are finite, we have
\begin{align}\label{eq:self-adjoint}
\sum_{\bx \in (\Z^d)^k} g(\bx)\, A^{(k)N}f(\bx)\, \pi(\bx) = \sum_{\bx \in (\Z^d)^k} A^{(k)N}g(\bx)\, f(\bx)\, \pi(\bx)\ ,
\end{align}
and $A^{(k)N}$ is a bounded Markov generator as an operator  in 
$\ell^\infty_\pi((\Z^d)^k)$, giving rise, for all $\sigma \in \{-1,0,1\}$, to a well-defined countable state space Markov process. We let $\{\mathbf X^{\bx,N}_t: t \geq 0 \}$, resp.\ $\widehat \Pr^N_\bx$ and $\widehat \E^N_\bx$, denote such Markov process on $(\Z^d)^k$ ($(\Z^d)^k \setminus (\Z^d)^k_\ast$ if $\sigma = -1$) with generator $A^{(k)N}$ when started from $\bx \in (\Z^d)^k$ ($(\Z^d)^k\setminus (\Z^d)^k_\ast$ if $\sigma = -1$), resp.\ its probability law and corresponding expectation.

\

We recall from e.g.\ \cite{giardina_duality_2009} (see also \cite[\S VIII.1]{liggett_interacting_2005-1} and \cite[\S 6.3]{de_masi_mathematical_1991} for $\SEP$ ($\sigma = -1$) and \cite[\S 2.9.2]{de_masi_mathematical_1991} for  $\IRW$ ($\sigma =0$)) that, for all $\sigma \in \{-1,0,1\}$ and for all $k \in \N$, the  processes associated with the Markov generators $\mathcal L^N$ and $A^{(k)N}$ are \emph{dual}. More precisely, let us define the following function $D: (\Z^d)^k\times \mathcal X \to \R$, for all $\bx \in (\Z^d)^k$ and $\eta \in \mathcal X$, as
\begin{align*}
D(\bx,\eta):=\frac{[\eta]_\bx}{\pi(\bx)}\ .
\end{align*}
Then, the function $D(\bx,\eta)$ is a \emph{duality function} for the  processes associated with the Markov generators $\mathcal L^N$ and $A^{(k)N}$, i.e.\ the following duality relation
\begin{align}\label{eq:duality_relation}
A^{(k)N}D(\cdot,\eta)(\bx)\, \pi(\bx) = \mathcal L^N D(\bx,\cdot)(\eta)\, \pi(\bx)
\end{align}
holds for all $\bx\in (\Z^d)^k$ and $\eta \in \mathcal X$. We note that $D(\bx,\cdot):\mathcal X\to \R$ is a local function   for all $\bx \in (\Z^d)^k$, hence, both sides in \eqref{eq:duality_relation} reduce to finite summations; moreover, if $\sigma = -1$ and $\bx \in (\Z^d)^k_\ast$, 	 both sides equal zero. Note	 that the unlabeled version of the particle system $\{\mathbf X^{\bx,N}_t: t \geq 0\}$ is Markov with generator $\mathcal L^N$. For this reason,  the  duality relation \eqref{eq:duality_relation} between the labeled and the unlabeled versions of the same	particle system represents an instance of \emph{self-duality}.

\section{Main results}\label{section:main_results}
\subsection{Higher-order hydrodynamic limit}\label{section:hydrodynamics}
In this section we present our first main result concerning the hydrodynamic limit for higher-order fields of linear interacting particle systems. For this purpose, let us first introduce some notation.

For all choices of  $\sigma \in \{-1,0,1\}$, for all  $N \in \N$ and for all probability measures $\mu^N$ on $\mathcal X$,  $\Pr^N_{\mu^N}$ and $\E^N_{\mu^N}$ indicate the probability law and the corresponding expectation of the Markov process with state space $\mathcal X$ and with generator $\mathcal L^N$ given in \eqref{eq:generator_infinite} (see also \cref{appendix:infinite_particle_systems}). Let  
\begin{equation}\label{eq:Markov_particle_system}
\left\{\eta^N_t: t \geq 0 \right\}
\end{equation}
denote such Markov process. Then,  we introduce, for all $k \in \N$, the following $(\mathscr S^{(k)})'$-valued stochastic process
\begin{equation}\label{eq:fields_process}
\left\{\mathscr X^{(k)N}_t: t \geq 0 \right\}\ ,
\end{equation}
given, for all $t \geq 0$ and $G \in \mathscr S^{(k)}$, by	
\begin{align*}
\langle G, \mathscr X^{(k)N}_t\rangle= \frac{1}{N^{kd}}\sum_{\bx \in (\Z^d)^k} G(\tfrac{\bx}{N})\, [\eta^N_t]_\bx\ .	
\end{align*}
For notational convenience, we will not distinguish between the probability laws and expectations of the two processes in \eqref{eq:Markov_particle_system} and \eqref{eq:fields_process}.

Before presenting the statement of the main theorem of this section,  we need the following definition.

\begin{definition}[\textsc{weak law of large numbers at the initial time}]\label{definition:associated_profile}
	Let $\sigma \in \{-1,0,1\}$,  $k \in \N$ and a sequence of probability measures $\left\{\mu^N: N \in \N \right\}$ on $\mathcal X$ be given. We say  that a law of large numbers  at the initial time for $\left\{\mu^N: N \in \N \right\}$ with profile $ \ssscale \in (\mathscr S^{(k)})'$ holds if, for all $\delta > 0$ and $G \in \mathscr S^{(k)}$, we have
	\begin{align*}
	\Pr^N_{\mu^N}\left(\left|\langle G, \mathscr X^{(k)N}_0\rangle - \langle G,\ssscale \rangle\right|>\delta\right)\underset{N\to \infty}\longrightarrow 0\ .
	\end{align*}
\end{definition}

\begin{theorem}[\textsc{hydrodynamic limit}]\label{theorem:hydrodynamics}
	Let $k \in \N$ be fixed and,  for all $\sigma \in \{-1,0,1\}$, let $\left\{\mu^N: N \in \N \right\}$ be a sequence of probability measures on $\mathcal X$ satisfying the following two assumptions:
	\begin{enumerate}[label={\normalfont (\alph*)},ref={\normalfont (\alph*)}]
		\item \label{it:assumption_LLN}	A weak law of large numbers at the initial time for $\left\{\mu^N: N \in \N \right\}$ with profile $ \ssscale \in (\mathscr S^{(k)})'$ holds.
		\smallskip	
	\item \label{it:assumption_bound} There exists a constant $\scale \in \Scale$ such that, for all $\ell \in \N$, $\bx \in (\Z^d)^\ell$ and $N \in \N$, we have
		\begin{equation*}
		 E_{\mu^N}[[\eta]_\bx]\leq  \scale^\ell \pi(\bx)\ .
		\end{equation*}

	\end{enumerate}
Then the following convergence in law in the Skorokhod space of tempered distribution-valued trajectories $\mathcal D([0,\infty),(\mathscr S^{(k)})')$ (see e.g.\ \cite{kallianpur_xiong_1995})
\begin{equation}\label{eq:convergence_in_law}
\left\{\mathscr X^{(k)N}_t: t \geq 0 \right\}\underset{N\to \infty}\Longrightarrow \left\{ \sscale_t: t \geq 0 \right\}
\end{equation}
holds, where $\left\{ \sscale_t: t \geq 0 \right\}$ is the unique deterministic solution in $\mathcal C([0,\infty),(\mathscr S^{(k)})')$, the space of continuous $(\mathscr S^{(k)})'$-valued trajectories (see.\ e.g.\ \cite{kallianpur_xiong_1995}), of the following identity:
\begin{align}\label{eq:deterministic_heat_equation}
 \langle G,  \sscale_t\rangle = \langle G,  \ssscale\rangle + \int_0^t \langle \mathscr A^{(k)}G,  \sscale_s\rangle\, \dd s\ ,\qquad t\geq 0\ ,\quad G \in \mathscr S^{(k)}\ ,
\end{align}
where $\mathscr A^{(k)}:\mathscr S^{(k)}\to \mathscr S^{(k)}$ is the  linear, bounded operator given by
\begin{equation}\label{eq:k-laplacian}
\mathscr A^{(k)}G:= \sum_{i=1}^k \mathscr A^{(k)}_i G:=\sum_{i=1}^k \tfrac{\alpha}{2} \Delta^{(k)}_i G\ ,
\end{equation}
with $\Delta^{(k)}_iG$ denoting the Laplacian of $G \in \mathscr S^{(k)}$ w.r.t.\ the $i$th coordinate in $(\R^d)^k = \R^d \times \cdots \times \R^d$.
	\end{theorem}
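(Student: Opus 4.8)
The plan is to run the classical martingale (Dynkin) route to hydrodynamic limits (see e.g.\ \cite{kipnis_scaling_1999}), using self-duality to \emph{linearise} the evolution of the projected fields and thereby close the equations without any replacement lemma. Throughout I adopt the shorthand $\langle g, \mathscr X^{(k)N}_s\rangle := N^{-kd}\sum_{\bx} g(\bx)\,[\eta^N_s]_\bx$ for a lattice function $g\colon (\Z^d)^k \to \R$, which agrees with the field pairing when $g = G(\tfrac{\cdot}{N})$. A preliminary reduction I would record is that Assumption \ref{it:assumption_bound} propagates in time: writing $\phi^N_t(\bx) := \E_{\mu^N}[[\eta^N_t]_\bx]/\pi(\bx)$, the duality relation \eqref{eq:duality_relation} yields $\phi^N_t = e^{tA^{(k)N}}\phi^N_0$, and since $A^{(k)N}$ generates an $\ell^\infty_\pi$-contractive Markov semigroup one gets $\E_{\mu^N}[[\eta^N_t]_\bx] \le \scale^k\,\pi(\bx)$ for all $t \ge 0$, and likewise at every order $\ell$; this furnishes the uniform moment control used in all the estimates below.

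Fix $G \in \mathscr S^{(k)}$ and consider the Dynkin martingale
\begin{equation*}
M^{N,G}_t := \langle G, \mathscr X^{(k)N}_t\rangle - \langle G, \mathscr X^{(k)N}_0\rangle - \int_0^t \mathcal L^N \langle G, \mathscr X^{(k)N}_s\rangle\, \dd s .
\end{equation*}
The first and decisive step is to rewrite the drift. Using $[\eta]_\bx = \pi(\bx)\,D(\bx,\eta)$, the duality relation \eqref{eq:duality_relation}, and the self-adjointness \eqref{eq:self-adjoint} of $A^{(k)N}$ in $\ell^2_\pi$, I would transfer the generator from the configuration onto the test function,
\begin{equation*}
\mathcal L^N \langle G, \mathscr X^{(k)N}_s\rangle = \langle A^{(k)N}[G(\tfrac{\cdot}{N})], \mathscr X^{(k)N}_s\rangle ,
\end{equation*}
so that the drift is linear in the field. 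By \eqref{eq:generatorA} the operator splits as $A^{(k)N} = \sum_i A^{(k)N}_i + \sigma \sum_{i,j} B^{(k)N}_{i,j}$, and for the non-interacting part a Taylor expansion of $G$ shows that $A^{(k)N}_i[G(\tfrac{\cdot}{N})](\bx)$ converges, uniformly in $\bx$ with remainder controlled by the Schwartz seminorms of $G$, to $(\mathscr A^{(k)}_i G)(\tfrac{\bx}{N})$ as in \eqref{eq:k-laplacian}.

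The interacting part is where I expect the main obstacle. Each $B^{(k)N}_{i,j}[G(\tfrac{\cdot}{N})](\bx)$ is supported on the collision set $\{|x_i - x_j| = 1\}$ and, the jump being nearest-neighbour, the increment $G(\tfrac{\bx_i^{x_j}}{N}) - G(\tfrac{\bx}{N})$ is of order $N^{-1}$; the prefactor $N^2$ then leaves a contribution that is pointwise large but carried by a set of codimension $d$, i.e.\ of vanishing macroscopic volume. I would organise these collision contributions, through the field-product expansion of \cref{lemma:product_fields}, into higher-order field projections of order up to $2k$ carrying prefactors $N^{-hd}$ with $h \ge 1$, and then show that once tested against the field, bounded in expectation through the propagated Assumption \ref{it:assumption_bound}, and integrated in time, they vanish: summing the Schwartz envelope of $G$ over the collision set recovers one power $N^{-d}$ from the missing coordinate, and the cancellation between opposite neighbours $x_i \pm e_l$ supplies the extra power of $N$ needed to absorb the $N^2$, leaving an overall $o(1)$. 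For $\SEP$ and $\IRW$ the conclusion is in fact immediate, since for the permutation-symmetric observables $[\eta]_\bx$ (cf.\ \eqref{eq:permutation_invariance} and \cref{proposition:properties_product}) the dual particles evolve as independent walkers and the interaction is invisible; it is only the attractive $\SIP$ case that genuinely requires the near-diagonal estimate, made uniform in $N$.

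It then remains to discard the martingale and to upgrade to a process-level statement. For the former I would bound the predictable quadratic variation $\langle M^{N,G}\rangle_t = \int_0^t \Gamma^N(\langle G,\mathscr X^{(k)N}\rangle)(\eta^N_s)\,\dd s$ by the carr\'e du champ of $\mathcal L^N$: a single nearest-neighbour jump changes the projected field by a quantity whose square, multiplied by the $O(N^2)$ jump rate, summed over the active bonds in the support of $G$, and controlled by Assumption \ref{it:assumption_bound}, is $o(1)$, thanks to the $N^{-1}$ gained from the smoothness of $G$ across a bond. For tightness I would appeal to Mitoma's criterion \cite{kallianpur_xiong_1995}, reducing tightness of $\{\mathscr X^{(k)N}_\cdot\}$ in $\mathcal D([0,\infty),(\mathscr S^{(k)})')$ to tightness of each real process $\{\langle G,\mathscr X^{(k)N}_\cdot\rangle\}$, which follows from the Aldous--Rebolledo criterion using the uniform drift and quadratic-variation bounds just obtained. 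Finally, any limit point is a continuous $(\mathscr S^{(k)})'$-valued trajectory solving the closed linear equation \eqref{eq:deterministic_heat_equation} with initial datum $\ssscale$ (the latter from Assumption \ref{it:assumption_LLN} and the convergence of $\mathscr X^{(k)N}_0$); since $\mathscr A^{(k)}$ generates the tensorised heat semigroup $e^{t\mathscr A^{(k)}}$ on $\mathscr S^{(k)}$, the solution $\langle G, \sscale_t\rangle = \langle e^{t\mathscr A^{(k)}}G, \ssscale\rangle$ is unique by a Gronwall argument on the difference of two solutions tested against $G$. The limit being deterministic and unique, the whole sequence converges, which is \eqref{eq:convergence_in_law}.
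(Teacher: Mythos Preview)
Your overall architecture---Dynkin decomposition, transferring $\mathcal L^N$ onto the test function via self-duality and \eqref{eq:self-adjoint}, then showing drift convergence plus vanishing martingale---is exactly the paper's route. Two points, however, are not right.

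First, the claim that for $\SEP$ ``the dual particles evolve as independent walkers and the interaction is invisible'' is false in this setting: for $\sigma=-1$ the dual generator $A^{(k)N}$ in \eqref{eq:generatorA} carries the interaction term $-\sum_{i,j}B^{(k)N}_{i,j}$, and the dual is the exclusion process itself, not $\IRW$. The near-diagonal estimate is needed for both $\sigma=-1$ and $\sigma=1$; only $\sigma=0$ is trivial. Second, and more importantly, your mechanism for killing $\langle B^{(k)N}_{i,j}G,\mathscr X^{(k)N}\rangle$ does not work as described. \cref{lemma:product_fields} expands \emph{products} of field projections; it does nothing to a single $k$th-order projection, so invoking it to decompose the collision drift itself is a non-sequitur (it is used, in the paper, only on the \emph{square} when bounding second moments). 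More seriously, your ``cancellation between opposite neighbours $x_i\pm e_l$'' does not exist: in $B^{(k)N}_{i,j}G(\tfrac{\bx}{N})$ both $x_i$ and $x_j$ are fixed coordinates of $\bx$, so there is no free sum over neighbours to cancel. The actual cancellation the paper exploits is the permutation invariance $[\eta]_{\bx}=[\eta]_{(ij)\bx}$ of \eqref{eq:permutation_invariance}: since the field is tested against an $(ij)$-symmetric weight, one may symmetrise in $(i,j)$ and obtain (\cref{proposition:permutationB})
\[
\langle B^{(k)N}_{i,j}G,\mathscr X^{(k)N}\rangle=\tfrac{1}{2}\langle -\nabla^{(k)N}_{i,j}\nabla^{(k)N}_{j,i}G,\mathscr X^{(k)N}\rangle\ ,
\]
a \emph{double} discrete gradient that is $O(1)$ pointwise on $\{|x_i-x_j|=1\}$, so the codimension-$d$ collision constraint alone yields $O(N^{-d})$ in expectation. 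Without this step your accounting gives only $O(N^{1-d})$, which fails for $d=1$. A minor further point: the paper carries out all estimates for product test functions $G=\otimes_i g_i$ (this product form is used essentially in \cref{proposition:permutationB} and in the carr\'e-du-champ bounds of \cref{lemma:vanishing_martingale}) and only afterwards extends to general $G\in\mathscr S^{(k)}$ via the Hermite basis and a negative Sobolev-norm argument; your sketch treats general $G$ throughout, which would force you to redo those estimates without the factorisation.
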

The proof of \cref{theorem:hydrodynamics} is given in \cref{section:proof_hydrodynamics} below.
Let us collect now some immediate observations on the assumptions \ref{it:assumption_LLN} and \ref{it:assumption_bound}  in  \cref{theorem:hydrodynamics}:
	\begin{enumerate}[label={\normalfont (\roman*)}]
		\item Assumption \ref{it:assumption_bound} is redundant for the case $\sigma = -1$ because of the a.s.\ uniform bound on the maximal number of particles per site. Indeed, the choice $\scale = 1$ would suffice. Nonetheless, for the sake of notational convenience, we decide to state this condition for all choices of $\sigma \in \{-1,0,1\}$.
		\smallskip
		\item For $\sigma \in \{0,1\}$,  assumption \ref{it:assumption_bound} implies, in particular, that all measures $\left\{\mu^N: N\in \N\right\}$ are fully supported on the subset of admissible configurations $\mathcal X$ (cf.\ \cref{section:infinite_particle_systems}).
		\smallskip
		\item   Let us recall the definitions \eqref{eq:reversible_product_measures} and \eqref{eq:site-marginal_measures}. Then, for all $\sigma \in \{-1,0,1\}$, the product measures with slowly varying profile (\cite{kipnis_scaling_1999})
		\begin{equation}\label{eq:local_Gibbs}
		\left\{\mu^N: N \in \N \right\}=\left\{\otimes_{x \in \Z^d}\, \nu_{x,\scale(\frac{x}{N})}: N \in \N \right\}	
		\end{equation}
		associated with the bounded function $\scale: \R^d \to \Scale$ satisfy assumption \ref{it:assumption_bound} in view of the following well-known formula on the factorial moments of Binomial, Poisson and Negative-Binomial distributions: for all $\ell \in \N$, $\bx \in (\Z^d)^\ell$ and $N \in \N$,  
		\begin{align*}
		E_{\mu^N}\left[[\eta]_{\bx}\right] = \left(\prod_{i=1}^k \scale(\tfrac{x_i}{N})\right)\pi(\bx) \ .
		\end{align*}
		 If, additionally, the function $\scale: \R^d\to \Scale$ is piecewise continuous, then \cref{theorem:hydrodynamics} holds: the corresponding product measures in \eqref{eq:local_Gibbs} also satisfy assumption \ref{it:assumption_LLN} with profile $  \ssscale(\dd u) = \otimes_{i=1}^k\, \alpha\, \scale(u)\, \dd u$ absolutely continuous w.r.t.\  the $(d\times k)$-dimensional Lebesgue measure and such that, for all $G = \otimes_{i=1}^k\, g_i\in \mathscr S^{(k)}$, 
		\begin{equation*}
		\langle G,\otimes_{i=1}^k\left(\alpha\, \scale(\cdot)\, \dd u\right)\rangle = \prod_{i=1}^k\left\{\int_{\R^d} g_i(u)\, \alpha \scale(u)\, \dd u \right\} \ .	
		\end{equation*}
		As a particular instance, taking $\scale: \R^d\to   \Scale$ constant, from \eqref{eq:local_Gibbs} we recover the reversible product measure $\mu_\scale$ and, as deterministic limit in \eqref{eq:convergence_in_law}, we obtain the stationary solution of \eqref{eq:deterministic_heat_equation} given, for all $G = \otimes_{i=1}^k\, g_i \in \mathscr S^{(k)}$, by
		\begin{align*}
	\langle G, \sscale_t\rangle = \prod_{i=1}^k \left\{\int_{\R^d} g_i(u)\, \alpha \scale\, \dd u \right\}
		\end{align*}
		for all $t \geq 0$.
		\smallskip
		\item By keeping the same notation as in \cref{section:duality}, because of duality \eqref{eq:duality_relation} and Tonelli's theorem, the upper bound in assumption \ref{it:assumption_bound} holds at any later time $t > 0$: for all $\sigma \in \{-1,0,1\}$, $\ell \in \N$ and $\bx \in (\Z^d)^\ell$,
		\begin{align}\label{eq:assumption_bound_timet}
		\E^N_{\mu^N}\left[[\eta^N_t]_\bx \right]=&\	\E^N_{\mu^N}\left[\frac{[\eta^N_t]_\bx}{\pi(\bx)} \right] \pi(\bx)	= \widehat \E^N_\bx\left[\frac{E_{\mu^N}\left[[\eta]_{\mathbf X^{\bx,N}_t}\right]}{\pi(\mathbf X^{\bx,N}_t)} \right] \pi(\bx)
		\leq  \scale^\ell \pi(\bx)\leq \scale^\ell \cpi^{(\ell)}\ .
		\end{align}
	\end{enumerate}

\subsection{Higher-order equilibrium fluctuations}\label{section:fluctuations}
Let us present our second main result concerning  equilibrium fluctuations for higher-order fields around their hydrodynamic limit. Let us  recall from \eqref{eq:reversible_product_measures} the definition of the reversible product measures $\{\mu_\scale: \scale \in \Scale\}$   for the interacting particle systems with generator $\mathcal L^N$ given in \eqref{eq:generator_infinite}. Then, for all  $k \in \N$, $N \in \N$ and $\scale \in \Scale$, we introduce 
\begin{equation}\label{eq:fluctuation_fields}
\left\{\mathscr Y^{(k,\scale)N}_t: t \geq 0 \right\} \in \mathcal D([0,\infty),(\mathscr S^{(k)})')
\end{equation}
as the field given in terms of  $\{\mathscr X^{(k)N}_t: t \geq 0\}$, for all $G \in \mathscr S^{(k)}$ and $t \geq 0$, by
\begin{equation}\label{eq:fluctuation_fields2}
\langle G, \mathscr Y^{(k,\scale)N}_t\rangle = N^{d/2} \left(\langle G, \mathscr X^{(k)N}_t\rangle- \E^N_{\mu_\scale}\left[\langle G, \mathscr X^{(k)N}_t\rangle \right] \right)\ .	
\end{equation}
We call these fields the \emph{$k$th-order fluctuation fields} associated with $\scale \in \Scale$.	Let us keep the same notation as in \cref{theorem:hydrodynamics} and observe that, for $\{\mu^N: N \in \N\}= \mu_\scale$ and for all $k \in \N$, \cref{theorem:hydrodynamics} applies, yielding a (weak) law of large numbers for the higher-order fields $\{\mathscr X^{(k)N}_\cdot: N \in \N \}$. In the following theorem, whose proof is postponed to  \cref{section:proof_fluctuations} below, we characterize the fluctuations of such fields by studying the limiting evolution of the stochastic processes given  in \eqref{eq:fluctuation_fields}--\eqref{eq:fluctuation_fields2}.

\begin{theorem}[\textsc{equilibrium fluctuations}]\label{theorem:fluctuations}
	For each choice of $\sigma \in \{-1,0,1\}$, for all $k \in \N$ and $\scale \in \Scale$, we have the following convergence in distribution in $\mathcal D([0,\infty),(\mathscr S^{(k)})')$	
	\begin{equation}\label{eq:convergence_in_law2}
	\left\{\mathscr Y^{(k,\scale)N}_t: t \geq 0 \right\}\underset{N\to \infty}\Longrightarrow \left\{\mathscr Y^{(k,\scale)}_t: t \geq 0 \right\}\ ,
	\end{equation} where $\{\mathscr Y^{(k,\scale)}_t: t \geq 0\} \in \mathcal C([0,\infty),(\mathscr S^{(k)})')$ denotes the  stationary Gaussian process  with distribution $\mathscr P^{(k,\scale)}$ and corresponding expectation $\mathscr E^{(k,\scale)}$, and is uniquely characterized by the following properties:
	\begin{itemize}
		\item The distribution of $\mathscr Y^{(k,\scale)}_0 \in (\mathscr S^{(k)})'$ is centered Gaussian with covariances given, for all product test functions $G= \otimes_{i=1}^k\, g_i$ and $H=\otimes_{i=1}^k\, h_i \in \mathscr S^{(k)}$, by
		\begin{align}
		\label{eq:covariance}
		&\ \mathscr E^{(k,\scale)}\left[\langle G, \mathscr Y^{(k,\scale)}_0\rangle \langle H, \mathscr Y^{(k,\scale)}_0\rangle \right]\\
			\nonumber
		=&\
		\sum_{i=1}^k \sum_{j=1}^k \left\{\int_{\R^d}g_i(u) h_j(u)\, \alpha \scale (1+\sigma \scale)\, \dd u \right\} \prod_{\substack{l=1\\l\neq i}}^k \left\{\int_{\R^d} g_l(u)\, \alpha\scale\, \dd u  \right\} \prod_{\substack{l'=1\\l'\neq j}}^k\left\{\int_{\R^d} h_{l'}(u)\, \alpha\scale\, \dd u  \right\}\ .
		\end{align}
		\item For all $G \in \mathscr S^{(k)}$, both continuous stochastic processes $\{\langle G, \mathscr M^{(k,\scale)}_t\rangle: t \geq 0\}$ and $\{\mathscr N^{(k,\scale)}_t(G): t \geq 0\}$ defined, respectively, for all $t \geq 0$, as 	
		\begin{equation}\label{eq:martingale_problem1}
		\langle G, \mathscr M^{(k,\scale)}_t\rangle:= \langle G, \mathscr Y^{(k,\scale)}_t\rangle -\langle G, \mathscr Y^{(k,\scale)}_0\rangle - \int_0^t \langle \mathscr A^{(k)}G, \mathscr Y^{(k,\scale)}_s\rangle\, \dd s
		\end{equation}
		and
		\begin{align}\label{eq:martingale_problem2}
		\mathscr N^{(k,\scale)}_t(G):=  \left(\langle G, \mathscr M^{(k,\scale)}_t\rangle \right)^2 -t\, \mathscr U^{(k,\scale)}(G)
		\end{align}
		are  integrable $\mathscr P^{(k,\scale)}$-martingales, where $\mathscr U^{(k,\scale)}(G)$ is 	deterministic, non-negative and given by
		\begin{equation}\label{eq:UK}
		\mathscr U^{(k,\scale)}(G)=\sum_{i=1}^k\sum_{j=1}^k \mathscr U^{(k,\scale)}_{\{i,j\}}(G)\  ,
		\end{equation}
		with, for $G = \otimes_{i=1}^k\, g_i \in \mathscr S^{(k)}$,
		\begin{align}\label{eq:UKij}
		\nonumber
		\mathscr U^{(k,\scale)}_{\{i,j\}}(G):=&\ \left\{\int_{\R^d}  \nabla g_i(u)\cdot  \nabla g_j(u)\, \alpha^2\scale (1+\sigma \scale)\, \dd u\right\}\\
		&\ \times
		 \prod_{\substack{l=1\\l\neq i}}^k \left\{\int_{\R^d} g_l(u)\, \alpha\scale\, \dd u  \right\} \prod_{\substack{l'=1\\l'\neq j}}^k\left\{\int_{\R^d} g_{l'}(u)\, \alpha\scale\, \dd u  \right\} \ .
		\end{align}
	\end{itemize} 	
\end{theorem}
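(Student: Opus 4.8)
The plan is to follow the classical three-step route for equilibrium fluctuations of conservative systems (see \cite[\S11]{kipnis_scaling_1999}): prove tightness of the sequence $\{\mathscr Y^{(k,\scale)N}_\cdot : N \in \N\}$, characterize every limit point through a martingale problem, and invoke uniqueness of the associated generalized Ornstein--Uhlenbeck process. Because all quantities are linear in the factorial moments, self-duality lets me carry out the analysis \emph{exactly}, with no Boltzmann--Gibbs replacement. The starting point is Dynkin's formula applied to $F(\eta)=\langle G,\mathscr X^{(k)N}\rangle$: for each $G\in\mathscr S^{(k)}$,
\begin{equation*}
\langle G,\mathscr Y^{(k,\scale)N}_t\rangle = \langle G,\mathscr Y^{(k,\scale)N}_0\rangle + \int_0^t \langle \widehat A^{(k)N}G,\mathscr Y^{(k,\scale)N}_s\rangle\,\dd s + \langle G,\mathscr M^{(k,\scale)N}_t\rangle\ ,
\end{equation*}
where $\widehat A^{(k)N}G(\tfrac{\bx}{N}):=A^{(k)N}[G(\tfrac{\cdot}{N})](\bx)$ and $\langle G,\mathscr M^{(k,\scale)N}_t\rangle$ is an $L^2$-martingale. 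The identity $\mathcal L^N\langle G,\mathscr X^{(k)N}\rangle=\langle \widehat A^{(k)N}G,\mathscr X^{(k)N}\rangle$ is where I would use the duality relation \eqref{eq:duality_relation} together with the self-adjointness \eqref{eq:self-adjoint} of $A^{(k)N}$ in $\ell^2_\pi$, transferring the generator from $\eta$ onto $G$; stationarity of $\mu_\scale$ makes the drift already centered, so no extra correction is needed.

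For the drift I would show $\langle \widehat A^{(k)N}G-\mathscr A^{(k)}G,\mathscr Y^{(k,\scale)N}_s\rangle\to 0$. Splitting $A^{(k)N}=\sum_i A^{(k)N}_i+\sigma\sum_{i,j}B^{(k)N}_{i,j}$ as in \eqref{eq:generatorA}, the noninteracting part is a rescaled nearest-neighbour Laplacian in each coordinate and converges, for smooth $G$, to $\mathscr A^{(k)}G$ of \eqref{eq:k-laplacian}; the interacting part $\sigma\sum_{i,j}B^{(k)N}_{i,j}G$, being supported on the collision set $\{|x_i-x_j|=1\}$, is negligible once paired with the fluctuation field, a fact I would establish in $L^2(\Pr^N_{\mu_\scale})$ using the product expansion of \cref{lemma:product_fields} and the equilibrium factorial-moment bound \eqref{eq:assumption_bound_timet}. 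This is the same mechanism that already kills the interaction in \cref{theorem:hydrodynamics}. In parallel I would prove the initial CLT: since $\mu_\scale$ is a product measure, expanding $\langle G,\mathscr X^{(k)N}_0\rangle\langle H,\mathscr X^{(k)N}_0\rangle$ by \cref{lemma:product_fields} and retaining the single-overlap term of order $N^{-d}$ produces exactly \eqref{eq:covariance}, with the occupation variance $\E_{\mu_\scale}[(\eta(x)-\alpha\scale)^2]=\alpha\scale(1+\sigma\scale)$ weighting the overlapping coordinate $\int g_ih_j$ and the means $\int g_l\,\alpha\scale$ weighting the others; Gaussianity follows from a standard characteristic-function computation for sums of independent contributions.

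The heart of the argument is the quadratic variation. By stationarity, $\E_{\mu_\scale}[\langle G,\mathscr M^{(k,\scale)N}_t\rangle^2]=t\,N^{d}\,\E_{\mu_\scale}[\Gamma^N(\langle G,\mathscr X^{(k)N}\rangle)]$, where $\Gamma^N(F)=\mathcal L^N(F^2)-2F\,\mathcal L^N F$ is the carr\'e du champ. Expanding $\Gamma^N$ with the explicit form \eqref{eq:generator_infinite} of $\mathcal L^N$, each bond $\{x,y\}$ contributes a squared discrete gradient of $G$ in one of the $k$ coordinates (the jump rate $N^2$ balancing the squared difference $O(N^{-2})$, and the surviving factor matching the scaling $N^d$ against the bond sum) multiplied by an occupation observable; the equilibrium expectation of that observable across a bond is the mobility $\E_{\mu_\scale}[\eta(x)(\alpha+\sigma\eta(y))]=\alpha^2\scale(1+\sigma\scale)$, while the remaining $k-1$ coordinates decouple, through \cref{lemma:product_fields}, into $\prod_{l\neq i}\int g_l\,\alpha\scale$. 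Summing over the coordinate on which the jump acts, and over the two copies produced when squaring, reproduces the double sum $\sum_{i,j}\mathscr U^{(k,\scale)}_{\{i,j\}}(G)$ of \eqref{eq:UK}--\eqref{eq:UKij} with the pairing $\nabla g_i\cdot\nabla g_j$. I would then upgrade this convergence of the mean to $L^1$-convergence of the predictable quadratic variation of $\langle G,\mathscr M^{(k,\scale)N}_\cdot\rangle$ to $t\,\mathscr U^{(k,\scale)}(G)$ via a second-moment estimate, so that \eqref{eq:martingale_problem2} holds in the limit. The main obstacle I anticipate is exactly this bookkeeping: arranging the collision terms so that they are simultaneously negligible in the drift yet yield the precise mobility $\alpha^2\scale(1+\sigma\scale)$ here, and checking that the $k-1$ spectator coordinates factorize into the stated products with vanishing error, all of which is governed by the order-$\le 2k$ expansion of \cref{lemma:product_fields}.

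Finally, tightness of $\{\mathscr Y^{(k,\scale)N}_\cdot\}$ in $\mathcal D([0,\infty),(\mathscr S^{(k)})')$ reduces, by Mitoma's criterion, to tightness of the real-valued processes $\{\langle G,\mathscr Y^{(k,\scale)N}_\cdot\rangle\}$ for each fixed $G$, which I would obtain from the Aldous criterion applied to the decomposition above, using the uniform equilibrium moment bounds to control drift and martingale. Every limit point is then a stationary process satisfying the covariance \eqref{eq:covariance} and the martingale problem \eqref{eq:martingale_problem1}--\eqref{eq:martingale_problem2}; since the drift $\mathscr A^{(k)}$ is bounded and the noise deterministic, uniqueness of this generalized Ornstein--Uhlenbeck martingale problem (Holley--Stroock, cf.\ \cite[\S11]{kipnis_scaling_1999}) identifies the limit as $\{\mathscr Y^{(k,\scale)}_t\}$ and promotes subsequential convergence to convergence of the full sequence, completing the proof.
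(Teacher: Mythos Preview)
Your outline matches the paper's proof closely: Dynkin decomposition via duality, drift replacement $\widehat A^{(k)N}\to\mathscr A^{(k)}$ using the product expansion of \cref{lemma:product_fields}, convergence of the carr\'e du champ to the deterministic $\mathscr U^{(k,\scale)}(G)$ (this is the paper's Lemma in \cref{section:proof_fluctuation_martingales}), Mitoma tightness, and Holley--Stroock uniqueness. The paper also introduces a companion to \cref{lemma:product_fields} for \emph{products of expectations} (\cref{lemma:product_expectations}), which is what makes the cancellation in $N^d(\E[\langle G,\mathscr X^{(k)N}\rangle\langle H,\mathscr X^{(k)N}\rangle]-\E[\cdot]\E[\cdot])$ transparent; you effectively assume this identity when you say ``retain the single-overlap term''.

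The one place your sketch is genuinely thin is the Gaussianity of the one-time distribution. The $k$th-order fluctuation field is \emph{not} a sum of independent site contributions---$[\eta]_\bx$ is a degree-$k$ polynomial in the occupation variables---so ``a standard characteristic-function computation for sums of independent contributions'' does not apply directly. The paper handles this by an iterative reduction: it writes
\[
\langle G,\mathscr Y^{(k,\scale)N}\rangle=\langle\widehat G_{\{k\}},\mathscr X^{(k-1)N}\rangle\,\langle g_k,\mathscr Y^{(1,\scale)N}\rangle+E_{\mu_\scale}[\langle g_k,\mathscr X^{(1)N}\rangle]\,\langle\widehat G_{\{k\}},\mathscr Y^{(k-1,\scale)N}\rangle+o_{L^2}(1),
\]
replaces $\langle\widehat G_{\{k\}},\mathscr X^{(k-1)N}\rangle$ by its deterministic mean (a law-of-large-numbers step), and iterates to obtain $\langle G,\mathscr Y^{(k,\scale)N}\rangle=\langle\phi^N(G),\mathscr Y^{(1,\scale)N}\rangle+o_{L^2}(1)$ for a deterministic $\phi^N(G)\in\mathscr S^{(1)}$, after which the standard $k=1$ CLT applies. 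You should either reproduce this reduction or replace your characteristic-function remark with a moment argument showing that all centered moments of $\langle G,\mathscr Y^{(k,\scale)N}\rangle$ converge to Gaussian moments; the latter is doable via \cref{lemma:product_fields} but is more laborious than the paper's route.
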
 

\begin{remark}[\textsc{covariance structure}]
	The form of the covariations in \eqref{eq:covariance} are reminiscent of that of inner products in the space of  tensor powers of $\mathscr S^{(1)}$. Indeed, for all $G = \otimes_{i=1}^k\, g_i \in \mathscr S^{(k)}$ and $H= \otimes_{i=1}^k\, h_i \in \mathscr S^{(k)}$, the structure of \eqref{eq:covariance} resembles that of the permanent of the matrix $((g_i,h_j))_{i,j=1}^k$ (see e.g. \cite[Exercise I.5.5]{bhatia2013matrix}), suitably rescaled by the mobility. We refer to \cite{minc1984permanents} for further references on permanents and their use in quantum field theory. 
	Let us further observe that an analogous structure appears in the predictable quadratic variation $\mathscr U^{(k,\scale)}(G)$ in \eqref{eq:UK}--\eqref{eq:UKij} with $\mathcal H_1$-inner products instead.
	
	In particular,  this connection with permanents and inner products in tensor product spaces explain not only the non-negativeness of $\mathscr U^{(k,\scale)}(G)$, for all $G \in \mathscr S^{(k)}$  and $k \in \N$, but also the tensorized structure of the stationary Gaussian process $\{\mathscr Y^{(k,\scale)}_t: t \geq 0\}$. Indeed, while the former is derived immediately from the Cauchy-Schwarz inequality (see e.g.\ \cite[\S 2.2]{minc1984permanents}), the latter follows from the tensorized structures of the Gaussian initial condition $\mathscr Y^{(k,\scale)}_0$,  the drift term on the r.h.s.\ of \eqref{eq:martingale_problem1} determined by $\mathscr A^{(k)}=\oplus_{i=1}^k\, \mathscr A^{(k)}_i$, and the white noise $\{\mathscr M^{(k,\scale)}_t: t \geq 0\}$ with predictable quadratic variations  \eqref{eq:UK}--\eqref{eq:UKij}.
\end{remark}

\subsubsection{Comparison with Theorem 5.1 in \cite{ayala2020higher}}\label{section:comparison}
	 \cref{theorem:fluctuations} above should be interpreted as a result determining the Gaussian limiting behavior of the equilibrium fluctuations of order $N^{d/2}$ for the higher-order fields $\{\mathscr X^{(k)N}_\cdot: N \in \N\}$ associated with the linear interacting particle systems introduced in \cref{section:infinite_particle_systems}.	A related result has been recently established in \cite{ayala2020higher}, for the same particle systems, although the equilibrium fluctuations considered there are of order $N^{kd/2}$.	Such dissimilarity may be explained in terms of different centering procedures used to define  the $k$th-order fluctuation fields. Indeed, let us introduce, for all $k, \ell \in \N_0$, $\ell \leq k$,  and $N \in \N$ the  operator $\mathscr K^{(k\:\ell)N}: \mathscr S^{(k)}\to \mathscr S^{(\ell)}$ given, for all $\bx \in (\Z^d)^\ell$, by
	 \begin{align*}
	 \mathscr K^{(k\:\ell)N}G(\tfrac{\bx}{N}):= \frac{1}{N^{kd-\ell d}}\sum_{\by \in (\Z^d)^{k-\ell}} G^\sy(\tfrac{\bx\:\by}{N}) \frac{\pi(\bx\:\by)}{\pi(\bx)} \ .
	 \end{align*}
	 Clearly, if $\ell=k$, then $\mathscr K^{(k\:k)}G=G^\sy$, while,  if $\ell =0$, we have
	 \begin{align*}
	 E_{\mu_\scale}\left[\langle G, \mathscr X^{(k)N}\rangle \right]= \scale^k \langle \mathscr K^{(k\:0)N}G, \mathscr X^{(0)N}\rangle\ ,	
	 \end{align*}
	 from which, by  \eqref{eq:fluctuation_fields2}, we get
	 \begin{equation}\label{eq:fluctuation_centering1}
	 \langle G, \mathscr Y^{(k,\scale)N}\rangle = N^{d/2} \left(\langle \mathscr K^{(k\:k)N}G, \mathscr X^{(k)N}\rangle - \scale^k \langle \mathscr K^{(k\:0)N}G, \mathscr X^{(0)N}\rangle\right)\ .	 \end{equation}
	 On the other side, the $k$th-order fluctuation fields $\{\mathscr Z^{(k,\scale)N}_t: t \geq 0\} \in \mathcal D([0,\infty), (\mathscr S^{(k)})')$ introduced in \cite[Eq.\ (30)]{ayala2020higher}  and defined in terms of orthogonal polynomial self-duality functions read as follows:
	 \begin{equation}\label{eq:fluctuation_centering2}
	 \langle G, \mathscr Z^{(k,\scale)N}\rangle= N^{kd/2}\sum_{\ell=0}^k \binom{k}{\ell} (-\scale)^{(k-\ell)} \langle \mathscr K^{(k\:\ell)N}G, \mathscr X^{(\ell)N}\rangle\ .	
	 \end{equation}
	 \cref{theorem:fluctuations} above and \cite[Theorem 5.1]{ayala2020higher} show that the different centering procedure employed in the two definitions  \eqref{eq:fluctuation_centering1} and \eqref{eq:fluctuation_centering2} is responsible for the difference in the size of the fluctuations (of order  $N^{-d/2}$ and $N^{-kd/2}$, respectively) and in the nature of the limiting predictable quadratic variations associated with the corresponding Dynkin's martingales (deterministic in \cref{theorem:fluctuations}, while stochastic and in terms of	 lower order fluctuation fields in \cite[Theorem 5.1]{ayala2020higher}). These findings suggest that several notions of equilibrium fluctuation fields---each detecting fluctuations of order $N^{-\ell d/2}$, $\ell \in [k]$, and corresponding to different centerings---are possible.
\section{Proof of \cref{theorem:hydrodynamics}}\label{section:proof_hydrodynamics}
Before turning to the details of the proof of  \cref{theorem:hydrodynamics}, we recall its main steps  below. 

First, 	by Dynkin's formula for Markov processes, for all $N \in \N$, $t \geq 0$ and  $G \in \mathscr S^{(k)}$, we have
\begin{equation}\label{eq:dynkin}
\langle G, \mathscr X^{(k)N}_t\rangle = \langle G, \mathscr X^{(k)N}_0\rangle + \int_0^t \mathcal L^N \langle G, \mathscr X^{(k)N}_s\rangle\, \dd s + \langle G, \mathscr M^{(k)N}_t\rangle\ ,
\end{equation}
with $\{\mathscr M^{(k)N}_t: t \geq 0 \}$ a $(\mathscr S^{(k)})'$-valued martingale with predictable quadratic covariations given, for all  $G, H \in \mathscr S^{(k)}$ and $t \geq 0$, by
\begin{align*}
\int_0^t\left\{\mathcal L^N\left(\langle G, \mathscr X^{(k)N}_s \rangle \langle H,\mathscr X^{(k)N}_s\rangle \right)  - \langle G, \mathscr X^{(k)N}_s\rangle\,  \mathcal L^N \langle H, \mathscr X^{(k)N}_s\rangle - \langle H, \mathscr X^{(k)N}_s\rangle\, \mathcal L^N \langle G, \mathscr X^{(k)N}_s\rangle \right\} \dd s\ .
\end{align*}
In view of the duality relation \eqref{eq:duality_relation}, for all $N \in \N$, $G \in \mathscr S^{(k)}$ and $t \geq 0$, we have
\begin{align*}
\mathcal L^N\langle G, \mathscr X^{(k)N}_t\rangle=&\ \frac{1}{N^{kd}}\sum_{\bx \in (\Z^d)^k} G(\tfrac{\bx}{N})\, \mathcal L^N [\eta^N_t]_\bx\\
=&\ \frac{1}{N^{kd}}\sum_{\bx \in (\Z^d)^k}G(\tfrac{\bx}{N})\, A^{(k)N}\left(\frac{[\eta^N_t]_\cdot}{\pi(\cdot)} \right)(\bx)\, \pi(\bx)= \langle \mathscr A^{(k)N}G,\mathscr X^{(k)N}_t\rangle\ ,
\end{align*}
where in the last identity we have used \eqref{eq:self-adjoint} with\footnote{If necessary, the definition of $\mathscr	 A^{(k)N}G$ may be extended  to  $(\R^d)^k \setminus ((\Z/N)^d)^k$  so to guarantee linearity and boundedness of $\mathscr A^{(k)N} : \mathscr S^{(k)}\to \mathscr S^{(k)}$.}
\begin{equation}\label{eq:generatorkN}
\mathscr A^{(k)N}G(\tfrac{\bx}{N}):= A^{(k)N}G(\tfrac{\cdot}{N})(\bx)\ .
\end{equation}
	
Being the limiting process $\{\sscale_t: t \geq 0\}$ deterministic, \cref{theorem:hydrodynamics} boils down  to show, for all $G \in \mathscr S^{(k)}$, $T > 0$ and $\delta > 0$,
\begin{align}\label{eq:convergence_probability}
&\ \Pr^N_{\mu^N}\left(\sup_{t \in [0,T]}\left|\langle G, \mathscr X^{(k)N}_t\rangle - \langle G, \sscale_t\rangle \right|> \delta \right)\underset{N\to \infty}\longrightarrow 0\ ,
\end{align}
which,  in view of both  decompositions in \eqref{eq:deterministic_heat_equation} and  \eqref{eq:dynkin}, is equivalent to
\begin{equation}\label{eq:convergence_probability2}
\Pr^N_{\mu^N}\left(\sup_{t \in [0,T]}\left|\left(\langle G, \mathscr X^{(k)N}_0\rangle-\langle G, \ssscale\rangle \right) + \int_0^t \langle \mathscr A^{(k)N}G-\mathscr A^{(k)}G, \mathscr X^{(k)N}_s\rangle\, \dd s + \langle G, \mathscr M^{(k)N}_t\rangle \right|> \delta \right)\underset{N\to \infty}\longrightarrow 0\ .
\end{equation}
For the purpose of proving \eqref{eq:convergence_probability}, at first we show it for  test functions $G \in \mathscr S^{(k)}$  in product form, i.e.\ 
\begin{equation*}G = \otimes_{i=1}^k\, g_i\quad \text{with}\quad \left\{g_i: i =1,\ldots, k\right\} \subseteq \mathscr S^{(1)}\ .
\end{equation*}
We remark that the fact that 	 $\mathscr A^{(k)N}_i$ and $\mathscr A^{(k)}_i$, $i \in [k]$, map product functions into product functions will be used repeatedly all throughout our proofs. As soon as  \eqref{eq:convergence_probability} holds for test functions,  the existence of a pure tensor product orthonormal basis in $L^2((\R^d)^k)$ spanning $\mathscr S^{(k)}$, the nuclear structure of $\mathscr S^{(k)}$ and a density argument complete the proof of \cref{theorem:hydrodynamics}; we postpone these last details to  Appendix \ref{appendix:test_functions}.

The proof of \eqref{eq:convergence_probability} for product test functions reduces to the proof that  each of the three terms between absolute value in \eqref{eq:convergence_probability2} vanish  as $N \to \infty$ in a suitable sense. More specifically, by means of Chebyshev's and Cauchy-Schwarz inequalities,
\begin{equation}\label{eq:convergence_integral_terms}
\sup_{t \geq 0} \E^N_{\mu^N}\left[\left(\langle \mathscr A^{(k)N}G - \mathscr A^{(k)}G, \mathscr X^{(k)N}_t\rangle \right)^2 \right]\underset{N\to \infty}\longrightarrow 0	
\end{equation}
takes care  of the term involving the time integral, while Chebyshev's and  Doob's martingale inequalities combined with
\begin{equation}\label{eq:vanishing_martingale}
\sup_{t \geq 0} \E^N_{\mu^N}\left[\left(\langle G, \mathscr M^{(k)N}_t\rangle \right)^2 \right]\underset{N\to \infty}\longrightarrow 0
\end{equation}
show that the martingale term in \eqref{eq:convergence_probability2} vanishes in the limit.
 The proofs of \eqref{eq:convergence_integral_terms} and \eqref{eq:vanishing_martingale} are the contents of, respectively,  Sections \ref{section:proof_integral_term} and \ref{section:carre_du_champ} below. For all functions $G \in \mathscr S^{(k)}$ for which \eqref{eq:convergence_integral_terms} and \eqref{eq:vanishing_martingale} hold, assumption \ref{it:assumption_LLN} of  \cref{theorem:hydrodynamics},  ensuring convergence at the initial time of the $k$th-order fields,  yields \eqref{eq:convergence_probability}; in particular,    assumption \ref{it:assumption_LLN} does not  play any role in the proof of both \eqref{eq:convergence_integral_terms} and \eqref{eq:vanishing_martingale}.
	
The proof of  \cref{theorem:hydrodynamics} ends by observing  that the limit points are supported on $\mathcal C([0,\infty),(\mathscr S^{(k)})')$ (the argument is standard since, due to Markovianity, particles jump one at the time, see e.g.\ \cite[\S 2.7]{de_masi_mathematical_1991}) and that the deterministic solution of the degenerate martingale problem \eqref{eq:deterministic_heat_equation} is unique in this space  (see e.g.\ \cite{holley_generalized_1978} with $B\equiv 0$ in Theorem 1.4).

\subsection{Proof of \eqref{eq:convergence_integral_terms}}\label{section:proof_integral_term}
In what follows, we fix $k \in \N$ and prove that, for all $G = \otimes_{i=1}^k\, g_i\in \mathscr S^{(k)}$ in product form,  \eqref{eq:convergence_integral_terms} holds true.
	For this purpose, let us recall \eqref{eq:generatorA}--\eqref{eq:generatorC} and define
	\begin{equation}\label{eq:definitionsBCN}
	\mathscr A^{(k)N}_iG(\tfrac{\bx}{N}):=\tfrac{\alpha}{2}\Delta^{(k)N}_iG(\tfrac{\bx}{N}):=  A^{(k)N}_iG(\tfrac{\cdot}{N})(\bx)\qquad \text{and}\qquad \mathscr B^{(k)N}_{i,j}G(\tfrac{\bx}{N}):=  B^{(k)N}_{i,j}G(\tfrac{\cdot}{N})(\bx)\ .
	\end{equation}

In view of  the definitions of the operators 
		 $\mathscr A^{(k)N}$ and $\mathscr  A^{(k)}$ in \eqref{eq:generatorkN} and \eqref{eq:k-laplacian}, respectively, \eqref{eq:convergence_integral_terms} follows if, for all $i, j \in \{1,\ldots, k\}$ with $i \neq j$, both
		 \begin{equation}\label{eq:integral_terms1}
		 \sup_{t \geq 0}\E^N_{\mu^N}\left[\left(\langle \mathscr A^{(k)N}_iG-\mathscr A^{(k)}_iG,\mathscr X^{(k)N}_t\rangle \right)^2 \right]\underset{N\to \infty}\longrightarrow 0
		 \end{equation}
		 and
		 \begin{equation}\label{eq:integral_terms2}
		 \sup_{t \geq 0}\E^N_{\mu^N}\left[\left(\langle \mathscr B^{(k)N}_{i,j}G, \mathscr X^{(k)N}_t\rangle \right)^2 \right]\underset{N\to \infty}\longrightarrow 0
		 \end{equation}
		 hold true. Let us start with the proof of \eqref{eq:integral_terms1}.
		 
		 \
		 
		 Let us fix $i \in [k]$. By  \cref{lemma:product_fields}, we obtain, for all $N \in \N$ and $t \geq 0$, 
		 \begin{align*}
		 &\ \E^N_{\mu^N}\left[\left(\langle \mathscr A^{(k)N}_iG-\mathscr A^{(k)}_iG,\mathscr X^{(k)N}_t\rangle \right)^2 \right]\\
		 =&\ \sum_{\ell=0}^k \frac{1}{N^{\ell d}}\, \E^N_{\mu^N}\left[\langle \{(\mathscr A^{(k)N}_iG-\mathscr A^{(k)}_iG)\otimes (\mathscr A^{(k)N}_iG-\mathscr A^{(k)}_iG) \}^{(2k-\ell)}, \mathscr X^{(2k-\ell)N}_t\rangle \right]\ .
		 \end{align*}
		 We claim that,  for each $\ell \in [k]_0$, 
		 \begin{equation}\label{eq:vanishing_difference_drift_l}
		 \E^N_{\mu^N}\left[\langle \{(\mathscr A^{(k)N}_iG-\mathscr A^{(k)}_iG)\otimes (\mathscr A^{(k)N}_iG-\mathscr A^{(k)}_iG) \}^{(2k-\ell)}, \mathscr X^{(2k-\ell)N}_t\rangle \right] \underset{N\to \infty}\longrightarrow 0
		 \end{equation}
		 holds. Indeed, let us first observe that the function $F^N:=\mathscr A^{(k)N}_iG-\mathscr A^{(k)}_iG $ is in product form and, if we define
		 \begin{equation*}
		 F^N=\otimes_{i=1}^k\, f^N_i\qquad \text{with}\qquad f^N_j:= \begin{cases}\mathscr A^{(1)N}g_i-\mathscr A^{(1)}g_i &\text{if}\ j=i\\
		 g_j &\text{if}\ j\neq i
		 \ ,
		 \end{cases}
		 \end{equation*} 
		 we have
		 \begin{equation}\label{eq:fN1}
		 \limsup_{N\to \infty}\frac{1}{N^d}\sum_{x_j}|f^N_j(\tfrac{x_j}{N})|<\infty\qquad \text{and}\qquad \limsup_{N\to \infty}\sup_{x_j}|f^N_j(\tfrac{x_j}{N})|<\infty
		 \end{equation}
		 for all $j \in [k]$,	 as well as
		 \begin{equation}\label{eq:fN2}
		 \frac{1}{N^d}\sum_{x_i}|f^N_i(\tfrac{x_i}{N})|\underset{N\to \infty}\longrightarrow 0\qquad \text{and}\qquad \sup_{x_i}|f^N_i(\tfrac{x_i}{N})|\underset{N\to \infty}\longrightarrow 0\ ,
		 \end{equation}
		  the latter being a consequence of the smoothness of $g_i \in \mathscr S^{(1)}$ and the approximation of the Laplacian of $g_i$ by its discrete counterparts.		 
		 By the definitions given in the statement of  \cref{lemma:product_fields}, we have
		 \begin{align*}
		 &\  \left| \E^N_{\mu^N}\left[\langle \{F^N\otimes F^N \}^{(2k-\ell)}, \mathscr X^{(2k-\ell)N}_t\rangle \right]\right|\\
		 =&\ \left|\frac{1}{N^{kd}}\sum_{\bx\in (\Z^d)^k} F^N(\tfrac{\bx}{N}) \frac{1}{N^{kd-\ell d}}\sum_{\by \in (\Z^d)^k} F^N(\tfrac{\by}{N})\, \E^N_{\mu^N}\left[ \{\eta^N_t|(\bx,\by)\}^{(2k-\ell)}\right]\right|\\
		 \leq&\ \scale^{2k-\ell}\cpi^{(2k-\ell)}\sum_{\substack{\mathcal J\subseteq [k]\\
		 |\mathcal J|=\ell}}\sum_{\substack{l:\mathcal J\to [k]\\\oto}} \frac{1}{N^{kd}}\sum_{\bx \in (\Z^d)^k}\left|F^N(\tfrac{\bx}{N}) \right| \frac{1}{N^{kd-\ell d}}\sum_{\by \in (\Z^d)^k}
	 \left|F^N(\tfrac{\by}{N}) \right|	\prod_{j\in J} \1_{\{y_j=x_{l_j}\}}\ ,
		 \end{align*}
		 where in the last inequality we have used the upper bound in \eqref{eq:assumption_bound_timet}. Now, it suffices to observe that, for all $\mathcal J\subseteq [k]$, $|\mathcal J|=\ell$, and one-to-one maps $l:\mathcal J\to [k]$, by the product structure of the function $F^N$ and H\"older's inequality, 
		 \begin{align*}
		 &\ \frac{1}{N^{kd}}\sum_{\bx \in (\Z^d)^k}\left|F^N(\tfrac{\bx}{N}) \right| \frac{1}{N^{k d-\ell d}}\sum_{\by \in (\Z^d)^k}
		 \left|F^N(\tfrac{\by}{N}) \right|	\prod_{j\in J} \1_{\{y_j=x_{l_j}\}}\\
		 \leq&\ \prod_{h=1}^k \left\{\frac{1}{N^d}\sum_{x_h} |f^N_h(\tfrac{x_h}{N})| \right\} \prod_{j \in \mathcal J} \left\{\sup_{x_j} |f^N_j(\tfrac{x_j}{N})| \right\}\prod_{j' \notin \mathcal J}\left\{\frac{1}{N^d}\sum_{y_{j'}} |f^N_{j'}(\tfrac{y_{j'}}{N})| \right\}
		 \end{align*}
		 holds. As a consequence,  \eqref{eq:fN1} and \eqref{eq:fN2} finally yield \eqref{eq:integral_terms1},  all these bounds being independent of $t \geq 0$.

		 \
		 	
		 Let us now turn our attention to  \eqref{eq:integral_terms2} and state the following property for the functions $\mathscr B^{(k)N}_{i,j}G$. 
		 \begin{proposition}\label{proposition:permutationB}
		 	Let $f:(\Z^d)^k\to \R$ be a function growing at most polynomially to infinity and such that, for  $i, j \in [k]$ with $i \neq j$,
		 	\begin{equation*}
		 	f(\bx)=f((ij)\bx)\ ,
		 	\end{equation*}
		 	for all $\bx \in (\Z^d)^k$, 
		 	with $(ij)\in \Sigma_k$ acting $\bx \in (\Z^d)^k$ by permuting  the indices $i$ and $j$. Then, for all functions $G=\otimes_{i=1}^k\, g_i \in \mathscr S^{(k)}$ in product form,
		 	\begin{equation}\label{eq:integration_by_partsB}
		 	\frac{1}{N^{kd}}\sum_{\bx \in (\Z^d)^k} \mathscr B^{(k)N}_{i,j}G(\tfrac{\bx}{N})\, f(\bx)
		 	= \frac{1}{2N^{kd}}\sum_{\bx\in (\Z^d)^k} \left(-\nabla^{(k)N}_{i,j} \nabla^{(k)N}_{j,i}G(\tfrac{\bx}{N})\right)  f(\bx)\ ,
		 	\end{equation} 
		 	where, for all $\bx = (x_1,\ldots, x_k)\in (\Z^d)^k$,	
		 	\begin{equation}\label{eq:double_gradient}
		 	\nabla^{(k)N}_{i,j} G(\tfrac{\bx}{N}):=
		 	\left\{\nabla^{(1)N}_{x_j}g_i(\tfrac{x_i}{N}) \right\} \prod_{\substack{h=1\\h\neq i}}^k g_h(\tfrac{x_h}{N})
		 	:= \left\{N(g_i(\tfrac{x_j}{N})-g_i(\tfrac{x_i}{N}) )\, \1_{\{|x_i-x_j|=1\}}\right\}\prod_{\substack{h=1\\h\neq i}}^k g_h(\tfrac{x_h}{N})\ .	
		 	\end{equation}
		 \end{proposition}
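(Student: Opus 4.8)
The plan is to establish \eqref{eq:integration_by_partsB} by a discrete summation-by-parts argument that hinges on the assumed invariance $f(\bx)=f((ij)\bx)$. First I would unfold the left-hand side for a product function $G=\otimes_{h=1}^k g_h$. Recalling \eqref{eq:generatorC} and \eqref{eq:definitionsBCN}, its summand is
\[
\mathscr B^{(k)N}_{i,j}G(\tfrac{\bx}{N}) = \1_{\{|x_i-x_j|=1\}}\, N^2\big(g_i(\tfrac{x_j}{N}) - g_i(\tfrac{x_i}{N})\big)\prod_{h\neq i} g_h(\tfrac{x_h}{N})\ ,
\]
which singles out one discrete increment of $g_i$ along the edge $\{x_i,x_j\}$ while leaving the remaining factors intact.

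The key step is to symmetrize the sum under the transposition $(ij)$. Because $G$ is Schwartz and $f$ grows at most polynomially, all the sums converge absolutely, so the relabeling $\bx\mapsto(ij)\bx$ is legitimate; combined with $f((ij)\bx)=f(\bx)$ it yields
\[
\frac{1}{N^{kd}}\sum_{\bx} \mathscr B^{(k)N}_{i,j}G(\tfrac{\bx}{N})\, f(\bx) = \frac{1}{2N^{kd}}\sum_{\bx}\Big[\mathscr B^{(k)N}_{i,j}G(\tfrac{\bx}{N}) + \mathscr B^{(k)N}_{i,j}G(\tfrac{(ij)\bx}{N})\Big] f(\bx)\ .
\]
Evaluating $\mathscr B^{(k)N}_{i,j}G$ at $(ij)\bx$ swaps $x_i\leftrightarrow x_j$, so the $j$-th factor becomes $g_j(\tfrac{x_i}{N})$ and the increment flips sign to $g_i(\tfrac{x_i}{N})-g_i(\tfrac{x_j}{N})$; upon adding the two contributions the shared factor $\prod_{h\neq i,j}g_h$ pulls out and the two increments combine into a single product
\[
\mathscr B^{(k)N}_{i,j}G(\tfrac{\bx}{N}) + \mathscr B^{(k)N}_{i,j}G(\tfrac{(ij)\bx}{N}) = \1_{\{|x_i-x_j|=1\}}\, N^2\big(g_i(\tfrac{x_j}{N}) - g_i(\tfrac{x_i}{N})\big)\big(g_j(\tfrac{x_j}{N}) - g_j(\tfrac{x_i}{N})\big)\prod_{h\neq i,j}g_h(\tfrac{x_h}{N})\ .
\]

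Finally I would identify the resulting kernel with $-\nabla^{(k)N}_{i,j}\nabla^{(k)N}_{j,i}G(\tfrac{\bx}{N})$ as defined in \eqref{eq:double_gradient}. Applying $\nabla^{(k)N}_{j,i}$ first produces, on the $j$-th factor, the increment $N(g_j(\tfrac{x_i}{N})-g_j(\tfrac{x_j}{N}))\1_{\{|x_i-x_j|=1\}}$ and leaves $g_i$ untouched in the $i$-th slot; the outer $\nabla^{(k)N}_{i,j}$ then acts on that $g_i$, producing $N(g_i(\tfrac{x_j}{N})-g_i(\tfrac{x_i}{N}))\1_{\{|x_i-x_j|=1\}}$, the two idempotent indicators collapsing to one. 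Up to the overall sign this reproduces exactly the symmetrized kernel above, and substituting it into the previous display gives \eqref{eq:integration_by_partsB}. I expect the only point requiring care to be the bookkeeping of the composition $\nabla^{(k)N}_{i,j}\nabla^{(k)N}_{j,i}$—checking that the two discrete gradients act on the \emph{distinct} factors $g_i$ and $g_j$, so the indicators merge and no cross-terms appear—while the absolute convergence underwriting the relabeling is guaranteed, at no real cost, by the rapid decay of $G$ against the polynomial growth of $f$.
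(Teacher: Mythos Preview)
Your proof is correct and follows essentially the same route as the paper's: unfold the definition of $\mathscr B^{(k)N}_{i,j}G$, use the $(ij)$-invariance of $f$ together with a relabeling of the summation to symmetrize, and then recognize the resulting product of discrete increments as $-\nabla^{(k)N}_{i,j}\nabla^{(k)N}_{j,i}G$. The paper compresses all of this into a single sentence, while you have carefully written out the intermediate identity and the bookkeeping for the composition of the two gradient operators; both arguments are the same in substance.
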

		 \begin{proof}
		 	The claim is an immediate consequence of the definition 
		 	\begin{equation*}
		 	\mathscr B^{(k)N}_{i,j}G(\tfrac{\bx}{N})= N^2\left(g_i(\tfrac{x_j}{N})-g_i(\tfrac{x_i}{N}) \right) \1_{\{|x_i-x_j|=1\}}\prod_{\substack{h=1\\h\neq i}}^k g_h(\tfrac{x_h}{N})\ ,
		 	\end{equation*}
		 	 the permutation invariance of $f: (\Z^d)^k\to \R$ under $(ij)$ and a rearrangement of the summation over $\bx \in (\Z^d)^k$ in \eqref{eq:integration_by_partsB}.
		 \end{proof}
		 In order to prove \eqref{eq:integral_terms2}, as in the proof of \eqref{eq:integral_terms1}, it suffices to show  that, 
		for all $\ell \in [k]_0$, 
		 \begin{align}\nonumber
		 \label{eq:vanishBNl}
		 &\ \E^N_{\mu^N}\left[\langle \{ \mathscr B^{(k)N}_{i,j}G \otimes \mathscr B^{(k)N}_{i,j}G\}^{(2k-\ell)}, \mathscr X^{(2k-\ell)N}_t\rangle \right]\\	
		  =&\ \E^N_{\mu^N}\left[ \frac{1}{N^{kd}}\sum_{\bx \in (\Z^d)^k}\mathscr B^{(k)N}_{i,j}G(\tfrac{\bx}{N})\frac{1}{N^{kd-\ell d}}\sum_{\by \in (\Z^d)^k} \mathscr B^{(k)N}_{i,j}G(\tfrac{\by}{N})\left\{\eta^N_t|(\bx,\by) \right\}^{(2k-\ell)} \right]\ \underset{N\to \infty}\longrightarrow 0\ 
		\end{align}
		holds. To this aim, by \cref{it:permutation1} and \cref{it:permutation2} in  \cref{proposition:properties_product}, we apply  \cref{proposition:permutationB}  twice, yielding
		\begin{align}\label{eq:identityB}
		 &\  \E^N_{\mu^N}\left[\langle \{ \mathscr B^{(k)N}_{i,j}G \otimes \mathscr B^{(k)N}_{i,j}G\}^{(2k-\ell)}, \mathscr X^{(2k-\ell)N}_t\rangle \right]\\
		 =&\ \frac{1}{2N^{kd}}\sum_{\bx\in (\Z^d)^k} \left(-\nabla^{(k)N}_{i,j} \nabla^{(k)N}_{j,i}G(\tfrac{\bx}{N})\right) \frac{1}{2N^{k d-\ell d}}\sum_{\by \in (\Z^d)^k}\left(-\nabla^{(k)N}_{i,j} \nabla^{(k)N}_{j,i}G(\tfrac{\by}{N})\right)  \E^N_{\mu^N}\left[\{\eta^N_t|(\bx,\by)\}^{(2k-\ell)} \right]\ .
		 \nonumber
		\end{align}
		For notational convenience, let us split into the two cases, $\ell=0$ and $\ell \in [k]$. In the first case, $\ell=0$, by \eqref{eq:assumption_bound_timet} and \cref{it:tensor} in  \cref{proposition:properties_product}, we have
		\begin{align*}
		\left|\E^N_{\mu^N}\left[\langle \{ \mathscr B^{(k)N}_{i,j}G \otimes \mathscr B^{(k)N}_{i,j}G\}^{(2k)}, \mathscr X^{(2k-\ell)N}_t\rangle \right] \right|
		\leq&\ \scale^{2k}\cpi^{(2k)} \left(\frac{1}{2N^{kd}}\sum_{\bx \in (\Z^d)^k}\left|\nabla^{(k)N}_{i,j} \nabla^{(k)N}_{j,i}G(\tfrac{\bx}{N}) \right| \right)^2\ ,
		\end{align*}
		which, by 
		\begin{align}\label{eq:bound_dirichlet_form}
		 &\ \frac{1}{2N^{kd}}\sum_{\bx \in (\Z^d)^k}\left|\nabla^{(k)N}_{i,j} \nabla^{(k)N}_{j,i}G(\tfrac{\bx}{N}) \right|\\
		 \nonumber
		\leq&\  \frac{1}{N^d}\left\{\frac{1}{N^d}\sum_{x_i}\sum_{x_j} N^2\left(g_i(\tfrac{x_j}{N})-g_i(\tfrac{x_i}{N})  \right)^2\1_{\{|x_i-x_j|=1\}}\right\}\prod_{\substack{h=1\\h \neq i,j}}^k \left\{\frac{1}{N^d}\sum_{x_h}|g_h(\tfrac{x_h}{N})| \right\}\\
		\nonumber
		+&\ \frac{1}{N^d}\left\{\frac{1}{N^d}\sum_{x_i}\sum_{x_j} N^2\left(g_j(\tfrac{x_j}{N})-g_j(\tfrac{x_i}{N})  \right)^2\1_{\{|x_i-x_j|=1\}}\right\}\prod_{\substack{h=1\\h \neq i,j}}^k \left\{\frac{1}{N^d}\sum_{x_h}|g_h(\tfrac{x_h}{N})| \right\}\\
		\nonumber
		=&\ \frac{1}{N^d} \left\{\frac{1}{N^d}\sum_{x_i}g_i(\tfrac{x_i}{N})(-\tfrac{1}{2}\Delta^{(1)N}g_i)(\tfrac{x_i}{N}) + \frac{1}{N^d}\sum_{x_i}g_j(\tfrac{x_i}{N})(-\tfrac{1}{2}\Delta^{(1)N}g_j)(\tfrac{x_i}{N}) \right\}\prod_{\substack{h=1\\h \neq i,j}}^k \left\{\frac{1}{N^d}\sum_{x_h}|g_h(\tfrac{x_h}{N})| \right\}
		\end{align}
		and the smoothness of the functions $\{g_i: i \in [k]\} \in \mathscr S^{(1)}$, yields \eqref{eq:vanishBNl}. In the second case, $\ell \in [k]$, by recalling the definitions in  \cref{lemma:product_fields}, from	 \eqref{eq:identityB} we get
		\begin{align*}
		 &\  \left|\E^N_{\mu^N}\left[\langle \{ \mathscr B^{(k)N}_{i,j}G \otimes \mathscr B^{(k)N}_{i,j}G\}^{(2k-\ell)}, \mathscr X^{(2k-\ell)N}_t\rangle \right]\right|\\
		 \leq&\ \scale^{2k-\ell}\cpi^{(2k-\ell)}\sum_{\substack{\mathcal J\subseteq [k]\\
		 		|\mathcal J|=\ell}}\sum_{\substack{l:\mathcal J\to [k]\\\oto}} \frac{1}{N^{kd}}\sum_{\bx \in (\Z^d)^k}\left|H^N_{\{i,j\}}(\tfrac{\bx}{N}) \right| \frac{1}{N^{k d-\ell d}}\sum_{\by \in (\Z^d)^k}
		 \left|H^N_{\{i,j\}}(\tfrac{\by}{N}) \right|	\prod_{j'\in J} \1_{\{y_{j'}=x_{l_{j'}}\}}\ ,
		\end{align*}
		where
		\begin{align*}
		H^N_{\{i,j\}}:= \nabla^{(k)N}_{i,j} \nabla^{(k)N}_{j,i}G\ .
		\end{align*}
		For each $\mathcal J\subseteq [k]$, $|\mathcal J|=\ell$, and one-to-one map $l:\mathcal J\to [k]$, let us prove the following:
		\begin{align}\nonumber\label{eq:finalBNij}
		&\ \frac{1}{N^{kd}}\sum_{\bx \in (\Z^d)^k}\left|H^N_{\{i,j\}}(\tfrac{\bx}{N}) \right| \frac{1}{N^{kd-\ell d}}\sum_{\by \in (\Z^d)^k}
		\left|H^N_{\{i,j\}}(\tfrac{\by}{N}) \right|	\prod_{j'\in J} \1_{\{y_{j'}=x_{l_{j'}}\}}\\
		:=&\
		\frac{1}{N^{kd}}\sum_{\bx \in (\Z^d)^k}\left|\nabla^{(k)N}_{i,j} \nabla^{(k)N}_{j,i}G(\tfrac{\bx}{N}) \right| \frac{1}{N^{kd-\ell d}}\sum_{\by \in (\Z^d)^k}
		\left|\nabla^{(k)N}_{i,j} \nabla^{(k)N}_{j,i}G(\tfrac{\by}{N}) \right|	\prod_{j'\in J} \1_{\{y_{j'}=x_{l_{j'}}\}}  \underset{N\to \infty}\longrightarrow 0\ .
		\end{align}
	To this end,  we recall the definition \eqref{eq:double_gradient} and observe   that, despite the fact that the function $H^N_{\{i,j\}}$ is not in product form, the expression in \eqref{eq:finalBNij} further factorizes as follows:
		\begin{equation*}
			\frac{1}{N^{2kd-\ell d}}\sum_{\bx \in (\Z^d)^k}\left|H^N_{\{i,j\}}(\tfrac{\bx}{N}) \right| 	  \prod_{\substack{h=1\\h\neq i,j}}^k\left\{ \sum_{y_h}|g_h(\tfrac{x_h}{N})|\right\} \sum_{y_i}\sum_{y_j} \left|\nabla^{(1)N}_{y_j}g_i(\tfrac{y_i}{N})\nabla^{(1)N}_{y_i}g_j(\tfrac{y_j}{N}) \right|\prod_{j'\in J} \1_{\{y_{j'}=x_{l_{j'}}\}} \ .		
		\end{equation*} 
		In view of this consideration,  the fact that, by the smoothness of the test functions $\{g_i: i \in [k]\}$, we have
		\begin{align*}
		\limsup_{N\to \infty}\sup_{y_h}\sup_{z_h:|z_h-y_h|=1}|\nabla^{(1)N}_{z_h} g_h(\tfrac{y_h}{N})|<\infty
		\end{align*}
		for all $h \in [k]$ and the upper bound in \eqref{eq:bound_dirichlet_form}, we can argue as in the proof of \eqref{eq:integral_terms1} above by using H\"older's inequality. This shows \eqref{eq:finalBNij} and, thus,  concludes the proof of \eqref{eq:integral_terms2}, being all these bounds independent of $t \geq 0$.

\subsection{Proof of \eqref{eq:vanishing_martingale}}\label{section:carre_du_champ} In view of Dynkin's formula \eqref{eq:dynkin} and  the explicit form of the predictable quadratic covariations of the martingale $\{\mathscr M^N_t: t \geq 0\}$, we have, for all $t \geq 0$ and $G \in \mathscr S^{(k)}$, 
\begin{equation}\label{eq:dynkin_martingale}
\E^N_{\mu^N}\left[\left(\langle G, \mathscr M^{(k)N}_t\rangle\right)^2 \right]= \int_0^t \E^N_{\mu^N}\left[\varGamma^{(k)N}_s(G) \right] \dd s\ ,
\end{equation}
where
	\begin{align}\label{eq:carre_du_champ}
\nonumber
\varGamma^{(k)N}_t(G):=&\ 
\mathcal L^N\left(\langle G, \mathscr X^{(k)N}_t\rangle \right)^2 - 2 \langle G, \mathscr X^{(k)N}_t\rangle \, \mathcal L^N \langle G, \mathscr X^{(k)N}_t\rangle\\
=&\ \mathcal L^N\left(\langle G, \mathscr X^{(k)N}_t\rangle \right)^2 - 2 \langle G, \mathscr X^{(k)N}_t\rangle \langle \mathscr A^{(k)N}G, \mathscr X^{(k)N}_t\rangle
\end{align}
is also referred to as the \emph{carr\'{e} du champ} of the higher-order field $\{\mathscr X^{(k)N}_t: t \geq 0\}$ at $G \in \mathscr S^{(k)}$. 	After observing that $	\varGamma^{(k)N}_t(G)\geq 0$ a.s., \eqref{eq:vanishing_martingale} follows from Doob's martingale inequality, \eqref{eq:dynkin_martingale} and the following result.
\begin{lemma}
	\label{lemma:vanishing_martingale}
	For all test functions $G = \otimes_{i=1}^k\, g_i \in \mathscr S^{(k)}$ in product form, we have
	\begin{equation}\label{eq:vanishing_carre}
	\limsup_{N\to \infty} \sup_{t \geq 0} N^d\, \E^N_{\mu^N}\left[\varGamma^{(k)N}_t(G) \right]<\infty\ .	
	\end{equation}
\end{lemma}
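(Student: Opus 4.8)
The plan is to compute the carré du champ $\varGamma^{(k)N}_t(G)$ defined in \eqref{eq:carre_du_champ} explicitly and then bound it termwise. Since $\mathcal L^N$ in \eqref{eq:generator_infinite} is a pure-jump generator, the elementary identity $\mathcal L^N(\phi^2)-2\phi\,\mathcal L^N\phi=\sum_{\text{jumps}}(\text{rate})\,(\text{increment of }\phi)^2$ applies with $\phi=\langle G,\mathscr X^{(k)N}\rangle$ regarded as a function of the configuration. Writing $\eta^{x,y}$ for the configuration obtained from $\eta$ by the jump $x\to y$ (as in \eqref{eq:generator_infinite}), this gives
\[
\varGamma^{(k)N}_t(G)=\sum_x\sum_y\1_{\{|x-y|=1\}}N^2\Big\{\eta^N_t(x)\big(\alpha+\sigma\eta^N_t(y)\big)\big(\langle G,\mathscr X^{(k)N}(\eta^{x,y})\rangle-\langle G,\mathscr X^{(k)N}(\eta^N_t)\rangle\big)^2+(x\leftrightarrow y)\Big\},
\]
which is manifestly nonnegative, as already noted before the statement. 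By the $x\leftrightarrow y$ symmetry of the double sum it suffices to control the first family of terms.

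The key step is to compute the single-jump increment. Using $G=G^\sy$ (recall $\langle G,\mathscr X^{(k)N}\rangle=\langle G^\sy,\mathscr X^{(k)N}\rangle$) and the labeled-particle representation \eqref{eq:fields_2ndversion}, moving one particle from $x$ to $y$ alters only those $k$-tuples that contain the moved particle, and each such term changes in exactly one of its coordinates. Letting $\eta^{-x}$ denote the configuration $\eta$ with one particle deleted at $x$, I would establish the \emph{exact} identity
\[
\langle G,\mathscr X^{(k)N}(\eta^{x,y})\rangle-\langle G,\mathscr X^{(k)N}(\eta)\rangle=\frac{k}{N^{kd}}\sum_{\bz\in(\Z^d)^{k-1}}\Big(G^\sy(\tfrac{\bz}{N},\tfrac yN)-G^\sy(\tfrac{\bz}{N},\tfrac xN)\Big)\,[\eta^{-x}]_\bz\ .
\]
The essential feature is that only a \emph{single} coordinate of $G^\sy$ is differenced: the bracket equals $\tfrac1N\nabla^{(1)N}$ acting on the last slot of $G^\sy$ (in the sense of \eqref{eq:double_gradient}) and is therefore of size $O(1/N)$ by smoothness of $G^\sy$. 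This factor $1/N$, once squared, is exactly what will absorb the diffusive factor $N^2$ carried by the rate; this is the mechanism making $N^d\varGamma^{(k)N}_t(G)$ bounded.

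Next I would square the increment, multiply by the rate, and reduce the configuration-dependent factor to genuine factorial moments. Combining the rate factor with one copy of the increment via $\eta(x)[\eta^{-x}]_\bz=[\eta]_{(x,\bz)}$, bounding $[\eta^{-x}]_{\bz'}\le[\eta]_{\bz'}$ and $(\alpha+\sigma\eta(y))\le \alpha+\eta(y)$, and expanding the remaining products (e.g.\ $\eta(y)[\eta]_{\bz'}=[\eta]_{(y,\bz')}+n_y(\bz')[\eta]_{\bz'}$), the whole factor is dominated by a finite nonnegative combination, with $k$-dependent coefficients, of falling factorials $[\eta^N_t]_\bw$ with $|\bw|\le 2k+1$ and sites drawn from $\{x,y\}\cup\bz\cup\bz'$. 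Taking $\E^N_{\mu^N}$ and invoking \eqref{eq:assumption_bound_timet} bounds each such expectation by $\scale^{2k+1}\cpi^{(2k+1)}$, \emph{uniformly in $t\ge0$ and in the sites}. What remains is a pure power count: the prefactor becomes $N^{d}\cdot N^{-2kd}\cdot N^{2}$ (the $N^2$ cancels the squared gradient), i.e.\ $N^{-(2k-1)d}$, multiplying $\sum_x\sum_{y:|x-y|=1}\big(\sum_\bz|\nabla^{(1)N}G^\sy(\tfrac\bz N,\tfrac xN)|\big)^2$; Schwartz decay turns each $\tfrac1{N^{(k-1)d}}\sum_\bz$ and $\tfrac1{N^d}\sum_x$ into convergent Riemann sums, so this sum is $O(N^{(2k-1)d})$ and the product is $O(1)$, uniformly in $t$.

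The main obstacle is the increment identity itself: establishing that a single particle move produces only a one-coordinate discrete gradient of $G^\sy$ (hence the crucial $O(1/N)$ gain), together with the bookkeeping needed to rewrite the rate times the two \emph{shifted} factorial moments $[\eta^{-x}]_\bz[\eta^{-x}]_{\bz'}$ as a bounded combination of honest falling factorials, so that the uniform moment bound \eqref{eq:assumption_bound_timet} can be applied. Once these two algebraic points are settled, the remaining estimate is a routine Hölder/Riemann-sum argument of the same type already used for \eqref{eq:integral_terms1}--\eqref{eq:integral_terms2}.
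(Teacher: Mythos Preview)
Your approach is correct and genuinely different from the paper's. Rather than expressing the carr\'e du champ through the product-of-fields expansion of \cref{lemma:product_fields} and then invoking duality at the level of the $(2k-\ell)$th-order fields (so that $\mathcal L^N\langle\{G\otimes G\}^{(2k-\ell)},\mathscr X^{(2k-\ell)N}\rangle=\langle\mathscr A^{(2k-\ell)N}\{G\otimes G\}^{(2k-\ell)},\mathscr X^{(2k-\ell)N}\rangle$), you compute the single-jump increment directly from the labeled representation \eqref{eq:fields_2ndversion}. Your key identity for the increment is correct: moving one particle changes only the $k$-tuples that contain it, and by symmetry of $G^\sy$ this produces a single discrete gradient in one slot, giving the $O(1/N)$ gain needed to cancel the diffusive $N^2$. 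The subsequent reduction of $\eta(x)(\alpha+\sigma\eta(y))[\eta^{-x}]_\bz[\eta^{-x}]_{\bz'}$ to a finite positive combination of falling factorials $[\eta]_\bw$, $|\bw|\le 2k$, works as you outline (note $\eta(x)[\eta^{-x}]_\bw=[\eta]_{(x,\bw)}$ holds exactly, regardless of how many copies of $x$ lie in $\bw$), after which \eqref{eq:assumption_bound_timet} and the Riemann-sum count close the argument uniformly in $t$.

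What each route buys: your jump-by-jump computation is more elementary and self-contained---it neither requires the product expansion \cref{lemma:product_fields} nor the self-adjointness \eqref{eq:self-adjoint} at the $(2k-\ell)$-particle level. The paper's route, by contrast, organizes $\varGamma^{(k)N}_t(G)$ as in \eqref{eq:carre} into contributions from fields of order $2k-\ell$, isolating for $\ell=0$ precisely the cross-interaction pieces $\sigma\,\mathscr B^{(2k)N}_{i,j}(G\otimes G)$ (cf.\ \eqref{eq:carre00}); this structural decomposition is reused verbatim in \cref{section:proof_fluctuation_martingales} to identify the \emph{limit} $\mathscr U^{(k,\scale)}(G)$ of $N^d\varGamma^{(k)N}_t(G)$, not merely its boundedness. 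Your approach would require extra work to extract that limit cleanly, since the terms coming from the expansion of $(\alpha+\sigma\eta(y))$ and from the product $[\eta^{-x}]_\bz[\eta^{-x}]_{\bz'}$ are not yet sorted by order.
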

The proof of \cref{lemma:vanishing_martingale} 
 relies on the formula for  products of higher-order fields presented in  \cref{lemma:product_fields}.
As an illustration, let us start by computing $\varGamma^{(k)N}_t(G)$ in \eqref{eq:carre_du_champ} for first-order fields  ($k=1$) by using such formula.  Let us fix $G \in \mathscr S^{(1)}$ and $t \geq 0$. Then, for all $N \in \N$, we have 
\begin{align*}
\varGamma^{(1)N}_t(G)=&\ \mathcal L^N\left(\langle G,\mathscr X^{(1)N}_t\rangle \right)^2 - 2 \langle G,\mathscr X^{(1)N}_t\rangle\, \langle \mathscr A^{(1)N} G,\mathscr X^{(1)N}_t\rangle\\
=&\ \langle \mathscr A^{(2)N}\left(G\otimes G \right), \mathscr X^{(2)N}_t\rangle - \langle \mathscr  A^{(1)N} G \otimes G  + G \otimes \mathscr A^{(1)N}G, \mathscr X^{(2)N}_t\rangle\\
+&\ \frac{1}{N^d}\left(\langle \mathscr A^{(1)N}\left(G^2 \right),\mathscr X^{(1)N}_t\rangle - 2\langle  G (\mathscr A^{(1)N}G), \mathscr X^{(1)N}_t\rangle\right)\\
=&\	 \sigma \langle (\mathscr B_{1,2}^{(2)N}+\mathscr B_{2,1}^{(2)N})\left(G \otimes G \right), \mathscr X^{(2)N}_t\rangle + \frac{1}{N^d}\langle \mathscr A^{(1)N}\left( G^2\right) - 2 G (\mathscr A^{(1)N}G), \mathscr X^{(1)N}_t\rangle\ .
\end{align*}
Let us observe that, for all $x, y  \in \Z^d$, 
\begin{align*}
(\mathscr B_{1,2}^{(2)N}+\mathscr B_{2,1}^{(2)N})\left(G\otimes G \right)(\tfrac{x}{N},\tfrac{y}{N})=N^2\left(G(\tfrac{y}{N})-G(\tfrac{x}{N})) \right)^2 \1_{\{|x-y|=1\}} \geq  0
\end{align*}
and
\begin{align*}
\left(\mathscr A^{(1)N}\left( G^2\right) - 2 G \mathscr A^{(1)N}G \right)(\tfrac{x}{N})=\sum_y N^2 \alpha\left(G(\tfrac{y}{N})-G(\tfrac{x}{N}) \right)^2\1_{\{|y-x|=1\}}\geq 0\ ,
\end{align*}
from which, by \eqref{eq:assumption_bound_timet}, we obtain
\begin{align*}
&\ \E^N_{\mu^N}\left[\mathcal L^N\left(\langle G,\mathscr X^{(1)N}_t \rangle\right)^2 - 2 \langle G,\mathscr X^{(1)N}_t\rangle\, \langle \mathscr A^{(1)N} G,\mathscr X^{(1)N}\rangle \right]
\\
\leq&\ 
  \frac{\scale(1+|\sigma|\scale)}{N^d} \left(\frac{1}{N^d}\sum_{x} G(\tfrac{x}{N})\left(-\mathscr A^{(1)N}G(\tfrac{x}{N}) \right)\alpha \right)\ .
\end{align*}

Let us turn now to the general case, $k \geq 1$.
\begin{proofoflemma}{\ref{lemma:vanishing_martingale}}
	Let us apply  \cref{lemma:product_fields}  to obtain, by duality, 
	\begin{align}\label{eq:carre}
	\nonumber
	&\ 	\mathcal L^N\left(\langle G,\mathscr X^{(k)N}_t \rangle\right)^2  - 2\, \langle G,\mathscr X^{(k)N}_t\rangle\, \langle \mathscr A^{(k)N}G,\mathscr X^{(k)N}_t\rangle \\
	=&\ \sum_{\ell=0}^k \frac{1}{N^{\ell d}}\, \langle \mathscr A^{(2k-\ell)N}\{G\otimes G\}^{(2k-\ell)}- 2 \{\mathscr A^{(k)N}G \otimes G\}^{(2k-\ell)}, \mathscr X^{(2k-\ell)N}_t\rangle \ .
	\end{align}
	As for the term
	\begin{align}\label{eq:carre0}
	\nonumber
	&\ \langle \mathscr A^{(2k)N}\{G\otimes G\}^{(2k)}- 2 \{\mathscr A^{(k)N}G \otimes G\}^{(2k)}, \mathscr X^{(2k)N}_t\rangle\\
	=&\ \langle \mathscr A^{(2k)N}(G \otimes G)- \mathscr A^{(k)N}G \otimes G- G \otimes \mathscr A^{(k)N}G, \mathscr X^{(2k)N}_t\rangle 
	\end{align} corresponding to $\ell=0$ on the r.h.s.\ in \eqref{eq:carre}, 
	we have, for all $\bx, \by \in (\Z^d)^k$, 
	\begin{align*}
	&\ \mathscr A^{(2k)N}\left(G \otimes G \right)(\tfrac{\bx\:\by}{N}) - \mathscr A^{(k)N}G(\tfrac{\bx}{N})\, G(\tfrac{\by}{N}) - G(\tfrac{\bx}{N})\, \mathscr A^{(k)N}G(\tfrac{\by}{N})\\
	=&\ \sum_{i=1}^{2k} \mathscr A^{(2k)N}_i\left(G\otimes G \right)(\tfrac{\bx\:\by}{N}) + \sigma \sum_{i=1}^{2k}\sum_{j=1}^{2k} \mathscr B^{(2k)N}_{i,j} \left(G\otimes G \right)(\tfrac{\bx\:\by}{N})\\
	-&\ \sum_{i=1}^k \mathscr A^{(k)N}_i G(\tfrac{\bx}{N})\, G(\tfrac{\by}{N}) - \sum_{i=1}^{k} G(\tfrac{\bx}{N})\,	 \mathscr A^{(k)N}_iG(\tfrac{\by}{N})\\
	-&\ \sigma \sum_{i=1}^k\sum_{j=1}^k \mathscr B^{(k)N}_{i,j}G(\tfrac{\bx}{N})\, G(\tfrac{\by}{N})  - \sigma \sum_{i=1}^k\sum_{j=1}^k G(\tfrac{\bx}{N})\, \mathscr B^{(k)N}_{i,j}G(\tfrac{\by}{N})\\
	=&\ \sigma \sum_{i=1}^k \sum_{j=k+1}^{2k} \mathscr B^{(2k)N}_{i,j}\left(G\otimes G \right)(\tfrac{\bx\:\by}{N})+\sigma \sum_{i=k+1}^{2k}\sum_{j=1}^k\mathscr B^{(2k)N}_{i,j}\left(G\otimes G \right)(\tfrac{\bx\:\by}{N}) \ .
	\end{align*}
	Roughly speaking, of all the expression above only the interaction terms between particles of the $\bx$ group with those of the $\by$ group remains. Then, the  expression  
	\begin{align}\label{eq:carre00}\nonumber
	&\ \langle \mathscr A^{(2k)N}(G \otimes G)- \mathscr A^{(k)N}G \otimes G- G \otimes \mathscr A^{(k)N}G, \mathscr X^{(2k)N}_t\rangle\\
	=&\ \sigma \sum_{i=1}^k \sum_{j=k+1}^{2k} \langle \mathscr B^{(2k)N}_{i,j}(G\otimes G), \mathscr X^{(2k)N}_t\rangle+ \sigma \sum_{i=k+1}^{2k} \sum_{j=1}^k \langle \mathscr B^{(2k)N}_{i,j}(G\otimes G), \mathscr X^{(2k)N}_t\rangle
	\end{align}
	is either identically zero if $\sigma = 0$ or, if $\sigma \in \{-1,1\}$, satisfies
	\begin{align*}
	\limsup_{N\to \infty}\sup_{t \geq 0} N^d\,  \E^N_{\mu^N}\left[\left|	\langle \mathscr A^{(2k)N}(G \otimes G)- \mathscr A^{(k)N}G \otimes G- G \otimes \mathscr A^{(k)N}G, \mathscr X^{(2k)N}_t\rangle \right| \right]<\infty\ .
	\end{align*} 
	Indeed,   the arguments  in the proof of \eqref{eq:integral_terms2} (cf.\  \cref{section:proof_integral_term})   apply also to this case with $k$ replaced by $2k$, because they rely on assumption \ref{it:assumption_bound} of  \cref{theorem:hydrodynamics},  and the product form of the test function $G = \otimes_{i=1}^k\, g_i \in \mathscr S^{(k)}$ only.
	
	Let us show now that each of the remaining terms in \eqref{eq:carre} stays bounded in mean uniformly over time as $N\to \infty$, i.e.\
	\begin{equation}\label{eq:carrel}
	\limsup_{N\to \infty}\sup_{t \geq 0} 
	\E^N_{\mu^N}\left[\left|\langle \mathscr A^{(2k-\ell)N}\{G\otimes G\}^{(2k-\ell)}- 2 \{\mathscr A^{(k)N}G \otimes G\}^{(2k-\ell)}, \mathscr X^{(2k-\ell)N}_t\rangle \right|\right]<\infty
	\end{equation}
	for each $\ell \in [k]$; then, the additional factor $N^{-\ell d}$ will ensure \eqref{eq:vanishing_carre}. As a first step, we observe that
	\begin{align}\label{eq:carrel1}
	\nonumber
	&\ \limsup_{N\to \infty}\sup_{t \geq 0} \E^N_{\mu^N}\left[\left|\langle \mathscr A^{(2k-\ell)N}\{G\otimes G\}^{(2k-\ell)}, \mathscr X^{(2k-\ell)N}_t\rangle \right|\right]\\
	=&\ \limsup_{N\to \infty}\sup_{t \geq 0} \E^N_{\mu^N}\left[\left|\langle \mathscr A^{(2k-\ell)}\{G\otimes G\}^{(2k-\ell)}, \mathscr X^{(2k-\ell)N}_t\rangle \right|\right] < \infty\ ,	
	\end{align}
	because, as a consequence of \eqref{eq:convergence_integral_terms} which still holds with $2k-\ell$ in place of $k$ and $\{G\otimes G\}^{(2k-\ell)} \in \mathscr S^{(2k-\ell)}$ in place of $G\in\mathscr S^{(k)}$, we have
	\begin{equation}\label{eq:important}
	 \limsup_{N\to \infty} \sup_{t \geq 0} \E^N_{\mu^N}\left[\left(\langle \mathscr A^{(2k-\ell)N}\{G \otimes G\}^{(2k-\ell)}-\mathscr A^{(2k-\ell)}\{G\otimes G\}^{(2k-\ell)}, \mathscr X^{(2k-\ell)N}_t\rangle  \right)^2  \right]\underset{N\to \infty}\longrightarrow 0\ .
	\end{equation}
	The upper bound in \eqref{eq:carrel1} follows from \eqref{eq:assumption_bound_timet} and the fact that $\mathscr A^{(2k-\ell)}\{G\otimes G\}^{(2k-\ell)} \in \mathscr S^{(2k-\ell)}$ does not depend on $N \in \N$.
	We are left with the proof that
	\begin{equation}\label{eq:carrel2}
	\limsup_{N\to \infty}\sup_{t \geq 0} \E^N_{\mu^N}\left[\left|\langle \{\mathscr A^{(k)N}G\otimes G\}^{(2k-\ell)}, \mathscr X^{(2k-\ell)N}_t \rangle \right| \right]<\infty
	\end{equation}
	holds. To this aim, in view of the definitions in \cref{lemma:product_fields}, we write
	\begin{align*}
	&\ \langle \{\mathscr A^{(k)N}G\otimes G\}^{(2k-\ell)}, \mathscr X^{(2k-\ell)N}_t \rangle 
	= \frac{1}{N^{kd}}\sum_{\bx \in (\Z^d)^k}\mathscr A^{(k)N}G(\tfrac{\bx}{N}) \frac{1}{N^{kd-\ell d}}\sum_{\by \in (\Z^d)^k} G(\tfrac{\by}{N})\, \{\eta^N_t|(\bx,\by)\}^{(2k-\ell)} \ .
	\end{align*}
	Then, by exploiting the product structure and smoothness of $G \in \mathscr S^{(k)}$ and, thus, arguing as in  \cref{section:proof_integral_term}, we conclude that
	\begin{equation}\label{eq:important2}
	\sup_{t \geq 0}\E^N_{\mu^N}\left[\left(\langle \{\mathscr A^{(k)N}G \otimes G\}^{(2k-\ell)}-\{\mathscr A^{(k)}G \otimes G\}^{(2k-\ell)}, \mathscr X^{(2k-\ell)N}_t\rangle \right)^2 \right]\underset{N\to \infty}\longrightarrow 0\ .
	\end{equation}
	The observation that $\{\mathscr A^{(k)}G \otimes G\}^{(2k-\ell)} \in \mathscr S^{(2k-\ell)}$ is independent of $N \in \N$, combined with \eqref{eq:assumption_bound_timet}, yields \eqref{eq:carrel2}. This concludes the proof of the lemma.		
	\end{proofoflemma}

\section{Proof of \cref{theorem:fluctuations}}\label{section:proof_fluctuations}

The proof of \cref{theorem:fluctuations} employs most of the results developed for the proof of \cref{theorem:hydrodynamics}. In particular, Dynkin's formula implies that, for all $G \in \mathscr S^{(k)}$, $N\in \N$ and $t \geq 0$, 
\begin{align}\label{eq:dynkin_fluctuations}\nonumber
\langle G,  \mathscr Y^{(k,\scale)N}_t\rangle
=&\  \langle G, \mathscr Y^{(k,\scale)N}_0\rangle + \int_0^t \mathcal L^N \langle G, \mathscr Y^{(k,\scale)N}_s\rangle\, \dd s+\langle G, \mathscr M^{(k,\scale)N}_t\rangle 
\\=&\ \langle G, \mathscr Y^{(k,\scale)N}_0\rangle + \int_0^t \mathcal L^N \langle G, \mathscr Y^{(k,\scale)N}_s\rangle\, \dd s + N^{d/2} \langle G, \mathscr M^{(k)N}_t\rangle 
\end{align}
holds, with $\{\mathscr M^{(k)N}_t: t \geq 0\}$ being the same $(\mathscr S^{(k)})'$-valued martingale appearing in \eqref{eq:dynkin}. Because, by \eqref{eq:self-adjoint}, 	
\begin{equation}\label{eq:duality_fluctuations}
\mathcal	 L^N \E^N_{\mu_\scale}\left[\langle G,\mathscr X^{(k)N}_t\rangle  \right] = \E^N_{\mu_\scale}\left[\langle \mathscr A^{(k)N}G, \mathscr X^{(k)N}_t\rangle \right]=0\ ,
\end{equation}
duality again applies, yielding
\begin{align*}
\mathcal L^N \langle G, \mathscr Y^{(k,\scale)N}_t\rangle =&\  N^{d/2} \left(\langle \mathscr A^{(k)N}G, \mathscr X^{(k)N}_t\rangle - \E^N_{\mu_\scale}\left[\langle \mathscr A^{(k)N}G,\mathscr X^{(k)N}_t\rangle  \right] \right)= \langle \mathscr A^{(k)N}G, \mathscr Y^{(k,\scale)N}_t\rangle\ .
\end{align*} 
Therefore, by substituting the above expression into \eqref{eq:dynkin_fluctuations}, we obtain   linear stochastic  equations in $(\mathscr S^{(k)})'$  as governing the evolution of the fluctuation fields.

\subsection{One-time distribution}
As a first step, we characterize the limiting  one-time distributions of $\{\mathscr Y^{(k,\scale)N}_\cdot: N \in \N \}$.  For this purpose,  we first observe that, by definition \eqref{eq:fluctuation_fields2}, 
\begin{equation}\label{eq:centered_one-time_distribution}
\E^N_{\mu_\scale}\left[\langle G, \mathscr Y^{(k,\scale)N}_t\rangle \right]=0
\end{equation}
holds for all $N \in \N$, $t \geq 0$ and $G \in \mathscr S^{(k)}$. In order to characterize its limiting second moments, we need the following analogue of \cref{lemma:product_fields} for the products of expectations of higher-order fields. Recalling the definition of $\pi$ in \eqref{eq:pi_measure} and that
\begin{equation}
	E_{\mu_\scale}\left[[\eta]_\bx \right] = \scale^k\, \pi(\bx)\ ,\qquad \bx \in (\Z^d)^k\ ,
\end{equation} the proof is	 reminiscent to	 that of \cref{lemma:product_fields}; hence, we leave its details to the reader. We recall that $E_{\mu_\scale}$ refers to the expectation w.r.t.\ the measure $\mu_\scale$.
\begin{lemma}\label{lemma:product_expectations}
	Let $k, \ell \in \N$, $\ell \leq k$, and $G \in \mathscr S^{(k)}, H \in \mathscr S^{(\ell)}$. Then, for all $N \in \N$, $\sigma \in \{-1,0,1\}$ and $\scale \in \Scale$, we have
	\begin{align*}
E_{\mu_\scale}\left[\langle G, \mathscr X^{(k)N}\rangle \right] E_{\mu_\scale}\left[\langle H, \mathscr X^{(\ell)N}\rangle \right] = \sum_{h=0}^\ell \frac{\scale^h(-\sigma)^h}{N^{hd}}E_{\mu_\scale}\left[\langle \{G\otimes H\}^{(k+\ell-h)}, \mathscr X^{(k+\ell-h)N}\rangle \right]\ .
	\end{align*}
\end{lemma}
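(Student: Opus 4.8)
The plan is to reduce the claim, via the explicit reversible-measure formula $E_{\mu_\scale}[[\eta]_\bz]=\scale^{|\bz|}\pi(\bz)$, to a single \emph{pointwise} combinatorial identity for the weight $\pi$, and then to prove that identity by factorizing $\pi$ over lattice sites. First I would evaluate the two factors on the left-hand side: since $E_{\mu_\scale}[[\eta]_\bx]=\scale^k\pi(\bx)$ and $E_{\mu_\scale}[[\eta]_\by]=\scale^\ell\pi(\by)$, the left-hand side equals $\frac{\scale^{k+\ell}}{N^{(k+\ell)d}}\sum_{\bx,\by}G(\tfrac\bx N)H(\tfrac\by N)\,\pi(\bx)\pi(\by)$, absolute summability being guaranteed by \eqref{eq:upper_bound_pi} and the rapid decay of $G,H$. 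On the right-hand side I would insert the explicit form of $\{\eta|(\bx,\by)\}^{(k+\ell-h)}$ from the statement of \cref{lemma:product_fields}, take expectations term by term using $E_{\mu_\scale}[[\eta]_{\bx\:\widehat\by_{\mathcal J}}]=\scale^{k+\ell-h}\pi(\bx\:\widehat\by_{\mathcal J})$, and combine with \eqref{eq:summation_klh}. The prefactor of the $h$-term then becomes $\frac{\scale^h(-\sigma)^h}{N^{hd}}\cdot\frac{\scale^{k+\ell-h}}{N^{(k+\ell-h)d}}=\frac{\scale^{k+\ell}(-\sigma)^h}{N^{(k+\ell)d}}$, so every power of $\scale$ and $N$ except the common factor $\frac{\scale^{k+\ell}}{N^{(k+\ell)d}}$ cancels. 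Hence the lemma is equivalent to the pointwise identity, for all $\bx\in(\Z^d)^k$ and $\by\in(\Z^d)^\ell$,
\[
\pi(\bx)\pi(\by)=\sum_{h=0}^\ell(-\sigma)^h\sum_{\substack{\mathcal J\subseteq[\ell]\\|\mathcal J|=h}}\sum_{\substack{i:\mathcal J\to[k]\\\oto}}\Big(\prod_{j\in\mathcal J}\1_{\{y_j=x_{i_j}\}}\Big)\,\pi(\bx\:\widehat\by_{\mathcal J}).
\]

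To establish this identity I would exploit that $\pi$ depends only on the multiset of occupied sites: writing $m_z(\bz)$ for the number of coordinates of a tuple $\bz$ equal to $z\in\Z^d$, \eqref{eq:pi_measure} gives $\pi(\bz)=\prod_z[\alpha]^\sigma_{m_z(\bz)}$, where $[\alpha]^\sigma_m:=\prod_{s=0}^{m-1}(\alpha+s\sigma)$. The constraint $\prod_{j\in\mathcal J}\1_{\{y_j=x_{i_j}\}}$ together with injectivity of $i$ forces each matched $\by$-coordinate to be paired with a \emph{distinct} $\bx$-coordinate at the same site, which decouples the double sum over sites. Setting $a_z:=m_z(\bx)$, $b_z:=m_z(\by)$ and letting $t_z$ be the number of $\by$-coordinates at $z$ that get matched, the number of admissible local choices is $\binom{b_z}{t_z}a_z^{\underline{t_z}}$ (with $a_z^{\underline{t}}$ the falling factorial, automatically $0$ when $t>a_z$), while $\pi(\bx\:\widehat\by_{\mathcal J})=\prod_z[\alpha]^\sigma_{a_z+b_z-t_z}$ and $(-\sigma)^h=\prod_z(-\sigma)^{t_z}$. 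The right-hand side therefore factorizes into $\prod_z\big(\sum_{t}(-\sigma)^t\binom{b_z}{t}a_z^{\underline t}[\alpha]^\sigma_{a_z+b_z-t}\big)$, reducing the identity to the single-site claim $[\alpha]^\sigma_a[\alpha]^\sigma_b=\sum_{t=0}^{b}(-\sigma)^t\binom bt\,a^{\underline t}\,[\alpha]^\sigma_{a+b-t}$.

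Finally, I would deduce this single-site identity from the classical convolution for ordinary falling factorials $x^{\underline a}x^{\underline b}=\sum_t\binom bt\,a^{\underline t}\,x^{\underline{a+b-t}}$, a polynomial identity in $x$ (provable, e.g., by counting pairs of injections that share exactly $t$ common values, or by induction). For $\sigma=-1$ it is precisely the case $x=\alpha$, since $[\alpha]^{-1}_m=\alpha^{\underline m}$; for $\sigma=1$ it follows from $[\alpha]^1_m=(-1)^m(-\alpha)^{\underline m}$ after substituting $x=-\alpha$ and collecting signs; and for $\sigma=0$ only the $t=0$ term survives, giving $\alpha^a\alpha^b=\alpha^{a+b}$.

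I expect the main obstacle to lie in the second step—organizing the sum over the pairs $(\mathcal J,i)$ into a genuine product over sites and matching it against the generalized Vandermonde convolution—rather than in any analytic estimate. This telescoping structure is exactly parallel to the one driving the proof of \cref{lemma:product_fields} (where $\pi$ is replaced by the pathwise factorial moments $[\eta]_\bx$), which is presumably why the authors regard the details as routine and leave them to the reader.
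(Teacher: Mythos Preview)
Your argument is correct. The reduction to the pointwise identity
\[
\pi(\bx)\pi(\by)=\sum_{h=0}^\ell(-\sigma)^h\sum_{\substack{\mathcal J\subseteq[\ell]\\|\mathcal J|=h}}\sum_{\substack{i:\mathcal J\to[k]\\\text{one-to-one}}}\Big(\prod_{j\in\mathcal J}\1_{\{y_j=x_{i_j}\}}\Big)\,\pi(\bx\:\widehat\by_{\mathcal J})
\]
is clean, the site-factorization is justified (the indicator forces matched coordinates to share a site, and since the $\bx$-index sets at distinct sites are disjoint, the local injections patch to a global one and conversely), and your single-site convolution $x^{\underline a}x^{\underline b}=\sum_t\binom{b}{t}a^{\underline t}x^{\underline{a+b-t}}$ together with the sign substitutions handles all three values of $\sigma$.

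Your route, however, is not the one the paper has in mind. The authors say the proof is ``reminiscent of'' that of \cref{lemma:product_fields}, whose proof is an \emph{iterative peeling} argument: one writes $\eta(y_1)=(\eta(y_1)-\sum_{i_1}\1_{\{y_1=x_{i_1}\}})+\sum_{i_1}\1_{\{y_1=x_{i_1}\}}$, absorbs the first bracket into $[\eta]_{\bx:y_1}$, and iterates over $y_2,\ldots,y_\ell$. The analogue here is to peel off the factors of $\pi(\by)$ one at a time via $\alpha=(\alpha+\sigma\sum_{i_1}\1_{\{y_1=x_{i_1}\}})-\sigma\sum_{i_1}\1_{\{y_1=x_{i_1}\}}$, using that $\pi(\bx)\cdot(\alpha+\sigma\sum_{i_1}\1_{\{y_1=x_{i_1}\}})=\pi(\bx\:y_1)$; this produces exactly the same expansion with the coefficients $(-\sigma)^h$ emerging mechanically. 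The paper's approach has the virtue of being literally the same computation as \cref{lemma:product_fields} with $[\eta]_\bullet$ replaced by $\pi(\bullet)$, so no new combinatorial lemma is needed. Your approach, by contrast, isolates the combinatorial content as a neat closed-form identity (the generalized Vandermonde convolution), which makes the structure more transparent and, as you note, would equally well re-prove \cref{lemma:product_fields} itself.
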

In view of   \cref{lemma:product_fields}, \cref{lemma:product_expectations}, the definition of higher-order fluctuation fields and stationarity of the measure $\mu_\scale$, we obtain, for all $k \in \N$, $t \geq 0$ and $G, H \in \mathscr S^{(k)}$, 
\begin{align}\label{eq:limiting_one-time}\nonumber
&\ \lim_{N\to \infty}  \E^N_{\mu_\scale}\left[\langle G, \mathscr Y^{(k,\scale)N}_t\rangle \langle H, \mathscr Y^{(k,\scale)N}_t\rangle \right]\\
\nonumber
=&\ \lim_{N\to \infty} N^d\, \sum_{\ell=1}^k \frac{1-\scale^\ell(-\sigma)^\ell}{N^{\ell d}}\E^N_{\mu_\scale}\left[\langle \{G\otimes H\}^{(2k-\ell)}, \mathscr X^{(2k-\ell)N}_t\rangle \right]\\
=&\ \lim_{N\to \infty}  (1+\sigma\scale)\, \E^N_{\mu_\scale}\left[\langle \{G\otimes H\}^{(2k-1)}, \mathscr X^{(2k-1)N}_t\rangle \right]
=:	 \llangle G, H\rrangle^{(k)}_{\sigma, \scale}\ ,
\end{align}
which is a finite value independent of $t \geq 0$ and, specifically for product test functions $G=\otimes_{i=1}^k\, g_i \in \mathscr S^{(k)}$, $H=\otimes_{i=1}^k\, h_i\in \mathscr S^{(k)}$, reads as follows:
\begin{equation}\label{eq:variance_gaussian}
\llangle G, H\rrangle^{(k)}_{\sigma,\scale}=	  \sum_{i=1}^k \sum_{j=1}^k \left\{\int_{\R^d}g_i(u) h_j(u)\, \alpha \scale (1+\sigma \scale)\, \dd u \right\} \prod_{\substack{l=1\\l\neq i}}^k \left\{\int_{\R^d} g_l(u)\, \alpha\scale\, \dd u  \right\} \prod_{\substack{l'=1\\l'\neq j}}^k\left\{\int_{\R^d} h_{l'}(u)\, \alpha\scale\, \dd u  \right\}\ .
\end{equation}
We note that, by \eqref{eq:limiting_one-time}--\eqref{eq:variance_gaussian}, the random variables $\{\mathscr Y^{(k,\scale)N}_t: N \in \N\}\subseteq (\mathscr S^{(k)})'$ are tight and all limit points $\mathscr Y^{(k,\scale)}_t \in (\mathscr S^{(k)})$ are centered (cf.\ \eqref{eq:centered_one-time_distribution}) and satisfy, for all $G, H \in \mathscr S^{(k)}$, 
\begin{equation}\label{eq:variance_gaussian2}
\mathscr E^{(k,\scale)}\left[\langle G, \mathscr Y^{(k,\scale)}_t\rangle \langle H, \mathscr Y^{(k,\scale)}_t\rangle \right]= \llangle G, H \rrangle^{(k)}_{\sigma,\scale}\ .
\end{equation}
We conclude the characterization of the limiting one-distribution with the following result.
\begin{lemma}
	For all $k \in \N$ and $t \geq 0$, the sequence $\{\mathscr Y^{(k,\scale)N}_t: N \in \N\}\subseteq (\mathscr S^{(k)})'$ converges in distribution to the unique centered Gaussian distribution $\mathscr Y^{(k,\scale)}_t \in (\mathscr S^{(k)})'$ satisfying \eqref{eq:variance_gaussian2}.
	\end{lemma}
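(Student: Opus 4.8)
The plan is to prove convergence of the $(\mathscr S^{(k)})'$-valued random variable $\mathscr Y^{(k,\scale)N}_t$ by the method of cumulants, after reducing matters to a one-dimensional central limit theorem. Since tightness of $\{\mathscr Y^{(k,\scale)N}_t : N \in \N\}$ in $(\mathscr S^{(k)})'$ has already been recorded just after \eqref{eq:variance_gaussian2}, it suffices to identify every subsequential limit as the centered Gaussian with covariance \eqref{eq:variance_gaussian2}. For a random variable valued in the dual of a nuclear space, identifying the limit law reduces to convergence of all finite-dimensional projections $(\langle G_1, \mathscr Y^{(k,\scale)N}_t\rangle, \ldots, \langle G_m, \mathscr Y^{(k,\scale)N}_t\rangle)$; by linearity of the pairing and the Cram\'er--Wold device, this in turn reduces to the one-dimensional statement that, for each fixed $G \in \mathscr S^{(k)}$, the real random variable $\langle G, \mathscr Y^{(k,\scale)N}_t\rangle$ converges in distribution to a centered Gaussian with variance $\llangle G, G\rrangle^{(k)}_{\sigma,\scale}$. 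Finally, since $\mu_\scale$ is stationary, the law of $\eta^N_t$ is $\mu_\scale$ for every $t$ and the centering in \eqref{eq:fluctuation_fields2} is deterministic; hence the law of $\langle G, \mathscr Y^{(k,\scale)N}_t\rangle$ coincides with that of $N^{d/2}(\langle G, \mathscr X^{(k)N}\rangle - E_{\mu_\scale}[\langle G, \mathscr X^{(k)N}\rangle])$ under the product measure $\mu_\scale$, independently of $t$.

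To run the method of cumulants, I would write the $p$th cumulant of $\langle G, \mathscr X^{(k)N}\rangle$ as
\[
\kappa_p\big(\langle G, \mathscr X^{(k)N}\rangle\big) = \frac{1}{N^{pkd}} \sum_{\bx^{(1)}, \ldots, \bx^{(p)} \in (\Z^d)^k} \Big(\prod_{r=1}^p G(\tfrac{\bx^{(r)}}{N})\Big)\, \kappa\big([\eta]_{\bx^{(1)}}, \ldots, [\eta]_{\bx^{(p)}}\big),
\]
and exploit the product structure of $\mu_\scale$. Since $[\eta]_{\bx^{(r)}}$ depends on $\eta$ only through the occupation variables at the finite site-set $S_r := \{x^{(r)}_1, \ldots, x^{(r)}_k\}$, and these occupation variables are independent across distinct sites, the joint cumulant $\kappa([\eta]_{\bx^{(1)}}, \ldots, [\eta]_{\bx^{(p)}})$ vanishes whenever $\{1, \ldots, p\}$ splits into two nonempty groups with disjoint site-sets. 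Equivalently, the joint cumulant is nonzero only if the graph on $\{1, \ldots, p\}$ joining $r$ and $r'$ precisely when $S_r \cap S_{r'} \neq \emptyset$ is connected.

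This connectivity constraint drives the power counting. A connected graph on $p$ vertices has at least $p-1$ edges, so at least $p-1$ coincidences of the form $x^{(r)}_i = x^{(r')}_{i'}$ with $r \neq r'$ must hold; each such coincidence removes one free $\Z^d$-summation variable, while every free variable contributes a factor of order $N^d$ because $G$ is rapidly decreasing. Bounding the joint cumulants uniformly in $N$ and in the sites---possible since all single-site marginals belong to a fixed family with finite moments of every order---yields $\kappa_p(\langle G, \mathscr X^{(k)N}\rangle) = O(N^{-(p-1)d})$. Consequently,
\[
\kappa_p\big(\langle G, \mathscr Y^{(k,\scale)N}_t\rangle\big) = N^{pd/2}\, \kappa_p\big(\langle G, \mathscr X^{(k)N}\rangle\big) = O\big(N^{-(p-2)d/2}\big),
\]
which tends to $0$ for every $p \geq 3$, while $\kappa_1 = 0$ by centering and $\kappa_2 \to \llangle G, G\rrangle^{(k)}_{\sigma,\scale}$ has already been computed in \eqref{eq:limiting_one-time}--\eqref{eq:variance_gaussian}. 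Thus all cumulants converge to those of the centered Gaussian law with variance $\llangle G, G\rrangle^{(k)}_{\sigma,\scale}$; since the Gaussian is determined by its moments, convergence of cumulants (equivalently, of moments) implies convergence in distribution, which both completes the argument and gives uniqueness of the limit.

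I expect the main obstacle to be the rigorous bookkeeping behind the power counting: carefully defining the site-sets $S_r$ (including coincidences internal to a single tuple $\bx^{(r)}$, which interact with the measure $\pi$ and the definition \eqref{eq:x-factorial_moment} of $[\eta]_\bx$), verifying that the joint cumulants of the $[\eta]_{\bx^{(r)}}$ are bounded uniformly over sites and $N$, and confirming that the number of surviving free summation indices is at most $pk - (p-1)$ on every connected configuration. A secondary, more routine point is the upgrade from one-dimensional cumulant convergence to convergence of the $(\mathscr S^{(k)})'$-valued law, which combines the Cram\'er--Wold device with the tightness already in hand.
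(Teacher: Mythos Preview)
Your proposal is correct and takes a genuinely different route from the paper.

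The paper's proof proceeds by an explicit algebraic reduction to the first-order case: for product test functions $G=\otimes_{i=1}^k g_i$ it exhibits a decomposition of the form $\langle G, \mathscr Y^{(k,\scale)N}\rangle = \langle \phi^N(G), \mathscr Y^{(1,\scale)N}\rangle + \varUpsilon^{(k,\scale)N}(G)$, with $\phi^N(G) \in \mathscr S^{(1)}$ deterministic and an error $\varUpsilon^{(k,\scale)N}(G)$ vanishing in $L^2(\mu_\scale)$; the conclusion then follows from the standard asymptotic Gaussianity of the first-order fluctuation field. Your approach bypasses this inductive reduction and attacks the cumulants of $\langle G, \mathscr X^{(k)N}\rangle$ directly, using the product structure of $\mu_\scale$ to force connectivity of the coincidence graph on the $p$ tuples, and then a spanning-tree counting argument to bound the number of free summation indices by $pk-(p-1)$. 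The paper's route is more constructive---it actually identifies the $k$th-order fluctuation as a first-order fluctuation up to a negligible error, which is informative beyond the bare CLT---whereas your cumulant argument is self-contained, handles arbitrary $G\in\mathscr S^{(k)}$ without first restricting to product form, and would transfer with minimal change to any stationary product measure with finite moments of all orders. The bookkeeping you flag (uniform cumulant bounds and the free-variable count) is genuine but routine: the key point, that adding each new vertex in a connected ordering contributes at most $k-1$ new sites, is exactly what gives $pk-(p-1)$ free indices.
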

\begin{proof} By stationarity of $\mu_\scale$, we can neglect the time variable.
	 By the product form of the reversible measure $\mu_\scale$, the case $k=1$ is standard (see e.g.\ \cite[Lemma 11.2.1]{kipnis_scaling_1999}). Let us prove the claim for $k \in \N$, $k \geq 2$. For notational convenience, let us consider only product test functions in $\mathscr S^{(k)}$; as we will see, this is not a restriction as the argument we employ applies to all test functions in $\mathscr S^{(k)}$.	Hence, let $G = \otimes_{i=1}^k\, g_i \in \mathscr S^{(k)}$. Then, we get	
	\begin{align}\nonumber
	\label{eq:expansion1}
	\langle G, \mathscr Y^{(k,\scale)N}\rangle=&\ \left\{\langle \widehat G_{\{k\}}, \mathscr X^{(k-1)N}\rangle \langle g_k, \mathscr Y^{(1,\scale)N}\rangle + \langle \widehat G_{\{k\}}, \mathscr Y^{(k-1,\scale)N}\rangle E_{\mu_\scale}\left[\langle g_k, \mathscr X^{(1)N}\rangle \right]\right\}\\
	-&\   \sum_{i=1}^{k-1} \left\{ \varPhi^{(k,\scale)N}_{i,k}(G) + \varPsi^{(k,\scale)N}_{i,k}(G)\right\}\ ,
	\end{align} 
	where, for all $\ell \in [k]$ and distinct  $\{i_1,\ldots, i_\ell\}\subset [k]$, 
	\begin{align*}
	\widehat G_{\{i_1,\ldots, i_\ell\}}:= g_1\otimes\cdots \otimes g_{i_1-1}\otimes g_{i_1+1}\otimes \cdots g_{i_\ell-1}\otimes g_{i_\ell+1}\otimes g_k \in \mathscr S^{(k-\ell)}\ ,
	\end{align*}
	and
	\begin{align*}
	\varPhi^{(k,\scale)N}_{i,k}(G):=&\ \frac{1}{N^{d/2}} \left\{ \frac{1}{N^{kd-d}}\sum_{\bx\in (\Z^d)^{k-1}} \widehat G_{\{k\}}(\tfrac{\bx}{N})\, 	g_k(\tfrac{x_i}{N})\,  [\eta]_{\bx}\right\}\\
	\varPsi^{(k,\scale)N}_{i,k}(G):=&\ \frac{\sigma \scale}{N^{d/2}} \left\{ \frac{1}{N^{kd-d}}\sum_{\bx\in (\Z^d)^{k-1}} \widehat G_{\{k\}}(\tfrac{\bx}{N})\, 	g_k(\tfrac{x_i}{N})\,  \scale^{k-1}\pi(\bx)\right\}\ .	
	\end{align*}
	In particular,  we have, for all $i \in [k-1]$,
	\begin{equation}\label{eq:limit1}
	\lim_{N\to \infty}E_{\mu_\scale}\left[\left(\varPhi^{(k,\scale)N}_{i,k}(G) \right)^2 \right]= \lim_{N\to \infty}E_{\mu_\scale}\left[\left(\varPsi^{(k,\scale)N}_{i,k}(G) \right)^2 \right]=0\ .
	\end{equation}
	as well as
	\begin{align}\label{eq:limit2}
	E_{\mu_\scale}\left[\left(\langle \widehat G_{\{k\}}, \mathscr X^{(k-1)N}\rangle-E_{\mu_\scale}\left[\langle \widehat G_{\{k\}}, \mathscr X^{(k-1)N}\rangle \right] \right)^2 \right]\underset{N\to \infty}\longrightarrow 0\ .
	\end{align}
	By iterating the above argument, we obtain
	\begin{align*}
	\langle G, \mathscr Y^{(k,\scale)N}\rangle=&\ \langle \phi^N(G), \mathscr Y^{(1,\scale)N}\rangle +\varUpsilon^{(k,\scale)N}(G)\ , 	\end{align*}
	with $\phi^N(G) \in \mathscr S^{(1)}$ deterministic and an error term $\varUpsilon^{(k,\scale)N}(G)$ such that	
	\begin{align*}
	\lim_{N\to \infty}E_{\mu_\scale}\left[\left(\varUpsilon^{(k,\scale)N}(G) \right)^2 \right]=0\ .
	\end{align*} 
	The asymptotic Gaussianity of $\mathscr Y^{(1,\scale)N}$ and \eqref{eq:limiting_one-time}--\eqref{eq:variance_gaussian} conclude the proof of the lemma.
\end{proof}

\subsection{Tightness}
As a second step, we show tightness for the sequences	 $\{\mathscr Y^{(k)N}_\cdot|_{[0,T]}: N \in \N\} \subseteq \mathcal D([0,T],(\mathscr S^{(k)})')$, for all $T > 0$.  In view of Mitoma's tightness criterion (\cite[Theorem 4.1]{mitoma_tightness_1983}), it suffices to prove tightness for the sequences $\{\langle G, \mathscr Y^{(k,\scale)N}_\cdot\rangle|_{[0,T]}: N \in \N\} \subseteq \mathcal D([0,T],\R)$, for all $G \in \mathscr S^{(k)}$ and $T > 0$. In view of the decomposition in \eqref{eq:dynkin_fluctuations}, duality \eqref{eq:duality_fluctuations}, the limit in \eqref{eq:limiting_one-time} and stationarity of the measure $\mu_\scale$, we prove, for all $t \geq 0$ and $G \in \mathscr S^{(k)}$ in product form, 
\begin{equation}\label{eq:tightness_fluctuation_1}
\E^N_{\mu_\scale}\left[\left(\langle \mathscr A^{(k)N}G-\mathscr A^{(k)}G, \mathscr Y^{(k,\scale)N}_t\rangle \right)^2 \right]\underset{N\to \infty}\longrightarrow 0
\end{equation}
and
\begin{equation}\label{eq:tightness_fluctuation_2}
\limsup_{N\to \infty}\, N^d\, \E^N_{\mu_\scale}\left[\varGamma^{(k)N}_t(G)  \right]<\infty\ ,	
\end{equation} 
where the \emph{carr\'{e} du champ} $\varGamma^{(k)N}_t(G)$ has been given in \eqref{eq:carre_du_champ}. This ensures tightness for the sequences $\{\langle G, \mathscr Y^{(k,\scale)N}_\cdot|_{[0,T]}\rangle: N \in \N\}$.

\begin{proof}[Proof of \eqref{eq:tightness_fluctuation_1} and \eqref{eq:tightness_fluctuation_2}]
Let us assume without loss of generality that $G \in \mathscr S^{(k)}$ is in product form.
We note that \eqref{eq:tightness_fluctuation_2}  has already been proven in \cref{lemma:vanishing_martingale}.
As for \eqref{eq:tightness_fluctuation_1}, we have	
\begin{align*}
&\ \E^N_{\mu_\scale}\left[\left(\langle \mathscr A^{(k)N}G-\mathscr A^{(k)}G, \mathscr Y^{(k,\scale)N}_t\rangle \right)^2 \right]\\
=&\ 
 N^d \left(\E^N_{\mu_\scale}\left[\left(\langle \mathscr A^{(k)N}G-\mathscr A^{(k)}G, \mathscr X^{(k)N}_t\rangle \right)^2 \right]-\left(\E^N_{\mu_\scale}\left[\langle \mathscr A^{(k)N}G-\mathscr A^{(k)}G, \mathscr X^{(k)N}_t\rangle \right] \right)^2 \right)\\
 =&\ \sum_{\ell=1}^k \frac{1-(-\sigma)^\ell \scale^\ell}{N^{\ell d - d}} \E^N_{\mu_\scale}\left[\langle \{(\mathscr A^{(k)N}G-\mathscr A^{(k)}G)\otimes (\mathscr A^{(k)N}G-\mathscr A^{(k)}G) \}^{(2k-\ell)}, \mathscr X^{(2k-\ell)N}_t\rangle \right]\ ,
\end{align*}
which, by \eqref{eq:vanishing_difference_drift_l}, yields \eqref{eq:tightness_fluctuation_1}.
Now we can apply an argument analogous to the one presented in \cref{appendix:test_functions}  to extend tightness to all  $G\in \mathscr S^{(k)}$. 
\end{proof}

Once the tightness for the sequence $\{\mathscr Y^{(k,\scale)N}_\cdot: N \in \N\}$ is established, we observe that, for all limit points $\{\mathscr Y^{(k,\scale)}_t: t \geq 0\} \in \mathcal D([0,\infty),(\mathscr S^{(k)})')$ and for all $G \in \mathscr S^{(k)}$, we obtain
\begin{align*}
\sup_{t \geq 0}|\langle G, \mathscr Y^{(k,\scale)}_t\rangle - \langle G, \mathscr Y^{(k,\scale)}_{t^-}\rangle|=0\ ,\qquad \text{a.s.}\ ,
\end{align*} 
as a consequence of
\begin{align}\label{eq:vanishing_jumps}
\E^N_{\mu_\scale}\left[\sup_{t \geq 0}|\langle G, \mathscr Y^{(k,\scale)N}_t\rangle-\langle G, \mathscr Y^{(k,\scale)N}_{t^-}\rangle| \right]\underset{N\to \infty}\longrightarrow 0\ ,
\end{align}
ensuring that all limit points are fully supported on $\mathcal C([0,\infty),(\mathscr S^{(k)})')$.

\subsection{Martingales}\label{section:proof_fluctuation_martingales}
 We note that the sequence 
 \begin{equation*}\{\mathscr M^{(k,\scale)N}_\cdot: N \in \N\} \subseteq 	\mathcal D([0,\infty),(\mathscr S^{(k)})')
 \end{equation*} as given in \eqref{eq:dynkin_fluctuations} is tight.
 Moreover,  because limits of uniformly  integrable martingales are martingales (see e.g.\ \cite[Proposition 4.6]{goncalves_universality_2010}) and because of  \eqref{eq:tightness_fluctuation_1},   for any limit point $\{\mathscr Y^{(k,\scale)}_t: t \geq 0\}$, the process 
\begin{equation}\label{eq:martingale_limit}
\{\mathscr M^{(k,\scale)}_t: t \geq 0\} \in \mathscr C([0,\infty),(\mathscr S^{(k)})')
\end{equation} given,  for all $t \geq 0$ and for 	 all $G \in \mathscr S^{(k)}$ in product form, by
\begin{align*}
\langle G, \mathscr M^{(k,\scale)}_t\rangle:= \langle G, \mathscr Y^{(k,\scale)}_t\rangle-\langle G, \mathscr Y^{(k,\scale)}_0\rangle - \int_0^t \langle \mathscr A^{(k)}G, \mathscr Y^{(k,\scale)}_s\rangle\, \dd s\ ,
\end{align*}
is a $(\mathscr S^{(k)})'$-valued martingale w.r.t.\ the natural filtration induced by $\{\mathscr Y^{(k,\scale)}_t: t \geq 0\}$. Then, let us characterize the predictable quadratic variations of such martingales, i.e.\ find, for all $G \in \mathscr S^{(k)}$,  a (predictable)  stochastic process $\{\mathscr V^{(k,\scale)}_t(G) : t \geq 0\} \in \mathcal C([0,\infty),\R)$ such that
\begin{equation}\label{eq:martingaleN}
 \mathscr N^{(k,\scale)}_t(G):=\left(\langle G, \mathscr M^{(k,\scale)}_t\rangle\right)^2-  \mathscr V^{(k,\scale)}_t(G)\ ,\quad t \geq 0\ , 
\end{equation}
is a martingale w.r.t.\ the law and natural filtration of $\{\mathscr Y^{(k,\scale)}_t: t \geq 0\}$. For this purpose, we prove the following lemma.

\begin{lemma} For all $\sigma \in \{-1,0,1\}$, $\scale \in \Scale$, $k \in \N$ and $G \in \mathscr S^{(k)}$, 
	\begin{equation}\label{eq:family_martingalesN}
	\{\mathscr N^{(k,\scale)N}_\cdot(G): N \in \N\}\subseteq \mathcal D([0,\infty),\R)
	\end{equation}
	given, for all $N \in \N$ and $t \geq 0$, by
	\begin{align*}
	\mathscr N^{(k,\scale)N}_t(G):=&\ \left( \langle G, \mathscr M^{(k,\scale)N}_t\rangle\right)^2 - \int_0^t N^d \varGamma^{(k)N}_s(G)\, \dd s\ ,	
	\end{align*}
	is a tight sequence of $\Pr^N_{\mu_\scale}$-integrable martingales. Moreover, 	for all $t \geq 0$, we have
	\begin{align}\label{eq:convergence_carre}
	\E^N_{\mu_\scale}\left[\left(N^d \varGamma^{(k)N}_t(G)- \mathscr U^{(k,\scale)}(G) \right)^2\right]\underset{N\to \infty}\longrightarrow 0\ ,
	\end{align}
	where $\mathscr U^{(k,\scale)}(G)$ is deterministic and defined in \eqref{eq:UK}.	
\end{lemma}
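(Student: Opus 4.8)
The plan is to treat the three assertions separately, reserving the bulk of the effort for the $L^2$-convergence \eqref{eq:convergence_carre}, which is the only genuinely new computation; the martingale property and tightness then follow from standard considerations together with the estimates already obtained. For the martingale property, I would observe that, by \eqref{eq:dynkin_fluctuations}, $\langle G, \mathscr M^{(k,\scale)N}_t\rangle = N^{d/2}\langle G, \mathscr M^{(k)N}_t\rangle$, so that the predictable quadratic variation of $\langle G, \mathscr M^{(k,\scale)N}_\cdot\rangle$ equals $N^d$ times that of $\langle G, \mathscr M^{(k)N}_\cdot\rangle$, namely $\int_0^t N^d\varGamma^{(k)N}_s(G)\,\dd s$ with $\varGamma^{(k)N}$ the carr\'e du champ in \eqref{eq:carre_du_champ}. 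Hence $\mathscr N^{(k,\scale)N}_t(G)$ is, by Dynkin's formula, a local martingale; $\Pr^N_{\mu_\scale}$-integrability (and the genuine martingale property) is guaranteed by \eqref{eq:assumption_bound_timet} applied to the fields of order at most $2k$ entering $(\langle G, \mathscr M^{(k,\scale)N}_t\rangle)^2$ and its compensator, all of whose expectations are uniformly bounded. Tightness of \eqref{eq:family_martingalesN} follows from the already established tightness of $\{\mathscr M^{(k,\scale)N}_\cdot\}$ (whence of its square, by continuity of $x\mapsto x^2$ and uniform moment control) and of the compensator $t\mapsto \int_0^t N^d\varGamma^{(k)N}_s(G)\,\dd s$, the latter being a consequence of \eqref{eq:convergence_carre}, which forces the compensator to converge to the deterministic $t\,\mathscr U^{(k,\scale)}(G)$.

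For \eqref{eq:convergence_carre} I would first exploit stationarity of $\mu_\scale$ to drop the time variable, and write, as in \eqref{eq:dynkin_martingale}--\eqref{eq:carre}, $N^d\varGamma^{(k)N}_t(G) = \sum_{\ell=0}^k N^{-(\ell-1)d}\langle \mathscr A^{(2k-\ell)N}\{G\otimes G\}^{(2k-\ell)} - 2\{\mathscr A^{(k)N}G\otimes G\}^{(2k-\ell)}, \mathscr X^{(2k-\ell)N}_t\rangle$. The terms with $\ell\geq 2$ carry a vanishing prefactor $N^{-(\ell-1)d}$ and are controlled, uniformly in time, by the bounds of \cref{lemma:vanishing_martingale}, hence contribute $o(1)$ in $L^1$. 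The $\ell=0$ contribution to $N^d\varGamma^{(k)N}_t(G)$ reduces---by the cancellation of the non-interacting parts computed in \eqref{eq:carre00}---to $N^d$ times the cross-interaction sum $\sigma\sum_{i=1}^k\sum_{j=k+1}^{2k}\langle \mathscr B^{(2k)N}_{i,j}(G\otimes G),\mathscr X^{(2k)N}_t\rangle$ and the analogous sum with the two index-groups exchanged; applying \cref{proposition:permutationB} to rewrite each $\mathscr B$ as a discrete double gradient absorbs the factor $N^2$, and the resulting discrete Dirichlet-type form, after averaging with the identity $E_{\mu_\scale}[[\eta]_\bx]=\scale^{\,k}\pi(\bx)$ for $\bx\in(\Z^d)^k$, converges by a Riemann-sum argument to the $\sigma$-dependent part of $\mathscr U^{(k,\scale)}(G)$ in \eqref{eq:UK}--\eqref{eq:UKij}. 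The $\ell=1$ term, of order $N^0$, corresponds via \eqref{eq:definition_strange_etaxy} to the diagonal contraction in which one dual coordinate of the $\bx$-group is identified with one of the $\by$-group; a second application of \cref{proposition:permutationB} together with the computation \eqref{eq:bound_dirichlet_form} and the same moment identity produces exactly the $\sigma$-independent part of $\mathscr U^{(k,\scale)}(G)$. Summing the two surviving contributions reproduces $\mathscr U^{(k,\scale)}(G)$ and establishes convergence in mean; the first-order model worked out before \cref{lemma:vanishing_martingale} is the template for this bookkeeping.

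Finally, to upgrade convergence in mean to $L^2$ I would show that the variance of $N^d\varGamma^{(k)N}_t(G)$ vanishes. Writing out $(N^d\varGamma^{(k)N}_t(G))^2$ and expanding the product of fields through \cref{lemma:product_fields}, its expectation under the product measure $\mu_\scale$ is evaluated by \cref{lemma:product_expectations}; the leading, non-contracted contribution matches $(\E^N_{\mu_\scale}[N^d\varGamma^{(k)N}_t(G)])^2$ and cancels, while every contracted term---in which a site of one factor coincides with a site of the other---carries at least one surplus power of $N^{-d}$ relative to the mean, so that $\E^N_{\mu_\scale}[(N^d\varGamma^{(k)N}_t(G))^2]-(\E^N_{\mu_\scale}[N^d\varGamma^{(k)N}_t(G)])^2 = O(N^{-d})$ by \eqref{eq:assumption_bound_timet}. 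Since $\mathscr U^{(k,\scale)}(G)$ is deterministic, combining the vanishing variance with the mean convergence yields \eqref{eq:convergence_carre}.

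I expect the main obstacle to be precisely this variance estimate: one must carefully track the cancellation of the leading term against the square of the mean and verify that the large prefactor $N^d$ does not survive in any contracted term. This hinges on the exact cancellation of the non-interacting parts at order $\ell=0$ (already isolated in \eqref{eq:carre00}) and on the precise matching of the $\ell=1$ contraction with the diagonal part of $\mathscr U^{(k,\scale)}(G)$, so that the only $O(1)$ survivors are the intended Dirichlet forms, with all remaining fluctuations genuinely of lower order in $N$.
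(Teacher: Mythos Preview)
Your proposal is correct and follows the paper's decomposition \eqref{eq:carre} of $N^d\varGamma^{(k)N}_t(G)$, isolating the contributions $\ell=0$ and $\ell=1$ and discarding $\ell\geq 2$. The martingale and tightness parts match the paper verbatim. The only genuine methodological difference concerns how you reach $L^2$-convergence in \eqref{eq:convergence_carre}: you first establish convergence of the mean and then run a separate global variance estimate via \cref{lemma:product_fields} and \cref{lemma:product_expectations}; the paper instead shows $L^2$-convergence \emph{term by term}, recycling the $L^2$-estimates already proved in \cref{section:proof_integral_term}. Concretely, for $\ell=0$ the paper applies \cref{proposition:permutationB} exactly as you do and then argues directly that the resulting pairing converges in $L^2(\mu_\scale)$ to the $\sigma$-dependent part $\mathscr U^{(k,\scale)}_{0,\{i,j\}}(G)$; for $\ell=1$ it does \emph{not} invoke \cref{proposition:permutationB} a second time but rather uses \eqref{eq:important} and \eqref{eq:important2} to replace $\mathscr A^{(2k-1)N}$ and $\mathscr A^{(k)N}$ by their continuum limits in $L^2$, after which the integration-by-parts identity $\int \mathscr A^{(1)}(g_ig_j)-g_i\mathscr A^{(1)}g_j-g_j\mathscr A^{(1)}g_i\,\alpha\scale\,\dd u=\int\nabla g_i\cdot\nabla g_j\,\alpha^2\scale\,\dd u$ produces the $\sigma$-independent part. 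The paper's route is shorter because the needed $L^2$-bounds are already on the shelf, while your mean-plus-variance argument is self-contained and slightly more robust (it would work even without having \eqref{eq:important}--\eqref{eq:important2} stated as $L^2$ limits), at the cost of re-deriving what the earlier section already provides.
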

\begin{proof}
	The fact that, for all $N \in \N$, $\{\mathscr N^{(k,\scale)N}_t(G): t \geq 0\}$ is a $\Pr^N_{\mu_\scale}$-integrable martingale is a consequence of Dynkin's formula \eqref{eq:dynkin}. Tightness of the family \eqref{eq:family_martingalesN} follows from tightness of $\{\langle G, \mathscr M^{(k,\scale)N}_\cdot\rangle: N \in \N\}\subseteq \mathcal D([0,\infty),\R)$ (and, hence, $\{(\langle G, \mathscr M^{(k,\scale)N}_\cdot\rangle)^2: N \in \N \}$) and the convergence in \eqref{eq:convergence_carre}. 
	
	Now, let us turn our attention to the proof of \eqref{eq:convergence_carre}. As usual, we prove \eqref{eq:convergence_carre} for product test functions $G=\otimes_{i=1}^k\, g_i \in \mathscr S^{(k)}$ only, and refer to \cref{appendix:test_functions} for the argument which allows the extension of this result to all test functions in $\mathscr S^{(k)}$.

	By  \eqref{eq:carre}, we can decompose $N^d\varGamma^{(k)N}_t(G)$ as the sum of  $k+1$ terms, each of which is indexed by $\ell \in [k]_0$. By \eqref{eq:carrel}, by sending $N\to \infty$,   all terms corresponding to $\ell \in [k]_0 \setminus \{0,1\}$ become negligible; hence, let us consider the two terms corresponding to $\ell=0$ and $\ell =1$. By \eqref{eq:carre00}, \cref{proposition:permutationB}, the stationarity of $\mu_\scale$ and the product form of $G=\otimes_{i=1}^k\, g_i \in \mathscr S^{(k)}$, we have, for all $t \geq 0$ and $i, j \in [k]$,
	\begin{align*}
	\E^N_{\mu_\scale}\left[\left(\sigma \langle \mathscr B^{(2k)N}_{i,j}(G\otimes G)+\mathscr B^{(2k)N}_{j,i}(G\otimes G), \mathscr X^{(2k)N}_t\rangle -  \mathscr U^{(k,\scale)}_{0,\{i,j\}}(G) \right)^2 \right]\underset{N\to \infty}\longrightarrow 0
	\end{align*}
	where
	\begin{align*}
	\mathscr U^{(k,\scale)}_{0,\{i,j\}}(G):=  \left\{\int_{\R^d}  \nabla g_i(u)\cdot  \nabla g_j(u)\, \sigma\alpha^2\scale^2\, \dd u\right\}\prod_{\substack{l=1\\l\neq i}}^k \left\{\int_{\R^d} g_l(u)\, \alpha\scale\, \dd u  \right\} \prod_{\substack{l'=1\\l'\neq j}}^k\left\{\int_{\R^d} g_{l'}(u)\, \alpha\scale\, \dd u  \right\}\ .
	\end{align*}
	By \eqref{eq:important} and \eqref{eq:important2}, similar considerations for the term corresponding to $\ell=1$ and the following  identity 
	\begin{align*}
	\int_{\R^d}  \mathscr A^{(1)} (g_i g_j)(u)- g_i(u)(\mathscr A^{(1)}g_j)(u)-g_j(u)(\mathscr A^{(1)}g_i)(u)\, \alpha \scale\, \dd u
	= \int_{\R^d}  \nabla g_i(u)\cdot  \nabla g_j(u)\, \alpha^2\scale\, \dd u\ ,
	\end{align*} yield the final result.	
\end{proof}
In particular, the Cauchy-Schwarz inequality and \eqref{eq:convergence_carre} imply 
\begin{equation}
\E^N_{\mu_\scale}\left[\left(\int_0^t N^d \varGamma^{(k)N}_s(G)\, \dd s - t\, \mathscr U^{(k,\scale)}(G)\right)^2 \right]\underset{N\to \infty}\longrightarrow 0\ .
\end{equation}
Additionally, an argument as in \cite[p.\ 4171]{franco_phase_2013} ---which, in turn,  employs \cite[Lemma 3]{dittrich1991central} as well as \eqref{eq:vanishing_jumps}---ensures that, for all $G \in \mathscr S^{(k)}$, the martingales $\{\mathscr N^{(k,\scale)N}_t(G): t \geq 0\}$ are uniformly integrable and converge to the martingale in \eqref{eq:martingaleN} with
\begin{equation*}
\mathscr V^{(k,\scale)}_t(G)=t\, \mathscr U^{(k,\scale)}(G)
\end{equation*}
for all $t \geq 0$. This characterizes uniquely  the	distribution of the $(\mathscr S^{(k)})'$-valued martingale in \eqref{eq:martingale_limit}.

\subsection{Uniqueness of limit points}

Finally, since the limit points are fully supported on $\mathcal C([0,\infty),(\mathscr S^{(k)})')$, the convergence at time $t=0$ to the Gaussian random element $\mathscr Y^{(k,\scale)}_0 \in (\mathscr S^{(k)})'$, as well as  Holley-Stroock's theory of generalized Ornstein-Uhlenbeck processes (\cite{holley_generalized_1978}), ensures uniqueness of the limiting process as described in the statement of \cref{theorem:fluctuations}. This concludes the proof of \cref{theorem:fluctuations}.
\appendix

\section{Infinite particle systems}\label{appendix:infinite_particle_systems}
In this appendix, we present a duality-based construction of the infinite  interacting particle systems from  \cref{section:infinite_particle_systems} on the set $\mathcal X	$ of admissible particle configurations given in \eqref{eq:set_admissible}. Our exposition follows closely some of the ideas in
\cite[\S 6]{ayala_hydrodynamic_2016} and \cite[\S 2.2.4]{de_masi_mathematical_1991}. The reader may find alternative constructions for infinite  $\SEP$ ($\sigma = -1$) and $\IRW$ ($\sigma =0$) in the textbooks \cite[\S I.3]{liggett_interacting_2005-1} and \cite[\S 2.2.4]{de_masi_mathematical_1991}, respectively. To this aim, we let $\varLambda \subseteq \Z^d$ denote a finite subset of $\Z^d$, while $\varLambda \nearrow \Z^d$ refers to an increasing sequence of finite subsets  of $\Z^d$  eventually covering the entire $\Z^d$. Moreover, for all $\eta \in \N_0^{\Z^d}$ and $\varLambda \subseteq \Z^d$, we define $\eta^{(\varLambda)}$ as the finite configuration which agrees with $\eta$ in $\varLambda$, and contains no particles outside of $\varLambda$, i.e.\ 
\begin{equation*}
\eta^{(\varLambda)}(x):= \begin{cases}
\eta(x) &\text{if}\ x \in \varLambda\\
0 &\text{otherwise}\ .
\end{cases}
\end{equation*}

We wish to show that, for all $\eta \in \mathcal X \subseteq \N_0^{\Z^d}$ and $t \geq 0$, the following limit
\begin{equation}\label{eq:expectation_limit}
\mathcal E^N_t(\eta|\bx):=\lim_{\varLambda \nearrow \Z^d} \E^N_{\eta^{(\varLambda)}}\left[[\eta^{(\varLambda)N}_t]_\bx \right]
\end{equation}
exists for all $k \in \N$ and $\bx \in (\Z^d)^k$, where $\E^N_{\eta^{(\varLambda)}}$ stands for the expectation w.r.t.\ the law $\Pr^N_{\eta^{(\varLambda)}}$ of the finite particle system starting from $\eta^{(\varLambda)}$ and whose generator  $\mathcal L^N$ is given in \eqref{eq:generator_infinite}. 
Indeed, for each $\varLambda \subseteq \Z^d$,  the process $\{\eta^{N,(\varLambda)}_t: t \geq 0\}$ is a finite particle system in which the total number of particles is conserved by the dynamics. 
Therefore it is a well-defined Markov process. 
Moreover, it is self-dual (see e.g. \cite{carinci_dualities_2015}): according to the discussion at the end of \cref{section:duality}, we have  
\begin{align*}
\E^N_{\eta^{(\varLambda)}}\left[[\eta^{(\varLambda)N}_t]_\bx \right]= \E^N_{\eta^{(\varLambda)}}\left[\frac{[\eta^{(\varLambda)N}_t]_\bx}{\pi(\bx)} \right]\pi(\bx)= \widehat \E^N_\bx\left[\frac{[\eta^{(\varLambda)}]_{\mathbf X^{\bx,N}_t}}{\pi(\mathbf X^{\bx,N}_t)} \right]\pi(\bx) = \sum_{\by \in (\Z^d)^k} \pi(\bx)\, \widehat p^N_t(\bx,\by)\frac{[\eta^{(\varLambda)}]_\by}{\pi(\by)}\ .
\end{align*} Hence, by the definitions of $\eta^{(\varLambda)}$ and $\varLambda \nearrow \Z^d$,   the monotone convergence theorem yields
\begin{equation}\label{eq:finite_expectations}
0\leq \mathcal E^N_t(\eta|\bx)= \sum_{\by \in (\Z^d)^k} \pi(\bx)\, \widehat p^N_t(\bx,\by) \lim_{\varLambda \nearrow \Z^d} \frac{[\eta^{(\varLambda)}]_\by}{\pi(\by)}= \widehat \E^N_\bx\left[\frac{[\eta]_{\mathbf X^{\bx,N}_t}}{\pi(\mathbf X^{\bx,N}_t)} \right] \pi(\bx) <\infty\ ,
\end{equation}
where the last inequality is a consequence of the polynomial (at most) growth  at infinity of the configuration $\eta \in \mathcal X$, and the exponential upper  bound (e.g.\ \cite[Lemma 1.9]{stroock_markov_1997}) for the reversible random walks $\{\mathbf X^{\cdot,N}_t: t \geq 0 \}$ in $(\Z^d)^k$ with   nearest-neighboring jumps and uniformly elliptic rates\footnote{For the exclusion case ($\sigma = -1$), nearest-neighbor jumps for $\{\mathbf X^{\cdot,N}_t: t\geq 0\}$ in $(\Z^d)^k$ corresponding to collision of  particles shall  be replaced by exchange of location for the two nearest-neighboring particles.}. Along the same lines of \cite[pp.\ 13--14]{de_masi_mathematical_1991}, it follows that, for all $\eta \in \mathcal X$ and $t \geq 0$, the limits in \eqref{eq:expectation_limit} correspond to  expectations w.r.t.\ to a unique probability measure, say $p^N_t(\eta,\dd \eta')$, fully supported on $\mathcal X$.
The uniqueness comes from the unique characterization by  its joint factorial moments. Hence, for all functions $f : \mathcal X \to \R$    in the linear span of  bounded functions and those in the form $f(\eta)=[\eta]_\bx$ for some $\bx \in (\Z^d)^k$, we have
\begin{equation}\label{eq:finite_expectations2}
\lim_{\varLambda \nearrow \Z^d} \E^N_{\eta^{(\varLambda)}}\left[f(\eta^{(\varLambda)N}_t) \right] = \int_{\mathcal X} p^N_t(\eta,\dd \eta')\, f(\eta')\ .
\end{equation}
Then for all $\eta \in \mathcal X$,  $k \in \N$ and $\bx \in (\Z^d)^k$, we have
\begin{align*}
 \frac{\dd}{\dd t}\bigg|_{t=0}\int_{\mathcal X} p^N_t(\eta,\dd \eta')\, [\eta']_\bx= A^{(k)N}\left( \frac{[\eta']_\cdot}{\pi(\cdot)} \right)(\bx)\, \pi(\bx) = \mathcal L^N  [\eta]_\bx \ , 	
\end{align*}
where in the first equality we employed \eqref{eq:finite_expectations}--\eqref{eq:finite_expectations2} and the Markovianity of the finite particle system,  while in the second equality we used \eqref{eq:duality_relation}, an identity between two finite summations. 

In conclusion,  by taking the limit $\varLambda \nearrow \Z^d$ in the martingales
\begin{align*}
M^{(\varLambda)N}_t(\eta|\bx):= [\eta^{(\varLambda)N}_t]_\bx-[\eta^{(\varLambda)}]_\bx - \int_0^t \mathcal L^N [\eta^{(\varLambda)N}_s]_\bx\, \dd s
\end{align*}
and
\begin{align*}
\left(M^{(\varLambda)N}_t(\eta|\bx) \right)^2-\int_0^t\, \left\{ \mathcal L^N([\eta^{(\varLambda)N}_s]_\bx)^2 - 2\, [\eta^{(\varLambda)N}_s]_\bx\, \mathcal L^N[\eta^{(\varLambda)N}_s]_\bx\right\}\, \dd s
\end{align*}
and arguing as in \cref{section:proof_fluctuation_martingales}, we obtain the convergence of the uniformly integrable martingales above.
To wit, there exists a unique (by duality) law $\Pr^N_\eta$---with corresponding expectation $\E^N_\eta$---on the Skorokhod space $\mathcal D([0,\infty),\mathcal X)$ of $\mathcal X$-valued \emph{c\`{a}dl\`{a}g} trajectories  such that $\{M^N_t(\eta|\bx): t \geq 0\}$,  given, for all $t \geq 0$, by
\begin{equation*}
M^N_t(\eta|\bx):= [\eta^N_t]_\bx- [\eta]_\bx-\int_0^t \mathcal L^N[\eta^N_s]_\bx\, \dd s\ ,
\end{equation*}
is a martingale w.r.t.\  $\Pr^N_\eta$ such that, a.s.,  $M^N_0(\eta|\bx)=0$; similarly for
\begin{align*}
\left(M^N_t(\eta|\bx) \right)^2- \int_0^t\, \left\{\mathcal L^N([\eta^N_s]_\bx)^2 - 2\, [\eta^N_s]_\bx\, \mathcal L^N[\eta^N_s]_\bx\right\}\, \dd s\ .
\end{align*}

\section{Function spaces}\label{appendix:test_functions}

 	 In this appendix, we first review some basic facts about the function spaces $\mathscr S^{(k)}$ and $(\mathscr S^{(k)})'$, and then show that \eqref{eq:convergence_integral_terms} and \eqref{eq:vanishing_martingale} for product test functions yield \eqref{eq:convergence_probability} for all test functions.

 	  	 We start with some definitions. 
 	 For all $n \in \N_0^d$, we let  $h_n\in \mathscr S^{(1)}\subseteq L^2(\R^d)$ denote  the  $n$th Hermite function defined as, for instance, in \cite[Eq.\ (A.13)]{holley_generalized_1978}, and normalized such that $\| h_n\|_{L^2(\R^d)}=\sqrt{\llangle h_n, h_n\rrangle_{L^2(\R^d)}} =1$.	It is well-known that $\left\{h_n: n \in \N_0^d \right\}$ form an orthonormal basis for $L^2(\R^d)$, and they are the eigenfunctions of the self-adjoint  operator $\mathscr G^{(1)}$, given by $\mathscr G^{(1)}f(u):= -\Delta f(u) + \left(u_1^2+\cdots u_d^2 \right)f(u)$  for all smooth functions $f \in \mathscr S^{(1)}$. In particular, for all $n, m \in \N_0^d$,
 	 	\begin{align*}
 	 	\llangle h_n, \mathscr G^{(1)}\,h_m\rrangle_{L^2(\R^d)} = (2 |n|+d)\, \1_{\{n=m\}}\ ,
 	 	\end{align*}
 	 	where $|n|:= \sum_{\ell=1}^d n_i$.

 	 	 Generalizing this to a tensor product space, we define the $k$th-order Hermite functions $\left\{H_\bn: \bn \in (\N_0^d)^k \right\} \subset\mathscr S^{(k)} \subseteq L^2((\R^d)^k)$, 
 	 	given, for all $\bn=(n_1,\ldots, n_k) \in (\N_0^d)^k$, by
 	 	\begin{align*}
 	 	H_\bn= \otimes_{i=1}^k\, h_{n_i}\ .	
 	 	\end{align*}
 	 	They form an orthonormal basis for $L^2((\R^d)^k)$, and satisfy the following identity: for all $\bn, \bm \in (\N_0^d)^k$, 
 	 	\begin{align*}
 	 	\llangle H_\bn, \mathscr G^{(k)} H_\bm\rrangle_{L^2((\R^d)^k)} := \llangle H_\bn, \oplus_{i=1}^k\, \mathscr G^{(1)}_i H_\bm\rrangle_{L^2((\R^d)^k)} = (2|\bn| + k d)\, \1_{\{\bn=\bm\}}\ ,
 	 	\end{align*}
 	 	where, for $\bn = (n_1,\ldots, n_k) \in (\N_0^d)^k$,
 	 	\begin{equation}
 	 	| \bn| := \sum_{i=1}^k |n_i|\ .
 	 	\end{equation}
 	 If we endow on $\mathscr S^{(k)}$ with the $k$th-order Sobolev norm
 	 	\begin{align*}
 	 	\left\| \mathscr Z\right\|_p:=\left\| \mathscr Z\right\|_{p,(k)}:= \sqrt{\sum_{\bn \in (\N_0^d)^k} (2| \bn|+1)^p\, \left(\llangle H_\bn, \mathscr Z\rrangle_{L^2((\R^d)^k)}\right)^2}\ , \quad p \in \Z\ ,
 	 	\end{align*}
 	 	then 
 	 $\mathcal H^{(k)}_p := \closure{\mathscr S^{(k)}}^{\|\cdot\|_p}$,	
 	 	and for $p > 0$ we have the chain of inclusions
 	 	\begin{align*}
 	 	\mathscr S^{(k)} \subseteq  \ldots \subseteq  \mathcal H^{(k)}_p \ldots \subseteq \mathcal H_0^{(k)} = L^2((\R^d)^k) \subseteq \ldots \subseteq \mathcal H^{(k)}_{-p} \subseteq \ldots \subseteq (\mathscr S^{(k)})'\ .	
 	 	\end{align*}
 	 	In particular, there exists a constant $r = r(d,k)>0$ such that, for all $p > q + r$, the canonical embeddings
 	 $
 	 	\mathcal H^{(k)}_p \hookrightarrow \mathcal H^{(k)}_q
 	 $
 	 	are Hilbert-Schmidt.

\

Let us fix $k \in \N$, and recall the definition of the processes $\left\{\mathscr X^{(k)N}_t: t \geq 0\right\} \subseteq \mathcal D([0,\infty),(\mathscr S^{(k)})')$ from  \cref{section:hydrodynamics}. 
Assume for the moment that for all $T > 0$,  there exists $q \in \N$ such that
 	\begin{align}\label{eq:boundedness_hp_norm}
 	\lim_{M\to\infty} \limsup_{N\to \infty} \Pr^N_{\mu^N}\left(\sup_{t \in [0,T]}\| \mathscr X^{(k)N}_t \|_{-q}  > M\right) = 0\ .
 	\end{align}
 	Then note that, for any deterministic solution $\{\sscale_t: t \geq 0\} \in \mathcal C([0,\infty),(\mathscr S^{(k)})')$ of \eqref{eq:deterministic_heat_equation} and for all $T > 0$,  there exists $r \in \N$ such that
 	\begin{align}\label{eq:boundedeness_hp_norm_deterministic_solution} 	
 	\sup_{t \in [0,T]} \| \sscale_t\|_{-r} < \infty\ .
 	\end{align}
 As a consequence, by \eqref{eq:boundedness_hp_norm} and \eqref{eq:boundedeness_hp_norm_deterministic_solution}, the density in $\mathscr S^{(k)}$ of 
 \begin{equation}
 \mathscr S^{(k)}_\otimes:= \sp\left\{G \in \mathscr S^{(k)}: G=\otimes_{i=1}^k\, g_i \right\},
 \end{equation}
(see e.g.\ \cite[Eqs.\ (A.13)--(A.15)]{holley_generalized_1978})
 and \eqref{eq:convergence_integral_terms}--\eqref{eq:vanishing_martingale} (which, by linearity, hold for all functions in $\mathscr S^{(k)}_\otimes$ defined above), 	we obtain \eqref{eq:convergence_probability} for all test functions $G \in \mathscr S^{(k)}$. 
 
 Thus it remains to prove \eqref{eq:boundedness_hp_norm}.
 	 	In view of Dynkin's decomposition, \eqref{eq:boundedness_hp_norm} follows if 
 	 	\begin{align}\label{eq:bound1}
 	 	\limsup_{N\to \infty} \E^N_{\mu^N}\left[\| \mathscr X^{(k)N}_0\|^2_{-q}\right] < \infty
 	 	\end{align}
 	 	\begin{align}\label{eq:bound2}
 	 	\limsup_{N\to \infty} \E^N_{\mu^N}\left[\sup_{t\in [0,T]}\left\| \int_0^t \mathcal L^N \mathscr X^{(k)N}_s\, \dd s\right\|^2_{-q}\right] &< \infty
 	 	\end{align}
 	 	and
 	 	\begin{align}\label{eq:bound3}
 	 	\limsup_{N\to \infty} \E^N_{\mu^N}\left[\sup_{t\in [0,T]} \|\mathscr M^{(k)N}_t \|^2_{-q} \right] < \infty
 	 	\end{align}
 	 	hold.

 	 	To prove \eqref{eq:bound1}, we use the definition of $\|\cdot\|_{-q}$, the reverse Fatou's lemma, and assumption \ref{it:assumption_bound} in  \cref{theorem:hydrodynamics} to obtain
 	 	\begin{align*}
 	 	\limsup_{N\to \infty} \E^N_{\mu^N}\left[\| \mathscr X^{(k)N}_0\|^2_{-q}\right]\ &\leq\ \sum_{\bn \in (\N_0^d)^k} (2| \bn| +kd)^{-q} \limsup_{N\to \infty} \E^N_{\mu^N}\left[\left(\langle H_\bn, \mathscr X^{(k)N}_0 \rangle \right)^2 \right]\\
 	 	&\leq\ C \sum_{\bn \in (\N_0^d)^k} (2\| \bn\| +kd)^{-q}\,  \left(\| H_\bn\|_{L^1((\R^d)^k)} \right)^2\ ,	
 	 	\end{align*}
 	 	for some constant $C> 0$ independent of $N \in \N$.
 	 	Since the $L^1((\R^d)^k)$ norms of the Hermite functions are uniformly---in $\bn \in (\N_0^d)^k$---bounded by their corresponding $L^2((\R^d)^k)$ norms (see e.g.\ \cite[Theorem 2.1]{larsson-cohn2002}), i.e.\ there exists $c > 0$ such that
 	 	\begin{equation}\label{eq:boundL1norms_hermite}
 	 	\| H_\bn\|_{L^1((\R^d)^k)}\leq c \| H_\bn\|_{L^2((\R^d)^k)} =c\ , \quad \forall\, \bn \in (\N_0^d)^k\ ,
 	 	\end{equation}
 	 	we deduce that \eqref{eq:bound1} holds with $q > k d$. 

 	 	To prove \eqref{eq:bound2}, we proceed analogously and employ the limit statement
 	 	\begin{equation}
 	 	\E^N_{\mu^N}\left[\left(\langle \mathscr A^{(k)N}G-\mathscr A^{(k)}G,\mathscr X^{(k)N}_t \right)^2 \right]\underset{N\to \infty}\longrightarrow 0\ ,
 	 	\end{equation}  whose proof was given in \cref{section:proof_integral_term}.
 	 	Then we use	 the fact that $H_\bn \in \mathscr S^{(k)}_\otimes$ for all $\bn \in (\N_0^d)^k$ to obtain
 	 	\begin{align*}
 	 	&\limsup_{N\to \infty} \E^N_{\mu^N}\left[\sup_{t\in [0,T]}\left\| \int_0^t \mathcal L^N	 \mathscr X^{(k)N}_s\, \dd s\right\|^2_{-q}\right]\\
 	 	\leq&\ \sum_{\bn \in (\N_0^d)^k} (2|\bn| + kd)^{-q} T \limsup_{N\to \infty} \sup_{t \in [0,T]}\E^N_{\mu^N}\left[\left(\langle \mathscr A^{(k)N} H_\bn, \mathscr X^{(k)N}_t \right)^2 \right]\\
 	 	\leq&\ 2 T C\sum_{\bn \in (\N_0^d)^k} (2|\bn| + kd)^{-q} \left(\| \mathscr  A^{(k)} H_\bn\|_{L^1((\R^d)^k)} \right)^2
 	 	\end{align*}	
 	 	for some constant $C > 0$ independent of $N \in \N$.
 	 	By the triangle inequality, known relations for the Hermite functions (see e.g.\ \cite[Eq.\ (A.9)]{holley_generalized_1978}) and \eqref{eq:boundL1norms_hermite}, we have, for some  constant $C>0$ independent of $\bn \in (\N_0^d)^k$, 
 	 	\begin{align*}
 	 	\| \mathcal  A^{(k)} H_\bn\|_{L^1((\R^d)^k)} \leq \sum_{i=1}^k \|\tfrac{\alpha}{2}\Delta_i h_{n_i} \|_{L^1(\R^d)} \leq C\alpha\sum_{i=1}^k  |n_i|  = C		\alpha |\bn|\ .
 	 	\end{align*}
 	 	Hence, \eqref{eq:bound2} holds with $q > k d +2$.	

 	 	Last but not least, an analogous argument employing \cref{lemma:vanishing_martingale}---which, again, holds because the $k$th-order Hermite functions belong to $\mathscr S^{(k)}_\otimes$---yields \eqref{eq:bound3} with $q > kd$.
 		
 	\subsection*{Acknowledgments} 
 	F.S.\ would like to thank Mario Ayala and Frank Redig for useful discussions. 
 	J.P.C.\ acknowledges partial financial support from the US National Science Foundation (DMS-1855604). 
 	F.S.\  was financially supported
 	by the European Union's Horizon 2020 research and innovation programme under the Marie-Sk\l{}odowska-Curie grant
 	agreement No. 754411.
\bibliographystyle{acm}

 \end{document}